\newcommand{\zz}{{\Bbb Z}}
\newcommand{\zzz}{{\mathbf{Z}}}
\newcommand{\nn}{{\Bbb N}}
\newcommand{\rr}{{\Bbb R}}
\newcommand{\aaa}{{\Bbb A}}
\newcommand{\hh}{{\Bbb H}}
\newcommand{\ttt}{{\Bbb T}}
\newcommand{\ddim}{\operatorname{dim}}
\newcommand{\ddet}{\operatorname{det}}
\newcommand{\Homi}{\underline{\operatorname{Hom}}}
\newcommand{\Hom}{\operatorname{Hom}}
\newcommand{\op}[1]{\operatorname{#1}}
\newcommand{\kbar}{\overline{k}}
\newcommand{\ffi}{\varphi}
\newcommand{\la}{\langle}
\newcommand{\ra}{\rangle}
\newcommand{\lva}{\langle\!\langle}   
\newcommand{\rva}{\rangle\!\rangle}   
\newcommand{\row}{\rightarrow}
\newcommand{\low}{\leftarrow}
\newcommand{\lrow}{\longrightarrow}
\renewcommand{\leq}{\leqslant}
\renewcommand{\geq}{\geqslant}
\newcommand{\calm}{{\cal M}}
\newcommand{\hm}{\operatorname{H}_{\calm}}
\newcommand{\nichego}[1]{}
\newcommand{\un}[1]{\underline{#1}}
\newcommand{\wt}[1]{\widetilde{#1}}
\newcommand{\cn}{{\cal N}}
\newcommand{\cq}{{\cal Q}}
\newcommand{\Ch}{\operatorname{Ch}}
\newcommand{\dmk}{\op{DM}(k)}
\newcommand{\dmgmk}{\op{DM}_{gm}(k)}
\newcommand{\dmgmkD}{\op{DM}_{gm}(k;\zz/2)}
\newcommand{\dmgmkF}[1]{\op{DM}_{gm}(k;#1)}
\newcommand{\dmkD}{\op{DM}(k;\zz/2)}
\newcommand{\dmkF}[1]{\op{DM}(k;#1)}
\newcommand{\hii}{{\cal X}}
\newcommand{\whii}{\widetilde{{\cal X}}}
\newcommand{\DQMgm}{\op{DQM}^{gm}}
\newcommand{\Kbt}{K^b(Tate(\zz/2))}
\newcommand{\dmkQD}[1]{\op{DM}_{#1}(k;\zz/2)}
\newcommand{\dmkQF}[2]{\op{DM}_{#1}(k;#2)}
\newcommand{\dmELD}[2]{\op{DM}({#1}/{#2};\zz/2)}
\newcommand{\shk}{{\cal SH}(k)}
\newcommand{\picd}{\op{Pic}(\dmgmkD)}
\newcommand{\picq}{\op{Pic}_{qua}}
\newcommand{\chqD}{Chow_{qua}(k,\zz/2)}
\newcommand{\Qed}{\hfill$\square$\smallskip}
\newenvironment{proof}{\noindent{\it Proof}:}{\vskip 5mm}
\newtheorem{prop}{Proposition}[section]{\bf}{\it}
\newtheorem{thm}[prop]{Theorem}{\bf}{\it}
\newtheorem{lem}[prop]{Lemma}{\bf}{\it}
{\bf}{\it}
\newtheorem{defi}[prop]{Definition}{\bf}{\it}
{\bf}{\it}
{\bf}{\it}
\newtheorem{exa}[prop]{Example}{\bf}{\it}
\newtheorem{rem}[prop]{Remark}{\bf}{}
\newtheorem{que}[prop]{Question}{\bf}{\it}
\newtheorem{cor}[prop]{Corollary}{\bf}{\it}
{\bf}{\it}
\begin{document}

\title{Affine quadrics and the Picard group of the motivic category}
\author{Alexander Vishik\footnote{School of Mathematical Sciences, University
of Nottingham}}
\date{}

\maketitle

\begin{abstract}
In this article we study the subgroup of the Picard group of Voevodsky's category of geometric motives $\dmgmkD$
generated by the reduced motives of affine quadrics. Our main tools here are the functors of Bachmann - \cite{BQ},
but we also provide an alternative method. We show that the group in question can be described in terms of
indecomposable direct summands in the motives of projective quadrics over $k$.
In particular, we describe all the relations among the reduced motives of affine quadrics. We also extend the Criterion
of motivic equivalence of projective quadrics.
\end{abstract}

\section{Introduction}

The study of the Picard group of the motivic category in the algebro-gemetric context was initiated by Po Hu in \cite{Hu}, who considered
the case of the $\aaa^1$-stable homotopy category of Morel-Voevodsky. It was established there that the reduced classes of affine Pfister
quadrics $\{\lva a_1,\ldots,a_r\rva=b\}$ of small fold-ness represent invertible objects in $\shk$, and some relations among these
classes in $\op{Pic}(\shk)$ were found. It was conjectured that the same should hold for arbitrary $r$.

The topic was picked up by T.Bachmann in \cite{BQ}. Here, instead of the $\aaa^1$-stable homotopic category $\shk$, the Voevodsky's category
of motives $\dmkD$ was considered. This simplified the task somewhat. As a result, not only the conjectures of Po Hu were proven in this
context, but it was shown that the reduced motive $\widetilde{M}(A_q)$ of any affine quadric $A_q=\{q=1\}$ is invertible in $\dmkD$.
This was established with the
help of functors $\Phi^E$ of Bachmann. These tensor triangulated functors, defined for every finitely generated field extension
$E/k$, map the tensor triangulated category $\DQMgm$ generated by the motives of smooth projective quadrics to the category $\Kbt$
of bi-graded $\zz/2$-vector spaces. They are characterized by the property that $\Phi^E(T(i)[j])=T(i)[j]$ (the 1-dimensional vector
space of the specified bi-degree), where $T$ is the monoidal unit, while $\Phi^E(M(Q))=0$, for every projective quadric $Q/k$ which
stays anisotropic over $E$.
It was shown in \cite{BQ} that the collection of these functors (for all finitely generated $E/k$) is conservative,
detects invertible objects, and is injective on the $\op{Pic}$.

In \cite{BV} it was proven that the map: $q\mapsto\widetilde{M}(A_q)$ defines an embedding of sets:
$GW(k)\hookrightarrow\op{Pic}(\dmkD)$ of the Grothendieck-Witt ring of quadratic forms (or, by the result of F.Morel
\cite{morel2004motivic-pi0}, of $\pi_{(0)[0]}^s({\Bbb{S}})$) into the Picard group of the motivic category.

In the current paper, we study the relations among these elements in $\op{Pic}$.
Or, which is the same, the subgroup $\picq$ of $\op{Pic}(\dmkD)$ generated by the reduced motives of affine quadrics.
It appears that these can be described in terms of motives of projective quadrics and the direct sum operation.
Inside $\picq$ there is a subgroup $\ttt\cong\zz\oplus\zz$ consisting of Tate-motives $T(i)[j]$. It is enough to describe $\picq/\ttt$.
This group is generated by our (shifted) reduced motives $e^q:=\widetilde{M}(A_q)[1]$ of affine quadrics.
First of all, in Proposition \ref{inverse}, we complement the invertibility result of T.Bachmann by observing that our set of generators
is closed under inverses:
$(e^q)^{-1}=e^{q'}$ in $\picq/\ttt$, where $q'=\la 1\ra\perp -q$
(note, that the operation $q\mapsto q'$ is a "square root" of $q\mapsto q\perp\hh$).
In particular, this gives that $e^{\lva\alpha\rva}$ is the inverse of the reduced Rost motive $\widetilde{M}_{\alpha}$
(here $\alpha$ is a pure symbol in $K^M_*(k)/2$).
In Theorem \ref{Dedekind} we provide a large supply of linearly independent elements in $\picq$. Namely, the collection $\{e^{q_i}\}_i$
will be linearly independent as long as all the projective quadrics $Q'_i$ are anisotropic and pair-wise not stably bi-rationally equivalent.
Moreover, it is shown in Proposition \ref{conditional-sta} that if the Question \ref{old-question} has positive answer, then a maximal
such collection will form a $\zz$-basis of $\picq/\ttt$ (note, that the fact that this group is torsion free follows from \cite{BQ}).
Every smooth projective quadric $Q$ can be cut into affine ones (using some flag of plane sections).
Multiplying the respective elements $e^q$ we
get the new element $\ddet(Q)$. From some basic relations among $e^q$'s it follows that this does not depend on the choice of a flag, and
is an invariant of $Q$, and even of the motive of $Q$. The set $\{\ddet(Q)\}_Q$, where $Q$ runs over all smooth projective quadrics over $k$
provides another set of generators of $\picq$.
In Theorem \ref{relations} we establish all the relations among these elements in $\picq/\ttt$. Namely,
$\prod_i\ddet(P_i)=\prod_j\ddet(Q_j)\in\picq/\ttt$ if and only if
$\oplus_iM(P_i)$ and $\oplus_jM(Q_j)$ are {\it Tate-equivalent}, i.e. if we ignore the Tate-summands in both, then the respective
indecomposable (anisotropic) direct summands of both hand sides can be identified up to Tate-shift.
This embeds $\picq$ into the free abelian group with the basis consisting of indecomposable direct summands in the motives of $k$-quadrics
considered up to Tate-shift. What is remarkable here is that the question about Voevodsky's triangulated motives and the tensor product
operation is reduced to the one about classical Chow motives and the direct sum operation.
As a small by-product we can complement the classical Criterion of motivic equivalence of projective
quadrics (\cite{IMQ,lens}, see also \cite{Kar2}) with the equality $\ddet(P)=\ddet(Q)\in\picq$.
Thus, in $\picq$ we have two generating subsets: one identified with the isomorphism classes of quadratic forms, another with the
isomorphism classes of motives of projective quadrics.

All the above results are obtained with the help of the Bachmann's functors $\Phi^E$ which provide a very effective tool for comparing
elements of $\picq$. In Section \ref{alternative} we introduce an alternative method which permits to perform the same calculations.
Here we use the \v{C}ech simplicial schemes and some ideas from \cite{IMQ}. The idea is very simple: for a smooth projective $P$, the motive
$\hii_P$ of the \v{C}ech simplicial scheme is an idempotent: $\hii_P^{\otimes 2}\cong\hii_P$, and so is it's "complement"
$\whii_P=\op{Cone}(\hii_P\row T)$. As a result, we get two orthogonal projections $\otimes\hii_P$ and $\otimes\whii_P$ on $\dmk$ which
define a semi-orthogonal decomposition of this category. For different varieties, these projectors naturally commute, and we can
consider a poly-semi-orthogonal decomposition corresponding to a finite collection ${\mathbf X}=\{X_i\}_{i\in I}$
of smooth projective varieties. The resulting functor is obviously conservative and so, detects invertible geometric objects.
It is shown in Proposition \ref{inj-pic-XJ} that it is also injective on the $\op{Pic}$.
It follows from the results of \cite{IMQ} that, for any object $A$ of $\DQMgm$, there is an appropriate collection ${\mathbf X}$,
for which all the
projections of $A$ will be extensions of Tate-motives.
And, if $A$ represents an element of $\picq$, then there is a collection, where all the projections are the Tate-motives $T(i)[j]$.
In particular, we re-prove the Bachmann's result on the invertibility of the reduced motives of affine quadrics - see Proposition
\ref{eq-invert}.
Moreover, two elements of $\picq$ are equal if and only if the respective functions $(i)[j]$ on the set of (non-trivial) projectors
are the same. This creates an environment which permits to substitute the functors of Bachmann in the study of $\picq$.
The new approach is not restricted to the subcategory $\DQMgm$ only, but permits to study the whole $\op{Pic}$ of $\dmk$.
We will address this question in a sequel to this paper.

\section{Motives of affine quadrics}

\subsection{Notations and some basic facts}

Let $k$ be a field of characteristic different from $2$, and $q$ be a (non-degenerate)
quadratic form of dimension $n$ over $k$. We denote as
$A_q$ the affine quadric $\{q=1\}$. Then $A_q$ can be considered as a (not necessarily split) sphere. In particular, over $\kbar$,
the motive $M(A_q)_{\kbar}=T\oplus T([n/2])[n-1]$ is a sum of just two Tate-motives. This motive is a complete invariant of $q$ -
see \cite[Theorem 2.1]{BV}. We have a natural projection $A_q\row\op{Spec}(k)$, and it was shown in \cite{BV} that the reduced motive $\widetilde{M}(A_q)=\op{Cone}[-1](M(A_q)\row T)$ of $A_q$ determines $q$ as well.
This reduced motive is a {\it form} of a Tate-motive, as over $\kbar$ it becomes isomorphic to $T([n/2])[n-1]$.  It
belongs to the category $\dmgmkD$ of geometric motives of Voevodsky - see \cite{voevodsky-triang-motives}.
Moreover, it was shown by T.Bachmann in \cite{BQ} that this motive is invertible there, i.e. it represents an element of $\picd$.
And for $p=q\perp\hh$, one has $\widetilde{M}(P)\cong\widetilde{M}(Q)(1)[2]$ - \cite[Lemma 34]{BQ}. Hence, we get an embedding
\begin{equation*}
\begin{split}
&GW(k) \hookrightarrow  \picd\\
&q-r\hh \mapsto \widetilde{M}(A_q)(-r)[-2r+1]
\end{split}
\end{equation*}
of sets of the Grothendieck-Witt ring of quadratic forms into the Picard of the category of geometric motives.
In other words, we get a complete invariant of the $(0)[0]$-stable $\aaa^1$-homotopy group of spheres (as, by the result of
F.Morel \cite[Theorem 6.2.1]{morel2004motivic-pi0} this group coincides with the $GW(k)$). The corresponding map in topology is
the map $\zzz=\pi_0^s({\Bbb{S}})\row\op{Pic}(D(Ab))$ sending $n$ to $T[n]$ which happens to be an isomorphism.
The aim of the current paper is to study the motivic variant of such a map.

We start by introducing some notations. Let us denote as $e^q\in\picd$ the shifted reduced motive $\widetilde{M}(A_q)[1]$ and as
$\picq$ the subgroup of $\picd$ generated by $e^q$, for all quadratic forms $q/k$.

For a quadratic form $q$, let $q'$ be the quadratic form $\la 1\ra\perp -q$, and $Q,Q'$ be the respective smooth projective quadrics.
Then $A_q=Q'\backslash Q$, and we have the Gysin triangle:
\begin{equation}
\label{AQ'Q}
 M(Q')\row M(Q)(1)[2]\row M(A_q)[1]\row M(Q')[1].
\end{equation}
When both quadratic forms $q$ and $q'$ are split, $M(Q')$ and $M(Q)$ are sums of (pure) Tate-motives, which implies that
$M(A_q)=T\oplus T([n/2])[n-1]$ and $e^q=T([n/2])[n]$ (one can also see it from \cite[Lemma 34]{BQ}). In particular,
$\picq$ contains a subgroup $\ttt\cong\zz\times\zz$ consisting of Tate-motives $T(i)[j],\,i,j\in\zz$, which coincides with the whole
$\picq$ for an algebraically closed field (since all quadrics are split there).
Restriction to $\kbar$ together with the projection provide an isomorphism
$\picq\stackrel{\cong}{\lrow}\ttt\times(\picq/\ttt)$.
Thus, the description of $\picq$ is reduced to that of
$\picq/\ttt$, which amounts to describing the relations among $e^q$'s there.

First, we will describe the inverse of $e^q$.

\begin{prop}
\label{inverse}
Let $q'=\la 1\ra\perp -q$. Then in $\picq/\ttt$,
$$
(e^q)^{-1}=e^{q'}.
$$
\end{prop}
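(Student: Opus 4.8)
The plan is to show that $e^q \otimes e^{q'}$ equals a Tate-motive in $\picq$, which by definition of the group operation in $\picq/\ttt$ gives the claim. The natural strategy is to identify the tensor product $\widetilde{M}(A_q) \otimes \widetilde{M}(A_{q'})$ concretely. Observe that $q' = \la 1\ra \perp -q$ and $(q')' = \la 1\ra \perp -q' = \la 1\ra \perp -(\la 1\ra \perp -q) = q \perp \hh' $ — more precisely $(q')' \cong q$ up to the hyperbolic correction, so the operation $q \mapsto q'$ is an involution on forms modulo hyperbolics, matching the parenthetical remark in the text that it is a "square root" of $q \mapsto q \perp \hh$. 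So the geometry is symmetric in $q$ and $q'$: we have $A_q = Q' \setminus Q$ and $A_{q'} = Q \setminus Q'$ (with $Q, Q'$ the projective quadrics of $q, q'$), both living naturally on the common ambient projective space.

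First I would use the Gysin triangle \eqref{AQ'Q} for $A_q$ together with the analogous one for $A_{q'}$, namely $M(Q) \to M(Q')(1)[2] \to M(A_{q'})[1] \to M(Q)[1]$, and tensor them. Alternatively — and this is cleaner — I would pass to Bachmann's functors $\Phi^E$, which by the results quoted in the introduction are conservative, detect invertibility, and (crucially) are \emph{injective on $\op{Pic}$}. Hence it suffices to check that $\Phi^E(e^q \otimes e^{q'}) = \Phi^E(e^q) \otimes \Phi^E(e^{q'})$ is a one-dimensional $\zz/2$-space in bidegree $(0)[0]$ for every finitely generated $E/k$, since that is exactly $\Phi^E$ of the appropriate Tate-motive (and one reads off which Tate-motive from the split case computed in the text, $e^q = T([n/2])[n]$ and correspondingly $e^{q'} = T([-n/2]?)\ldots$ — the bidegrees will automatically cancel). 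Now $\Phi^E(e^q)$ depends only on whether $q$ becomes isotropic over $E$: using the Gysin triangle \eqref{AQ'Q} and the defining properties $\Phi^E(T(i)[j]) = T(i)[j]$, $\Phi^E(M(\text{anisotropic quadric})) = 0$, one computes $\Phi^E(\widetilde{M}(A_q)[1])$ in the two cases. The key point is that $q$ is isotropic over $E$ if and only if $q' = \la 1\ra \perp -q$ is \emph{an}isotropic over $E$ — indeed $q$ represents $1$ over $E$ exactly when $\la 1 \ra \perp -q$ is isotropic, wait: $\la 1\ra \perp -q$ is isotropic over $E$ iff $q$ represents $1$ over $E$; and since $q$ has a well-defined behaviour, one checks that in each of the two possible cases (the relevant dichotomy being whether the \emph{projective} quadrics $Q$, $Q'$ are isotropic over $E$) exactly one of $\Phi^E(e^q), \Phi^E(e^{q'})$ is a Tate-motive in the "shifted" degree and the other is a Tate-motive in the complementary degree, so that their tensor product is always $T$ in bidegree $(0)[0]$.

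The main obstacle I anticipate is the careful bookkeeping of bidegrees and the case analysis over $E$: one must handle separately the case where $q$ is isotropic over $E$ (so $Q$ gets a Tate summand and $\Phi^E(M(Q)) \neq 0$) versus anisotropic, and similarly track $Q'$, and verify that in every combination the two contributions $\Phi^E(e^q)$ and $\Phi^E(e^{q'})$ multiply to the \emph{same} fixed Tate-motive independent of $E$ — this is what forces the product to be a genuine element of $\ttt$ rather than merely invertible. A subtlety is that $q$ and $q'$ need not both be anisotropic over $k$ to begin with, and $n = \dim q$ versus $\dim q' = n+1$ shifts the parity, so the "middle" Tate-motives $T([n/2])[n-1]$ and $T([(n+1)/2])[n]$ have to be matched up; one should check that $[n/2] + [(n+1)/2] = n$ and the cohomological shifts add to $2n$, giving $e^q \otimes e^{q'} \cong T(n)[2n+ \text{(correction)}]$, i.e.\ a Tate-motive, hence trivial in $\picq/\ttt$. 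Once this bidegree identity is pinned down, injectivity of $\{\Phi^E\}$ on $\op{Pic}$ closes the argument. \Qed
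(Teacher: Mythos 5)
Your top-level strategy --- compute $\Phi^E(e^q\otimes e^{q'})$ and invoke injectivity of $\{\Phi^E\}$ on the Picard group --- is a legitimate route (it is essentially what the paper does for Proposition~\ref{PQRS}), and it differs from the paper's own proof, which identifies $e^{q'}$ as the internal dual $\Homi(e^q,T(n)[2n+1])$ via an explicit map of Gysin triangles, then uses that the dual of an invertible object is its inverse. However, your write-up of the $\Phi^E$-computation has genuine errors that would make the argument collapse as stated.

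First, the geometry is not symmetric in the way you claim: $A_{q'}$ is \emph{not} $Q\setminus Q'$. Since $Q\subset Q'$, the set $Q\setminus Q'$ is empty. One has $A_{q'}=Q''\setminus Q'$ where $q''=\la1\ra\perp -q'=\hh\perp q$; this third quadric $Q''$ (with $\dim Q''=n$) is the key actor, and the useful fact is $i_W(Q''_E)=i_W(Q_E)+1$ because $q''$ contains a hyperbolic plane.

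Second, the claim that ``$\Phi^E(e^q)$ depends only on whether $q$ becomes isotropic over $E$'' is false, and the dichotomy you build on it (``$q$ is isotropic over $E$ iff $q'$ is anisotropic over $E$'') is also false --- $q$ isotropic implies $q'$ isotropic, not the opposite. As Proposition~\ref{phiE-on-ePQ} shows, $\Phi^E(e^q)$ depends on the \emph{full Witt indices} $i_W(Q_E)$ and $i_W(Q'_E)$, which range over all integers up to $[\dim q/2]$, not just on isotropy. Consequently, your picture that ``exactly one of $\Phi^E(e^q),\Phi^E(e^{q'})$ is in the shifted degree and the other in the complementary degree, so their product is $T$ in bidegree $(0)[0]$'' is not what happens; there is a whole ladder of cases indexed by the pair of Witt indices, and the product is never $(0)[0]$ but always $T(n)[2n+1]$. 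Concretely, with $m=\dim Q=n-2$, $m'=\dim Q'=n-1$, $m''=\dim Q''=n$, the sums in Proposition~\ref{phiE-on-ePQ} for $e^q$ and $e^{q'}$ telescope (using $i_W(Q''_E)=i_W(Q_E)+1$) to the constant $(n)[2n+1]$ for every $E$. That is the identity you need; as written, your argument does not establish it and contains internally contradictory conclusions about what the constant is.

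So: the plan is salvageable (and is a genuinely different route from the paper's duality argument), but you must replace the isotropy-dichotomy reasoning with the Witt-index bookkeeping of Proposition~\ref{phiE-on-ePQ}, and you must work with $Q''$ rather than a nonexistent symmetry $A_{q'}=Q\setminus Q'$.
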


\begin{proof}
Let $q''=\la 1\ra\perp -q'=\hh\perp q$ and $\ddim(q)=n$.
Considering the $\op{Cone}[-1]$ of the map of triangles
$$
\xymatrix{
M(A_q) \ar[d] \ar[r] & M(Q') \ar[d] \ar[r] & M(Q)(1)[2] \ar[d]^{0} \ar[r] & M(A_q)[1] \ar[d] \\
T \ar[r] & 0 \ar[r] & T[1] \ar@{=}[r] & T[1],
}
$$
where $M(A_q)\row T$ is the standard projection, we obtain a distinguished triangle
$$
\xymatrix{
\wt{M}(A_q) \ar[r] & M(Q') \ar[r]^(0.4){(a,b)} & M(Q)(1)[2]\oplus T \ar[r]^(0.55){(c,d)} & \wt{M}(A_q)[1],
}
$$
where $c$ is the unique lifting of the map $M(Q)(1)[2]\row M(A_q)[1]$ from (\ref{AQ'Q}),
$d$ is the canonical map from the definition of $\wt{M}(A_q)$, $a$ is a map from (\ref{AQ'Q}), and from the diagram chase
one can see that the standard projection $M(Q')\row T$ factors through $b$, which means that these two maps coincide.
The same applies to the pair $Q'\subset Q''$. We get exact triangles (after shifting):
\begin{align*}
&M(Q')\row M(Q)(1)[2]\oplus T\row \widetilde{M}(A_q)[1]\row M(Q')[1].\\
&M(Q'')\row M(Q')(1)[2]\oplus T\row \widetilde{M}(A_{q'})[1]\row M(Q'')[1].
\end{align*}
Since $q''=\hh\perp q$, $Q''$ is isotropic, and $Q$ can be identified with the quadric of lines $l$ on $Q''$ passing through
a fixed rational point $p$. This gives the decomposition $M(Q'')=T\oplus M(Q)(1)[2]\oplus T(n)[2n]$, where the map $M(Q)(1)[2]\row M(Q'')$
is given by the cycle $A=\{(l,x)|x\in l\}\subset Q\times Q''$. The map $M(Q'')\row M(Q')(1)[2]$ from the Gysin triangle is dual of the
embedding $M(Q')\row M(Q'')$ and given by the cycle $B=\Delta_{Q''}\cap (Q'\times Q'')$. Taking $p$ outside $Q'\subset Q''$,
we obtain that the composition
$M(Q)(1)[2]\row M(Q'')\row M(Q')(1)[2]$  is given by the cycle $C=\Delta_{Q'}\cap (Q\times Q')$, where the embedding $Q\subset Q'$ is
given by the choice of $p$. In other words, this composition is dual of the embedding $M(Q)\row M(Q')$.
The map $T(n)[2n]\row M(Q'')$ is given by the generic cycle of $Q''$, so the composition $T(n)[2n]\row M(Q'')\row M(Q')(1)[2]$ is
given by the generic cycle of $Q'$ and, hence, is dual of the projection $M(Q')\row T$. Thus, the resulting map
$M(Q)(1)[2]\oplus T(n)[2n]\row M(Q')(1)[2]$ is dual to the map $M(Q')\row M(Q)(1)[2]\oplus T$, and we obtain that
$e^{q'}=\Homi(e^q,T(n)[2n+1])$, where $\Homi(-,-)$ is the internal $\Hom$ in $\dmgmkD$.
By \cite[Theorem 33]{BQ} (see also Proposition \ref{eq-invert} below), $e^q$ is an invertible object.
It follows from the standard properties of duality that the dual of an
invertible object is the inverse of it. Thus,
$$
e^q\cdot e^{q'}=T(n)[2n+1]\in\picq.
$$
\Qed
\end{proof}

\begin{exa}
Let $\alpha\in K^M_*(k)/2$ be some pure symbol, and $\lva\alpha\rva$ be the respective Pfister form.
Then $\lva\alpha\rva=q_{\alpha}=\la 1\ra\perp -\widetilde{q}_{\alpha}$, and the motive of the
affine quadric $Q_{\alpha}\backslash\widetilde{Q}_{\alpha}$ is the Rost-motive $M_{\alpha}$ \cite{R2}.
Hence, in $\picq/\ttt$,  $e^{\lva\alpha\rva}=(e^{\widetilde{q}_{\alpha}})^{-1}$ is the inverse of the reduced Rost-motive
$\widetilde{M}_{\alpha}=\op{Cone}[-1](M_{\alpha}\row T)$.
\end{exa}

\subsection{The functors of Bachmann}

In \cite{BQ} T.Bachmann considers $\DQMgm$ - the thick tensor triangulated subcategory of $\dmgmkD$ generated by motives of smooth
projective quadrics over $k$. Then, for any field extension $E/k$ he constructs a tensor triangulated functor:
$$
\Phi^E:\DQMgm\lrow\Kbt,
$$
where $\Kbt$ is the category of finite-dimensional bi-graded $\zz/2$-vector spaces (which we can view as direct sums of
Tate-motives $T(i)[j]$). This functor is essentially defined by the following two properties:
\begin{itemize}
\item[$1)$ ] $\Phi^E(T(i)[j])=T(i)[j]$;
\item[$2)$ ] If smooth projective quadric $Q_E$ is anisotropic, then $\Phi^E(Q)=0$.
\end{itemize}

The main result of $\cite{BQ}$ is:

\begin{thm} {\rm (Bachmann, \cite[Theorem 31]{BQ})}
\label{MT-B}
The collection of functors $\{\Phi^E\}$ for all finitely generated extensions
$E/k$ is conservative, and it is injective on the Picard.
\end{thm}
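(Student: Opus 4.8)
I treat conservativity and $\op{Pic}$-injectivity in turn, reducing each to a computation over one well-chosen extension. Throughout I use that, by its characterisation, $\Phi^E$ coincides with base change $-\otimes_kE$ followed by the analogous functor over $E$; in particular, since a smooth projective quadric that \emph{splits} over $E$ has motive a direct sum of Tate twists of $T$, the functor $\Phi^E$ carries $M(Q)$ to this split motive whenever $Q$ splits over $E$.

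\textbf{Conservativity.} As $\Phi^E$ is triangulated and a morphism is invertible iff its cone vanishes, it suffices to show that $C\in\DQMgm$ with $\Phi^E(C)=0$ for every finitely generated $E/k$ is zero. Such a $C$ lies in the thick tensor subcategory generated by the motives of finitely many smooth projective quadrics $Q_i/k$; choose a finitely generated $E_0/k$ splitting all the $Q_i$. By the opening remark $\Phi^{E_0}(C)\cong C_{E_0}$, whence $C_{E_0}=0$ and a fortiori $C_{\kbar}=0$. It remains to know that base change to $\kbar$ is conservative on $\DQMgm$. For this I would pass to the Chow weight structure on $\dmgmkD$: $C$ has a bounded weight complex $t(C)$, a complex of motives of quadrics and their Tate twists, with $t(C)_{\kbar}=t(C_{\kbar})=0$ in the bounded homotopy category of Chow motives over $\kbar$. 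By Rost nilpotence for motives of quadrics, extended to bounded complexes, the kernel of $\op{End}(t(C))\to\op{End}(t(C)_{\kbar})$ is a nil ideal; as $\op{id}_{t(C)}$ is an idempotent lying in this kernel, it is $0$, so $t(C)\simeq0$; and the weight complex functor is conservative on $\dmgmkD$, so $C=0$.

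\textbf{Injectivity on the Picard group.} Each $\Phi^E$ is monoidal and $\op{Pic}(\Kbt)\cong\zz\oplus\zz$, so the claim is: an invertible $L\in\DQMgm$ with $\Phi^E(L)\cong T$ for all finitely generated $E/k$ satisfies $L\cong T$. Taking $E_0$ to split a presentation of $L$ gives $\Phi^{E_0}(L)\cong L_{E_0}$, so $L_{E_0}\cong T$; in particular $L$ is geometrically trivial. Unlike forms of $0$ (shown above to be trivial), forms of $T$ are genuinely nontrivial -- for $q'=\la 1\ra\perp -q$ anisotropic, a suitable Tate shift of $e^q$ is such a form -- so now the whole family $\{\Phi^E\}$ is essential. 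The plan is to produce a morphism $\phi\colon T\to L$ whose image under every $\Phi^E$ is an isomorphism; then $\op{Cone}(\phi)$ is annihilated by all $\Phi^E$, hence vanishes by the conservativity just proved, so $\phi$ is an isomorphism and $L\cong T$. To build $\phi$ one analyses $\Hom_{\DQMgm}(T,L)=\Hom_{\DQMgm}(L^{-1},T)$ through the weight filtration of $L^{-1}$ (a bounded complex of Tate twists of motives of quadrics): a hypercohomology spectral sequence computes this group from groups of $0$-cycles on products of the quadrics occurring, and one checks that the class realising over $\kbar$ the canonical trivialisation of $L^{-1}_{\kbar}$ lifts to $k$ and is sent to a nonzero element of $\Kbt$ by each $\Phi^E$ -- the latter again visible from the spectral sequence and the defining properties of $\Phi^E$.

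\textbf{The main obstacle.} This last step is the crux. It is a rigidity statement -- \emph{an invertible object of $\DQMgm$ that is geometrically trivial and is sent to $T$ by every $\Phi^E$ is isomorphic to $T$} -- and its content lies entirely in the fine structure of motives of quadrics. I expect to reduce it, via the Chow weight structure and Rost nilpotence, to a concrete assertion about morphisms between indecomposable summands of motives of quadrics over $k$ and how these summands split under field extensions, using the classification of such summands (Vishik, Karpenko); what one ultimately needs is that the splitting behaviour of a quadric over $E$ -- precisely the information recorded by $\Phi^E$ -- pins down the relevant summands sharply enough.
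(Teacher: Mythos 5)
The statement you are asked about is not proved in this paper at all: it is quoted verbatim from Bachmann's article (\cite[Theorem 31]{BQ}) and used as a black box. So there is no proof of record here to compare against. Evaluating the sketch on its own merits, I see genuine gaps in both halves.

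\textbf{Conservativity.} The pivotal step is the assertion that, once $E_0$ is chosen to split all the quadrics in a presentation of $C$, one has $\Phi^{E_0}(C)\cong C_{E_0}$ and hence $\Phi^{E_0}(C)=0$ forces $C_{E_0}=0$. This conflates two very different categories. The target of $\Phi^{E_0}$ is $\Kbt$, a semisimple category of bigraded $\zz/2$-vector spaces in which all extensions between Tate objects vanish; by contrast the thick subcategory of $\dmED{E_0}$ generated by Tate motives has highly nontrivial extension groups (for instance $\Hom(T,T(1)[1])=E_0^*/(E_0^*)^2$, and even $\Hom(T,T(1))$ is nonzero). The restriction of $\Phi^{E_0}$ to the Tate subcategory of $\dmED{E_0}$ is therefore not an equivalence and is not obviously conservative: it discards all higher motivic cohomology of $E_0$. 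What one would need at this point is something to the effect that $\Phi^{E_0}$ agrees with a weight-complex functor into $K^b$ of the heart and that this functor is conservative on bounded objects (Bondarko); none of this is stated, and the identification as written (``$\Phi^{E_0}(C)\cong C_{E_0}$'') is simply incorrect since the two sides live in different categories. Note also that if the step $\Phi^{E_0}(C)=0\Rightarrow C_{E_0}=0$ were unconditionally valid, conservativity would follow from a single well-chosen functor per object (together with your claimed Rost-nilpotence step), which is considerably stronger than what Bachmann's theorem asserts; this should be treated as a warning sign rather than a windfall.

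\textbf{Injectivity on the Picard group.} This part you candidly label as incomplete. Constructing a map $\phi\colon T\to L$ whose image under every $\Phi^E$ is invertible and then concluding via conservativity is the right shape of argument, but everything difficult is concentrated precisely in the step ``one checks that the class $\ldots$ lifts to $k$ and is sent to a nonzero element by each $\Phi^E$'', which you acknowledge you ``expect to reduce'' rather than carry out. As it stands this is a plan, not a proof. The rigidity assertion you isolate (a geometrically trivial invertible object sent to $T$ by every $\Phi^E$ is $T$) is essentially the content of the theorem itself and cannot be relegated to a hoped-for spectral-sequence check.

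In short: since the paper merely cites Bachmann, there is no proof to match. Your sketch for conservativity breaks at the identification of $\Phi^{E_0}$ with base change, and the Picard-injectivity half is explicitly left open.
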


Since $\op{Pic}(\Kbt)=\zz\times\zz$, in particular, this implies:

\begin{prop} {\rm (Bachmann, \cite[Corollary 32]{BQ})}
\label{no-torsion}
The group $\picq$ has no torsion.
\end{prop}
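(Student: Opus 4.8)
The plan is to deduce this formally from Theorem \ref{MT-B} together with the fact that $\op{Pic}(\Kbt)\cong\zz\times\zz$ is torsion-free. First I would observe that $\picq$ is contained in the Picard group of Bachmann's category $\DQMgm$: by definition $\picq$ is generated by the $e^q=\widetilde{M}(A_q)[1]$, and the Gysin triangle (\ref{AQ'Q}) exhibits $\widetilde{M}(A_q)[1]$ as sitting in a distinguished triangle whose other two vertices $M(Q')$ and $M(Q)(1)[2]$ lie in $\DQMgm$; since $\DQMgm$ is a thick triangulated subcategory, $e^q\in\DQMgm$, and hence $\picq\subseteq\op{Pic}(\DQMgm)$.

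Next, each $\Phi^E$ is a tensor triangulated functor $\DQMgm\lrow\Kbt$, so it sends invertible objects to invertible objects (if $A\otimes B\cong T$ then $\Phi^E(A)\otimes\Phi^E(B)\cong\Phi^E(T)=T$) and respects tensor products, hence induces a group homomorphism $\Phi^E_*\colon\op{Pic}(\DQMgm)\row\op{Pic}(\Kbt)\cong\zz\times\zz$. Assembling these over all finitely generated extensions $E/k$ yields a homomorphism
$$
\Phi_*\colon\op{Pic}(\DQMgm)\lrow\prod_{E/k\text{ f.g.}}(\zz\times\zz),
$$
which is injective by Theorem \ref{MT-B} (injectivity on the Picard group).

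Finally, a product of torsion-free abelian groups is torsion-free, and a subgroup of a torsion-free group is torsion-free; so $\op{Pic}(\DQMgm)$ embeds into a torsion-free group and is therefore itself torsion-free. Restricting to the subgroup $\picq$ gives the claim. There is essentially no obstacle here beyond checking the two bookkeeping points above (that $\picq$ really lands in $\DQMgm$, and that the $\Phi^E$ genuinely induce maps of Picard groups); the substance is entirely carried by Bachmann's Theorem \ref{MT-B}.
\Qed
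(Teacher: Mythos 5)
Your argument is correct and matches the paper's, which presents this proposition as an immediate corollary of Theorem~\ref{MT-B} together with $\op{Pic}(\Kbt)=\zz\times\zz$. (One tiny slip: the Gysin triangle~(\ref{AQ'Q}) has third vertex $M(A_q)[1]$, not $\widetilde{M}(A_q)[1]$; you should instead quote the modified triangle $M(Q')\row M(Q)(1)[2]\oplus T\row\widetilde{M}(A_q)[1]\row M(Q')[1]$ appearing in the proof of Proposition~\ref{inverse}, or pass through thickness twice, but the conclusion $e^q\in\DQMgm$ is unaffected.)
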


The above functors of Bachmann will represent the main tool in our calculations.

\section{Structure of $\picq$}
\label{structure-picq}

\subsection{Linearly independent elements}

We will identify the elements of $\op{Pic}(\Kbt)$ with the Tate-motives $T(i)[j]$ (with identification given by $\Phi^E$).
It follows from the results of Bachmann \cite{BQ} that, for any quadratic form $q/k$ and any extension $E/k$, the value
of $\Phi$ on $e^q$ is a single Tate motive $T(f(q,E))[g(q,E)]$. Thus, we get two functions: $(q,E)\mapsto f(q,E),g(q,E)\in\zz$.
We can identify targets of various $\Phi^E$'s, and since $\Phi^E(e^q)$ is invertible, we can consider expressions like
$\frac{\Phi^E}{\Phi^F}(e^q)$ which is still a single Tate-motive.

We would like to describe the group $\picq/\ttt$. Since $\Phi^E$ maps $\ttt$ isomorphically to the $\op{Pic}(\Kbt)$, this quotient-group
is still torsion-free.
Here is a large supply of linearly independent elements there.

\begin{thm}
\label{Dedekind}
Let $\{q_i\}_{i\in I}$ be a collection of quadratic forms over $k$, s.t. $q'_i$ is anisotropic, for all $i$, and for $i\neq j$, the
forms $q'_i$ and $q'_j$ are not stably bi-rationally equivalent. Then the collection of elements $\{e^{q_i}\}_{i\in I}$ is
linearly independent in $\picq/\ttt$.
\end{thm}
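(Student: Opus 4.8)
The plan is to use the Bachmann functors $\Phi^E$ to test linear independence, converting a relation in $\picq/\ttt$ into a system of linear equations among the functions $g(q_i,E)$ (and $f(q_i,E)$) as $E$ varies over finitely generated extensions of $k$. Suppose $\sum_{i\in I} n_i\, e^{q_i} = 0$ in $\picq/\ttt$ with only finitely many $n_i\neq 0$. Applying $\Phi^E$ and recalling that $\Phi^E(e^{q_i})$ is a single Tate motive $T(f(q_i,E))[g(q_i,E)]$, this means the vector $\bigl(\sum_i n_i f(q_i,E),\ \sum_i n_i g(q_i,E)\bigr)$ is independent of $E$ (it equals the fixed value coming from the $\ttt$-part), and in fact we may normalize so that $\sum_i n_i f(q_i,E) = 0$ and $\sum_i n_i g(q_i,E) = 0$ for all $E/k$ finitely generated. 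The strategy is then to choose, for a suitable index $j$, a field $E_j$ that "sees" $q_j'$ but not the others, and extract a contradiction from the resulting equation.

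First I would pin down the behaviour of $f(q,E)$ and $g(q,E)$. Over a field where $q'$ is split (equivalently $q$ is split, since $\op{dim} q' = \op{dim} q + 1$ and isotropy propagates), the Gysin triangle (\ref{AQ'Q}) together with \cite[Lemma 34]{BQ} gives $e^q = T([n/2])[n]$, so $(f,g)$ takes a fixed "generic" value; more importantly, passing from $k$ to $E$ and then to $\overline{E}$, the difference between $e^q$ over $E$ and its split value is governed entirely by the anisotropic part of $q'_E$. The key input is that the function $E \mapsto g(q,E)$ (say) jumps precisely according to the splitting pattern of $q'$: it is locally constant on the "generic splitting" stratification and its value over the generic point of $Q'_i$ itself differs from the value over a field where $q'_i$ stays anisotropic. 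This is exactly the kind of statement extracted from the analysis in \cite{BV} and \cite{BQ} that identifies $\widetilde M(A_q)$ as a complete invariant of $q$; I would cite \cite[Theorem 2.1]{BV} and the computation of $\Phi^E(\widetilde M(A_q))$ in \cite{BQ}.

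The main step — and the main obstacle — is the separation argument. Given $j$ with $n_j \neq 0$, I want a finitely generated $E/k$ over which $q'_j$ becomes isotropic (e.g.\ $E = k(Q'_j)$, the function field of the projective quadric $Q'_j$, or a suitable tower) while controlling what happens to the other $q'_i$. The hypothesis that the $q'_i$ are pairwise \emph{not} stably birationally equivalent is precisely what guarantees that $q'_j$ being isotropic over $E$ does not force any other $q'_i$ ($i\neq j$) to become isotropic: stable birational equivalence of anisotropic quadrics is the same as mutual "isotropy propagation" by the classical results (\cite{IMQ}, and the function-field criteria of Karpenko et al.), so $q'_i$ remains anisotropic over $k(Q'_j)$ for $i\neq j$. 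Then, comparing the equation $\sum_i n_i g(q_i,E) = 0$ over $E=k(Q'_j)$ with the same equation over a common field $F/k$ where \emph{all} the $q'_i$ stay anisotropic (such an $F$ exists, e.g.\ an iterated function field of a generic variety avoiding all the $Q'_i$, or simply because anisotropy is a generic condition), all terms with $i\neq j$ cancel by the locally-constant behaviour, leaving $n_j\bigl(g(q_j,k(Q'_j)) - g(q_j,F)\bigr) = 0$; since the jump $g(q_j,k(Q'_j)) - g(q_j,F)$ is nonzero (this is where one really uses that $\Phi$ detects the splitting of $q'_j$, i.e.\ the non-triviality of $\widetilde M(A_{q_j})$ over a field where $Q'_j$ is anisotropic versus isotropic), we conclude $n_j = 0$. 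Running this over all $j$ gives the result. The delicate points to get right are: (i) producing the "all anisotropic" reference field $F$ and checking the equations hold there simultaneously; (ii) the precise computation of the jump of $(f,g)$ across $k(Q'_j)$, which I expect to reduce to the two cases $Q'_j$ isotropic vs.\ anisotropic of a fixed dimension and to the $\Phi^E$ formulas in \cite{BQ}; and (iii) ensuring that "$q'_i$ anisotropic over $k(Q'_j)$ for $i \neq j$" is a correct reading of the non-stably-birational hypothesis — this is the classical isotropy-propagation dictionary and should be quotable, but it is the logical crux of the whole argument.
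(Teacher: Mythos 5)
There is a genuine gap, and it is exactly at the point you yourself flag as ``the logical crux'' (iii). You assert that, because the $q'_i$ are pairwise not stably birationally equivalent, passing to $E=k(Q'_j)$ makes $q'_j$ isotropic while every other $q'_i$ stays anisotropic. This does not follow. ``Not stably birationally equivalent'' only rules out rational maps in \emph{both} directions; it is perfectly compatible with a one-way rational map $Q'_j\dashrightarrow Q'_i$, in which case $q'_i$ \emph{does} become isotropic over $k(Q'_j)$. A concrete example: let $\beta$ be an anisotropic $2$-fold Pfister form and $\alpha=\beta\otimes\lva c\rva$ a $3$-fold Pfister form containing it as a divisor. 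Then $\alpha_{k(Q_\beta)}$ is hyperbolic (so there is a rational map $Q_\beta\dashrightarrow Q_\alpha$), but generically $\beta_{k(Q_\alpha)}$ stays anisotropic; these two quadrics are not stably birationally equivalent, yet if your $j$ corresponds to $\beta$, the ``all other terms cancel'' step breaks because $\alpha$ also becomes isotropic over $k(Q_\beta)$.

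The paper repairs precisely this by not choosing $j$ arbitrarily. It forms the directed graph with an arrow $q'_i\row q'_j$ whenever there is a rational map $Q'_i\dashrightarrow Q'_j$; transitivity (valuative criterion of properness) plus the hypothesis that no two $q'_i$ are stably birationally equivalent forces this graph to be acyclic, hence to have a sink $q'_l$. Only for such a \emph{final} vertex is it true that $q'_j$ stays anisotropic over $F_l=k(Q'_l)$ for all $j\neq l$; then $\frac{\Phi^{F_l}}{\Phi^k}$ kills every factor except $(e^{q_l})^{m_l}$, which (by Proposition~\ref{phiE-on-ePQ}) contributes a non-trivial power of a non-unit Tate twist, giving $m_l=0$, and one inducts. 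Your overall plan (test with $\Phi^E$, use jumps governed by Witt indices, a reference field where everything stays anisotropic, eliminate one coefficient and induct) is the right one and matches the paper; but without the sink argument the elimination step fails, so as written the proposal is incomplete.
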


\begin{proof}
Recall, that two quadrics $P$ and $Q$ are stably bi-rationally equivalent if and only if there are rational maps $P\dashrightarrow Q$
and $Q\dashrightarrow P$ - \cite[Theorem X.4.25]{Lam}.
Suppose, we have some linear relation in $\picq$:
$$
\prod_i(e^{q_i})^{m_i}=T(*)[*'].
$$
Consider a directed graph whose vertices are $q'_i$ (for $q_i$ appearing in the above equation), and where we have an arrow
$q'_i\row q'_j$ if and only if there exists a rational map $Q'_i\dashrightarrow Q'_j$ (note, that this condition just means that
$Q'_j|_{k(Q'_i)}$ has a rational point, or in other words, that $i_W(q'_j|_{k(Q'_i)})>0$).
Since this property is transitive (by the valuative criterion of properness - \cite[Theorem II.4.7]{Har}),
and all our forms $q'_i$ are pairwise not stably birationally equivalent, we obtain that
our graph has no oriented cycles. Hence, there is (at least one) final vertex $q'_l$. Consider $F_l=k(Q'_l)$.
Then $i_W(q'_j|_{F_l})=0$, for $j\neq l$ (as the vertex is final), while $i_W(q'_l|_{F_l})\neq 0$
(because any quadric is isotropic over its own function field).
Since the forms $q'_j$, and so $q_j$, for $j\neq l$, stay anisotropic over $F_l$, it follows that $\Phi^{F_l}$ acts in the same way on
$e^{q_j}$ as $\Phi^{k}$. At the same time, since $q'_l$ is anisotropic over $k$ and isotropic over $F_l$, these functors act differently
on $e^{q_l}$ - see \cite[Sect. 3]{BV}, or Proposition \ref{phiE-on-ePQ} below. As a result, we obtain that
$$
\frac{\Phi^{F_l}}{\Phi^k}\left(\prod_i(e^{q_i})^{m_i}\right)=T(x\cdot m_l)[y\cdot m_l],\hspace{3mm}\text{where}\hspace{2mm}T(x)[y]\neq T.
$$
And this must be equal to $T$, since all $\Phi^E$'s act the same (identical) way on Tate-motives.
Hence, $m_l=0$, and we managed to exclude
one term from our relation. Then we argue by induction.
\Qed
\end{proof}

\begin{cor}
\label{Pfister-indep}
Let $\{\lva\alpha\rva\}_{\alpha}$ be the collection of all Pfister forms (of various fold-ness) for all non-zero pure symbols
$\alpha\in K^M_*(k)/2$.
Then the collection $\{e^{\lva\alpha\rva}\}_{\alpha}$ is linearly independent in $\picq/\ttt$.
\end{cor}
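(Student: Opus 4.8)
The plan is to obtain this as a direct consequence of Theorem \ref{Dedekind}, after using Proposition \ref{inverse} to replace the Pfister forms by their ``square roots''. Recall from the Example above that $\widetilde{q}_{\alpha}$ is the form with $\lva\alpha\rva=\la 1\ra\perp -\widetilde{q}_{\alpha}$; thus $(\widetilde{q}_{\alpha})'=\lva\alpha\rva$, and Proposition \ref{inverse} gives $e^{\lva\alpha\rva}=(e^{\widetilde{q}_{\alpha}})^{-1}$ in $\picq/\ttt$. As $\picq/\ttt$ is abelian, the family $\{e^{\lva\alpha\rva}\}_{\alpha}$ is linearly independent if and only if $\{e^{\widetilde{q}_{\alpha}}\}_{\alpha}$ is, and the latter will follow from Theorem \ref{Dedekind} applied to the collection $\{\widetilde{q}_{\alpha}\}_{\alpha}$, once its hypotheses are checked. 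Since the forms associated to the $\widetilde{q}_{\alpha}$ are exactly $(\widetilde{q}_{\alpha})'=\lva\alpha\rva$, two things must be verified.

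First, $\lva\alpha\rva$ must be anisotropic for every non-zero $\alpha$. This is standard: a Pfister form is either anisotropic or hyperbolic, and it is hyperbolic exactly when the corresponding symbol vanishes in $K^M_*(k)/2$ (Arason--Pfister; see \cite{Lam}). Second, for $\alpha\neq\beta$ the quadrics $\lva\alpha\rva$ and $\lva\beta\rva$ must not be stably bi-rationally equivalent. Here I would invoke the classical fact that stably bi-rationally equivalent anisotropic Pfister forms are isometric: were $\lva\alpha\rva$ and $\lva\beta\rva$ stably bi-rationally equivalent, each would be isotropic --- hence, being Pfister, hyperbolic --- over the function field of the other; the Cassels--Pfister subform theorem then makes each similar to a subform of the other, which forces equal dimension and, by the roundness of Pfister forms, $\lva\alpha\rva\cong\lva\beta\rva$; and isometric Pfister forms determine the same symbol in $K^M_*(k)/2$ (a standard consequence of Milnor's conjecture, $I^n(k)/I^{n+1}(k)\cong K^M_n(k)/2$), whence $\alpha=\beta$, a contradiction. (Alternatively, one could compute directly that the kernel of $K^M_*(k)/2\row K^M_*(k(\lva\beta\rva))/2$ consists of the $\beta$-divisible classes and run the same symmetry argument.)

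Granting these two points, Theorem \ref{Dedekind} applies and yields the linear independence of $\{e^{\widetilde{q}_{\alpha}}\}_{\alpha}$, hence of $\{e^{\lva\alpha\rva}\}_{\alpha}$. Practically everything here is bookkeeping around Proposition \ref{inverse} and Theorem \ref{Dedekind}; the only genuinely external input is the statement that distinct anisotropic Pfister forms lie in distinct stable bi-rational classes, so --- although it is a well-known fact --- that is the natural candidate for the one delicate step.
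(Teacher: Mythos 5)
Your proof is correct and follows essentially the same route as the paper: reduce via Proposition \ref{inverse} to the collection $\{\widetilde{q}_{\alpha}\}_{\alpha}$, apply Theorem \ref{Dedekind}, and check that distinct anisotropic Pfister quadrics lie in distinct stable birationality classes. The only cosmetic difference is the classical tool invoked for that last check --- you use the Cassels--Pfister subform theorem, while the paper cites the divisibility statement \cite[Corollary 23.6]{EKM} --- but these are two standard forms of the same fact.
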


\begin{proof}
Indeed, let $\lva\alpha\rva=q_{\alpha}=\la 1\ra\perp -\widetilde{q}_{\alpha}$, where $\widetilde{q}_{\alpha}$ is a {\it pure part} of
a Pfister form. Then the collection $\{\widetilde{q}_{\alpha}\}_{\alpha}$ satisfies the conditions of the Theorem \ref{Dedekind},
since different Pfister forms are not stably bi-rationally equivalent (because the existence of a rational map
$Q_{\alpha}\dashrightarrow Q_{\beta}$ means that $Q_{\beta}|_{k(Q_{\alpha})}$ is isotropic and hence hyperbolic (being a Pfister form),
which implies that
$q_{\beta}$ is divisible by $q_{\alpha}$ - see \cite[Corollary 23.6]{EKM}).
Finally, by Proposition \ref{inverse},
$e^{\lva\alpha\rva}=(e^{\widetilde{q}_{\alpha}})^{-1}$.
\Qed
\end{proof}

\subsection{The new generators}

Let $Q\supset P$ be a co-dimension one embedding of smooth projective quadrics. We will use the
notation $e^{Q\backslash P}$ for the (shifted) reduced motive of the affine quadric $Q\backslash P$.

Let us explicitly describe the value of the functor $\Phi^E$ on $e^{Q\backslash P}$ in terms of the Witt-indices of
both projective quadrics - cf. \cite[Sect. 3]{BV}. Below we will use the additive notation $(x)[y]$ for the elements of the abelian
group $\zz^2$.

\begin{prop}
\label{phiE-on-ePQ}
Let $P'\supset P$ be a co-dimension one embedding of smooth projective quadrics, $\ddim(P')=m'$, $\ddim(P)=m$ (of course, $m'=m+1$),
$E/k$ be some field extension and $j_{P'}=i_W(P'_E)$,
$j_P=i_W(P_E)$ be the Witt indices of $P'$ and $P$ over $E$. Then $\Phi^E(e^{P'\backslash P})=T(x)[y]$, where
$(x)[y]=(f(P')-f(P))[g(P')-g(P)]$, for some functions $f$ and $g$. More precisely,
$$
(x)[y]=\sum_{l'=0}^{j_{P'}-1}(m'-2l')[2m'-4l'+1]-\sum_{l=0}^{j_P-1}(m-2l)[2m-4l+1].
$$
\end{prop}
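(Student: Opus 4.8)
The plan is to compute $\Phi^E(e^{P'\backslash P})$ by reducing to the case where $E = k$ and both quadrics already carry their full Witt index, and then to use the Gysin triangle (\ref{AQ'Q}) together with the known decompositions of isotropic quadric motives. First I would observe that it suffices to work over $E$ itself: since $\Phi^E$ on $\DQMgm$ factors through base change to $E$ (property $2)$ of the Bachmann functor only sees anisotropy over $E$, and property $1)$ is insensitive to the base field), we have $\Phi^E(e^{P'\backslash P}) = \Phi^E\big((e^{P'\backslash P})_E\big)$, and $(P'\backslash P)_E$ is the affine quadric attached to the form $q_E$. So from now on I may assume $k = E$, and I must show that $\Phi^k(e^{P'\backslash P}) = T(x)[y]$ with $(x)[y]$ given by the stated double sum in terms of $j_{P'} = i_W(P')$ and $j_P = i_W(P)$.

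The key computation is the effect of passing from $q$ to $q \perp \hh$, i.e. of adding one hyperbolic plane to each of $P$ and $P'$ simultaneously. By \cite[Lemma 34]{BQ} one has $\widetilde M(A_{q\perp\hh}) \cong \widetilde M(A_q)(1)[2]$, hence $e^{q\perp\hh} = e^q(1)[2]$, so $\Phi^k(e^{q\perp\hh}) = \Phi^k(e^q)(1)[2]$. On the other hand, if I split off $j := \min(j_{P'}, j_P)$ hyperbolic planes from $q$ at once — writing $q = \hh^{\perp j}\perp q_{\mathrm{an}}'$ in the relevant range — the motives $M(P)$ and $M(P')$ each decompose, by the standard isotropic decomposition, as a sum of $j$ pairs of Tate motives $T(l)[2l]\oplus T(m-l)[2m-l]$ (resp. with $m'$) for $l = 0,\dots,j-1$, plus the motive of the anisotropic kernel quadric. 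Feeding this into the Gysin triangle $M(P') \to M(P)(1)[2] \to M(A_q)[1] \to M(P')[1]$ and applying $\Phi^k$ (which kills the anisotropic parts once we are at the final stage and keeps exactly the Tate summands), the contribution is a telescoping alternating sum: the indices appearing are precisely $(m'-2l')[2m'-4l'+1]$ from the $M(P')$-side with a plus sign for $l' = 0,\dots,j_{P'}-1$, and $(m-2l)[2m-4l+1]$ from the $M(P)(1)[2]$-side with a minus sign for $l = 0,\dots,j_P-1$ — i.e. exactly the claimed formula, and this is consistent with the shift-by-$(1)[2]$ relation above since replacing $q$ by $q\perp\hh$ raises both $j_{P'}$ and $j_P$ by one and shifts every term of the sum by $(1)[2]$. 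One should also check the base case $j_{P'} = j_P = 0$ (both anisotropic over $E$), where $\Phi^E(M(P)) = \Phi^E(M(P')) = 0$, so the triangle gives $\Phi^E(e^{P'\backslash P}) = \Phi^E(T[1])$... — here one must instead track the correct normalization: the reduced (not unreduced) motive means the empty sums are interpreted so that the formula returns $(0)[\text{the appropriate shift}]$, matching the computation $e^q = \widetilde M(A_q)[1]$ with $A_q$ having only the trivial Tate summand surviving. Concretely I would verify directly that for $q$ anisotropic with $q' = \la 1\ra \perp -q$ also anisotropic, $\Phi^k(e^q) = T$, which is what both empty sums give.

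The main obstacle I anticipate is bookkeeping: getting the bi-degrees and signs in the telescoping sum exactly right, and in particular pinning down which Tate summands of the isotropic decomposition of $M(P)$ and $M(P')$ survive under $\Phi^k$ as opposed to being absorbed into (or cancelling against) the image of the anisotropic kernel — since $\Phi^k$ of an anisotropic quadric motive is $0$ only when that quadric is genuinely anisotropic over the relevant field, and one has to be careful that the Gysin connecting map does not mix a surviving Tate summand on one side with a vanishing summand on the other. This is handled cleanly by the inductive structure: strip off hyperbolic planes one at a time using $e^{q\perp\hh} = e^q(1)[2]$, reducing to the case $\min(j_{P'},j_P) = 0$, and in that case only one side of the Gysin triangle contributes a nontrivial isotropic part, so no cross terms arise and the formula follows by direct inspection. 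The rest is a routine but careful index chase, which I will not grind through here.
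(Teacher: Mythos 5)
Your proof is correct but organized differently from the paper's. You reduce to the case $E=k$ by appealing to a compatibility of $\Phi^E$ with base change, then strip hyperbolic planes one at a time via \cite[Lemma 34]{BQ} (the relation $e^{q\perp\hh}=e^q(1)[2]$, hence $\Phi(e^{q\perp\hh})=\Phi(e^q)(1)[2]$), reducing to the base cases $(j_{P'},j_P)\in\{(0,0),(1,0)\}$, which you then read off from the Gysin triangle where only one side contributes non-cancelling Tate summands. The paper instead works directly with the Gysin triangle for the given $(j_{P'},j_P)$ over $k$: it identifies the single surviving Tate summand after cancellation as $T(l)[2l]$ when $j_{P'}=j_P=l$ and as $T(m'-l)[2m'-2l+1]$ when $j_{P'}=j_P+1=l+1$, and then presents the stated double sum as the telescoping expression obtained by walking $(0,0)\to(1,0)\to(1,1)\to\cdots$. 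What your route buys is that you never have to analyze cancellations in the general case --- everything is pushed into one trivial base case plus the twist-by-$(1)[2]$; what it costs is the extra input of Lemma 34 of \cite{BQ} and the base-change factorization of $\Phi^E$, the latter being plausible but not among the two characterizing properties stated in the paper (the paper sidesteps it by simply invoking that $\Phi^E$ on a quadric motive is determined by its $E$-Witt index). Two small slips worth flagging: the split Tate summand should read $T(m-l)[2m-2l]$ rather than $T(m-l)[2m-l]$, and the phrase that the surviving bidegrees ``appearing'' are $(m'-2l')[2m'-4l'+1]$ and $(m-2l)[2m-4l+1]$ is not literal --- these are the increments of a telescoping sum, not the bidegrees of the Tate summands in $\Phi^E(M(P'))$ or $\Phi^E(M(P)(1)[2]\oplus T)$; the actual summands are $T(l)[2l]$, $T(m'-l')[2m'-2l']$, etc. Fixing this is exactly the ``careful index chase'' you defer, and the computation goes through as you expect.
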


\begin{proof}
We have an exact triangle
$$
M(P')\row M(P)(1)[2]\oplus T\row \widetilde{M}(P'\backslash P)[1]\row M(P')[1].
$$
Our Witt indices are related as follows: $j_{P}\leq j_{P'}\leq j_{P}+1$ (since $p\subset p'\subset p\perp\hh$).
From the defining property of Bachmann's functors (as well as from \cite{BQ}) we see that $\Phi^E(e^{P'\backslash P})$ will be a single
Tate-motive $T(x)[y]$ whose grading depends only on the above Witt indices.
It remains to determine the exact shape of such a dependence.
If $(j_{P'},j_P)=(l,l)$, then the "non-cancelled" Tate-motive is on the $P$-side and $(x)[y]=(l)[2l]$, while if
$(j_{P'},j_P)=(l+1,l)$, then the "non-cancelled" Tate-motive is on the $P'$-side and $(x)[y]=(m'-l)[2m'-2l+1]$.
Consider $(x)[y]$ as a function of $(l',l)$ (for $l\leq l'\leq l+1$) defined by these formulas.
We can move from the pair $(0,0)$ to $(j_{P'},j_{P})$ in $j_{P'}+j_{P}$ steps: $(0,0)\row(1,0)\row(1,1)\row(2,1)\row\ldots$.
When we move $(l',l')\row(l'+1,l')$, $(x)[y]$ jumps up by $(m'-2l')[2m'-4l'+1]$.
When we move $(l+1,l)\row(l+1,l+1)$, then  $(x)[y]$ jumps down by $(m-2l)[2m-4l+1]$.
Finally, for $(l',l)=(0,0)$,  $(x)[y]=(0)[0]$.
Hence, the formula.
\Qed
\end{proof}

\begin{prop}
\label{PQRS}
Suppose, we have co-dimension one embeddings
$$
\xymatrix @-1.2pc{
& Q \ar[dl] & \\
S & & P \ar[dl] \ar[ul] \\
& R \ar[ul] &
}
$$
of smooth projective quadrics. Then in $\picq$,
$$
e^{S\backslash Q}\cdot e^{Q\backslash P}=e^{S\backslash R}\cdot e^{R\backslash P}.
$$
\end{prop}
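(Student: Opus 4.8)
The plan is to verify the identity after applying the conservative collection of Bachmann's functors $\{\Phi^E\}_{E/k}$ from Theorem~\ref{MT-B}. Since these functors are injective on $\picq\subset\picd$, and since both sides of the claimed identity are invertible objects (each factor is of the form $e^{Q\backslash P}$, hence invertible by Theorem~33 of \cite{BQ}, and $\picq$ is a group), it suffices to prove that
$$
\Phi^E(e^{S\backslash Q})+\Phi^E(e^{Q\backslash P})=\Phi^E(e^{S\backslash R})+\Phi^E(e^{R\backslash P})
$$
in $\op{Pic}(\Kbt)=\zz^2$ for every finitely generated field extension $E/k$, where I now write the group law additively as in Proposition~\ref{phiE-on-ePQ}.

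The key step is to feed each of the four terms through Proposition~\ref{phiE-on-ePQ}. For a co-dimension one embedding $P'\supset P$ that proposition gives $\Phi^E(e^{P'\backslash P})=(f(P')-f(P))[g(P')-g(P)]$, where $f$ and $g$ are the functions of the Witt index of the projective quadric over $E$ appearing in the statement of that proposition (explicitly the partial sums $\sum_{l=0}^{j-1}(m-2l)[2m-4l+1]$, depending only on $\ddim$ and $i_W$ over $E$). Applying this to the four embeddings $S\supset Q$, $Q\supset P$, $S\supset R$, $R\supset P$ in the diagram, the left-hand side becomes
$$
\bigl(f(S)-f(Q)\bigr)+\bigl(f(Q)-f(P)\bigr)=f(S)-f(P)
$$
in the first coordinate, and likewise $g(S)-g(P)$ in the second; the right-hand side becomes $\bigl(f(S)-f(R)\bigr)+\bigl(f(R)-f(P)\bigr)=f(S)-f(P)$ and $g(S)-g(P)$. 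The two sides therefore agree term by term — this is just the telescoping of the additive function $(f,g)$ along the two length-two paths from $S$ down to $P$ in the commuting square. One small point to check here is that the codimension-one chains $S\supset Q\supset P$ and $S\supset R\supset P$ indeed both have total codimension two, so that in each $\ddim$ drops by one at every arrow and Proposition~\ref{phiE-on-ePQ} applies at each arrow with the indicated dimensions; this is immediate from the hypothesis that all arrows in the displayed diagram are codimension-one embeddings of smooth projective quadrics.

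I do not expect a serious obstacle: the content is entirely in Proposition~\ref{phiE-on-ePQ}, and what remains is the observation that $\Phi^E(e^{P'\backslash P})$ depends on the pair $(P',P)$ only through the "difference of potentials'' $(f,g)(P')-(f,g)(P)$, which telescopes. The only thing worth being careful about is that the functors $\Phi^E$ are tensor triangulated, so $\Phi^E(x\cdot y)=\Phi^E(x)\otimes\Phi^E(y)$, and under the identification $\op{Pic}(\Kbt)=\zz^2$ the tensor product becomes addition of bidegrees; this is what licenses passing from the multiplicative identity in $\picq$ to the additive telescoping identity above. Having verified the identity under every $\Phi^E$, conservativity and injectivity on $\op{Pic}$ (Theorem~\ref{MT-B}) give the equality in $\picq$, completing the proof. \Qed
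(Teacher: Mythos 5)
Your proof is correct and follows essentially the same route as the paper: apply Proposition~\ref{phiE-on-ePQ} to each factor, observe that the values $\Phi^E(e^{P'\backslash P})=(f(P')-f(P))[g(P')-g(P)]$ telescope along both chains $S\supset Q\supset P$ and $S\supset R\supset P$ to give $(f(S)-f(P))[g(S)-g(P)]$, and invoke injectivity on $\op{Pic}$ from Theorem~\ref{MT-B}. The paper simply writes the common telescoped value out explicitly as the difference of the two partial sums indexed by $j_S$ and $j_P$.
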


\begin{proof}
Let $E/k$ be some field extension, and $j_S,j_R,j_Q,j_P$ be the Witt indices of our quadrics over $E$.
Then, by Proposition \ref{phiE-on-ePQ}, we have that both $\Phi^E(e^{S\backslash Q}\cdot e^{Q\backslash P})$
and $\Phi^E(e^{S\backslash R}\cdot e^{R\backslash P})$ are isomorphic to $T(x)[y]$, where
$$
(x)[y]=\sum_{i=0}^{j_{S}-1}(\ddim(S)-2i)[2\ddim(S)-4i+1]-\sum_{l=0}^{j_P-1}(\ddim(P)-2l)[2\ddim(P)-4l+1].
$$
By the Theorem \ref{MT-B}, $e^{S\backslash Q}\cdot e^{Q\backslash P}=e^{S\backslash R}\cdot e^{R\backslash P}$
in $\picq$.
\Qed
\end{proof}

The above relations among $e^q$ permit to introduce new generators of $\picq$.

\begin{defi}
\label{def-det}
Let $Q$ be an $m$-dimensional smooth projective quadric with the complete flag of subquadrics:
$Q=Q_m\supset Q_{m-1}\supset\ldots\supset Q_0$. Define:
$$
\ddet(Q):=e^{Q_m\backslash Q_{m-1}}\cdot e^{Q_{m-1}\backslash Q_{m-2}}\cdot\ldots\cdot e^{Q_0}\in\picq.
$$
\end{defi}

Clearly, one can express the shifted reduced motive $e^{Q\backslash P}$ of an affine quadric $Q\backslash P$ as
$\ddet(Q)/\ddet(P)$. Thus, determinants of smooth projective quadrics is another system of generators of $\picq$.
It follows from Proposition \ref{PQRS} that $\ddet(Q)$ does not depend on the choice of a complete flag in $Q$
and is an invariant of $Q$. Moreover, it actually depends on $M(Q)$ only.

\begin{prop}
\label{Phidet-MQ}
Let $Q$ be a smooth projective quadric of dimension $m$. Then:
\begin{itemize}
\item[$1)$ ] For any $E/k$, $\Phi^E(\ddet(Q))=T(x)[y]$, where
$$
(x)[y]=\sum_{i=0}^{i_W(Q_E)-1}(m-2i)[2m-4i+1].
$$
\item[$2)$ ] $\ddet(Q)$ depends on $M(Q)$ only.
\end{itemize}
\end{prop}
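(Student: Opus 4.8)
The plan is to compute $\Phi^E(\ddet(Q))$ explicitly and then invoke the injectivity of the Bachmann functors. For part $1)$, fix the complete flag $Q = Q_m \supset Q_{m-1} \supset \cdots \supset Q_0$ from Definition \ref{def-det}, and for a smooth projective quadric $P/k$ and an extension $E/k$ abbreviate
$$
F_E(P) := \sum_{i=0}^{i_W(P_E)-1}(\ddim(P) - 2i)[\,2\ddim(P) - 4i + 1\,] \;\in\; \zz^2
$$
(an empty sum when $i_W(P_E) = 0$). Proposition \ref{phiE-on-ePQ} says exactly that $\Phi^E(e^{Q_l \backslash Q_{l-1}}) = T\!\left(F_E(Q_l) - F_E(Q_{l-1})\right)$ for $l = 1, \ldots, m$, and --- in the codimension-one instance where the smaller quadric is the empty $(-1)$-dimensional one --- that $\Phi^E(e^{Q_0}) = T(F_E(Q_0))$. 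Since $\Phi^E$ is monoidal and tensoring Tate objects adds bidegrees, applying $\Phi^E$ to the product in Definition \ref{def-det} and telescoping yields
$$
\Phi^E(\ddet(Q)) \;=\; \sum_{l=1}^{m}\bigl(F_E(Q_l) - F_E(Q_{l-1})\bigr) \;+\; F_E(Q_0) \;=\; F_E(Q_m) \;=\; F_E(Q),
$$
which is the claimed formula (with $m = \ddim(Q)$). As a by-product this re-proves that $\ddet(Q)$ is independent of the flag --- the right-hand side manifestly is, and $\{\Phi^E\}$ is injective on $\op{Pic}$ by Theorem \ref{MT-B} --- although one may equally well just quote the flag-independence already obtained from Proposition \ref{PQRS}.

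For part $2)$, let $P$ and $Q$ be smooth projective quadrics with $M(P) \cong M(Q)$ in $\dmgmkD$. Base-changing to $\kbar$ (where both motives become the same explicit sum of Tate motives, whose top twist pins down the dimension) gives $\ddim(P) = \ddim(Q)$; and for every extension $E/k$ the restricted motives $M(P_E) \cong M(Q_E)$ are isomorphic, so by the Criterion of motivic equivalence of projective quadrics (\cite{IMQ,lens}; here one only needs the implication that isomorphic motives force $i_W(P_E) = i_W(Q_E)$, i.e.\ that the Witt index over each field is a motivic invariant) we get $i_W(P_E) = i_W(Q_E)$ for all $E/k$. Feeding both equalities into the formula of part $1)$ gives $\Phi^E(\ddet(P)) = F_E(P) = F_E(Q) = \Phi^E(\ddet(Q))$ for every finitely generated $E/k$, whence $\ddet(P) = \ddet(Q)$ in $\picq$ by the injectivity on $\op{Pic}$ in Theorem \ref{MT-B}.

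The only non-formal ingredient is the implication ``$M(P) \cong M(Q) \Rightarrow i_W(P_E) = i_W(Q_E)$ for all $E$'', i.e.\ the elementary half of the motivic-equivalence criterion; everything else is the telescoping bookkeeping together with the conservativity and injectivity of the $\Phi^E$. The one small technical point I would take care to verify is the bottom of the flag: one must check that Proposition \ref{phiE-on-ePQ} (equivalently, the identity $\Phi^E(e^{Q_0}) = T(F_E(Q_0))$) remains valid in the degenerate case where $P$ is the empty $(-1)$-dimensional quadric, so that the telescope closes up at $F_E(Q)$ and not at $F_E(Q) - F_E(Q_0)$.
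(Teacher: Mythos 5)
Your proof is correct and follows essentially the same route as the paper's: part 1) is a telescoping computation from Definition \ref{def-det} and Proposition \ref{phiE-on-ePQ}, and part 2) combines part 1) with the Criterion of motivic equivalence and the injectivity on $\op{Pic}$ from Theorem \ref{MT-B}. The additional care you take at the bottom of the flag (treating $e^{Q_0}$ as $e^{Q_0\backslash\emptyset}$ with $F_E(\emptyset)=0$) and your remark that only the easy implication of the motivic-equivalence criterion is needed for part 2) are both accurate, just spelled out more explicitly than the paper does.
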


\begin{proof}
1) Follows straight from the Definition \ref{def-det}
and Proposition \ref{phiE-on-ePQ}.

2) It follows from part 1) that $\Phi^E(\ddet(Q))$ depends only on $m$ and $i_W(Q_E)$. By the Criterion of motivic equivalence of
projective quadrics - \cite[Prop 5.1]{IMQ}, or \cite[Thm 4.18]{lens} (see also \cite{Kar2}), the motives $M(P)$ and $M(Q)$
of two smooth projective quadrics are isomorphic if and only if $\ddim(P)=\ddim(Q)$ and $i_W(P_E)=i_W(Q_E)$, for any
field extension $E/k$. Thus, $\Phi^E(\ddet(Q))$ depends only on $M(Q)$, and by Theorem \ref{MT-B}, so does $\ddet(Q)$ itself.
\Qed
\end{proof}

Let $N$ be a direct summand of the (possibly shifted) motive $M(Q)(i)[2i]$ of anisotropic quadric.
Then, over $\kbar$, it splits into a direct sum of pure
Tate-motives $T(l)[2l]$. These Tate-motives are of two kinds: the {\it lower} and the {\it upper} ones. The lower ones are characterized
by the property that the splitting map $N\row T(l)[2l]$ is defined already over the ground field $k$, while for the upper ones, the
splitting map $T(l)[2l]\row N$ is defined over $k$ - see \cite[Appendix]{EC} for details.
Similarly, for an extension $E/k$, we denote as
$Tate^{up}(N_E)$ the collection of {\it upper} Tate-motives splitting from $N_E$, and as $Tate_{lo}(N_E)$ - the collection of
{\it lower} Tate-motives splitting from $N$ over $E$.
Let us define certain auxiliary elements of $\op{Pic}(\Kbt)$ we will use.

\begin{defi}
\label{def-PhiEk-detN}
Let $N$ be a direct summand of $M(Q)(i)[2i]$ with $Q$ anisotropic. Define:
$$
\frac{\Phi^E}{\Phi^k}(\ddet(N)):=\left(\otimes_{T(l)[2l]\in Tate^{up}(N_E)}T(l)[2l]\right)\otimes
\left(\otimes_{T(l)[2l]\in Tate_{lo}(N_E)}T(l)[2l-1]\right)^{-1}.
$$
\end{defi}

If $N$ is a direct summand of the motive of a possibly isotropic quadric $Q$, then we can define
$\frac{\Phi^E}{\Phi^k}(\ddet(N))$ by the same formula, where we ignore those Tate-summands of $N_E$ which already
split over $k$. Note, that the number of upper Tate motives splitting from $N_E$ is equal to the number of the lower Tate-motives
splitting there (by \cite[Thm 4.19]{lens}), for any anisotropic $N$. Hence, the Tate shift of $N$ does not affect the result:
$$
\frac{\Phi^E}{\Phi^k}(\ddet(N(i)[2i]))=\frac{\Phi^E}{\Phi^k}(\ddet(N)).
$$
Our elements behave multiplicatively with respect to the direct sum of motives:
$$
\frac{\Phi^E}{\Phi^k}(\ddet(N_1\oplus N_2))=
\frac{\Phi^E}{\Phi^k}(\ddet(N_1))\otimes\frac{\Phi^E}{\Phi^k}(\ddet(N_2)).
$$
And, consequently, it can be extended to arbitrary direct sums of direct summands as above.

In the case of $N=M(Q)$ with $\ddim(Q)=m$, we have the decomposition over $E$:
$$
M(Q_E)=\oplus_{i=0}^{i_W(Q_E)-1}(T_{lo}(i)[2i]\oplus T^{up}(m-i)[2m-2i])\oplus M((Q_E)_{anis})(i_W(Q_E))[2i_W(Q_E)].
$$
From Proposition \ref{Phidet-MQ}(1) we get:
$$
\frac{\Phi^E}{\Phi^k}(\ddet(M(Q)))=\frac{\Phi^E}{\Phi^k}(\ddet(Q)).
$$
This explains the notation. A'priori, it is unclear, if the element $\frac{\Phi^E}{\Phi^k}(\ddet(N))$ comes from some element
"$\ddet(N)$" in $\picq$
(so, the notations are somewhat misleading). But, in certain cases, we can produce $\ddet(N)\in\picq$.

\begin{exa}
\label{Pfister-decomp}
Let $\alpha\in K^M_r(k)/2$ be a pure symbol, and $\lva\alpha\rva$ be the respective Pfister form. Then
$\lva\alpha\rva=q_{\alpha}=\la 1\ra\perp -\widetilde{q}_{\alpha}$, the Gysin triangle
$$
M(Q_{\alpha})\row M(\widetilde{Q}_{\alpha})(1)[2]\row M(Q_{\alpha}\backslash\widetilde{Q}_{\alpha})[1]\row M(Q_{\alpha})[1]
$$
is split, and the motive $M(Q_{\alpha}\backslash\widetilde{Q}_{\alpha})$ is the Rost-motive $M_{\alpha}$.
Also, $e^{Q_{\alpha}\backslash\widetilde{Q}_{\alpha}}=\ddet(Q_{\alpha})/\ddet(\widetilde{Q}_{\alpha})$.
Since $M(Q_{\alpha})=M(\widetilde{Q}_{\alpha})(1)[2]\oplus M_{\alpha}$, we obtain that
$$
\frac{\Phi^E}{\Phi^k}(\ddet(Q_{\alpha})/\ddet(\widetilde{Q}_{\alpha}))=\frac{\Phi^E}{\Phi^k}(\ddet(M_{\alpha})).
$$
So, we can define $\ddet(M_{\alpha})\in\picq$ as
$\ddet(Q_{\alpha})/\ddet(\widetilde{Q}_{\alpha})=e^{Q_{\alpha}\backslash\widetilde{Q}_{\alpha}}$.
It is nothing else, but the shifted reduced Rost-motive $\widetilde{M}_{\alpha}[1]$.
By the result of Rost \cite{R,R2},
$$
M(Q_{\alpha})=\oplus_{i=0}^{2^{r-1}-1}M_{\alpha}(i)[2i].
$$
Due to the multiplicativity property of the $\frac{\Phi^E}{\Phi^k}(\ddet(-))$ and Bachmann's injectivity result - Theorem \ref{MT-B}
(comparing also $\Phi^k$ of both parts),
we obtain:
$$
\ddet(Q_{\alpha})=\left(e^{Q_{\alpha}\backslash\widetilde{Q}_{\alpha}}\right)^{2^{r-1}}.
$$
In particular, by Proposition \ref{inverse}, in $\picq/\ttt$ we have an identity:
$$
\ddet(Q_{\alpha})=\left(e^{\lva\alpha\rva}\right)^{-2^{r-1}}.
$$
\end{exa}

Consider the full additive subcategory $\chqD$ of $Chow(k,\zz/2)$ which is the pseudo-abelian envelope of the subcategory
generated by the motives of smooth projective quadrics. Then in this category holds the Krull-Schmidt principle,
i.e. the decomposition into irreducible objects is unique - see \cite{lens} and \cite{CM}.
Let us introduce the following equivalence relation on the set of objects of $\chqD$.

\begin{defi}
\label{T-equiv}
Suppose $N$ and $M$ are objects of $\chqD$. We say that $N\stackrel{T}{\sim}M$ if anisotropic indecomposable direct summands of $N$ can
be identified up to (reordering and) Tate-shifts with such summands of $M$. More precisely, if $N\cong(\oplus Tates)\oplus \oplus_{i=1}^rN_i$,
$M\cong(\oplus Tates)\oplus \oplus_{i=1}^rM_i$, where $M_i\cong N_i(a_i)[2a_i]$, for some $a_i\in\zz$ and some choice of ordering.
\end{defi}

In particular, the $T$-equivalence ignores Tate-motives, but it keeps the total rank of anisotropic direct summands, and it is stable
under field extensions.

We will use a minor modification of Bachmann's injectivity Theorem \ref{MT-B}.
Observe that, for any $E/k$, the map $\frac{\Phi^E}{\Phi^k}:\picq/\ttt\row\op{Pic}(\Kbt)$ is well defined.

\begin{prop}
\label{inj-T}
The collection of maps $\frac{\Phi^E}{\Phi^k}$, for all finitely generated $E/k$, is injective on $\picq/\ttt$.
\end{prop}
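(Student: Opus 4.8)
The plan is to deduce Proposition \ref{inj-T} from Bachmann's injectivity Theorem \ref{MT-B} by a purely formal manipulation, using that $\ttt$ is exactly the subgroup on which all the "comparison" maps $\frac{\Phi^E}{\Phi^k}$ vanish. Concretely, recall the splitting $\picq\stackrel{\cong}{\lrow}\ttt\times(\picq/\ttt)$ provided by restriction to $\kbar$ together with the projection; under this splitting an element $a\in\picq$ is carried to the pair $(\Phi^{\kbar}(a),\bar a)$, and the second coordinate is what we must separate. So suppose $\bar a\in\picq/\ttt$ has $\frac{\Phi^E}{\Phi^k}(\bar a)=T$ for every finitely generated $E/k$; I want to show $\bar a$ is trivial.

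First I would lift $\bar a$ to an honest element $a\in\picq$. Since $\Phi^k$ maps the subgroup $\ttt\cong\zz\times\zz$ isomorphically onto $\op{Pic}(\Kbt)=\zz\times\zz$ (because the split Tate-motives $T(i)[j]$ go to the corresponding bi-degrees), there is a unique $t\in\ttt$ with $\Phi^k(t)=\Phi^k(a)$. Replace $a$ by $a\cdot t^{-1}$; this does not change $\bar a$, and now $\Phi^k(a)=T$. For any finitely generated $E/k$ we then have
\begin{equation*}
\Phi^E(a)=\frac{\Phi^E}{\Phi^k}(a)\otimes\Phi^k(a)=\frac{\Phi^E}{\Phi^k}(\bar a)\otimes T=T,
\end{equation*}
where the middle equality uses that $\frac{\Phi^E}{\Phi^k}$ factors through the projection $\picq\row\picq/\ttt$ (this is exactly the well-definedness observation recorded just before the statement) together with our hypothesis on $\bar a$. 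Thus $\Phi^E(a)=T$ for all finitely generated $E/k$.

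Now Theorem \ref{MT-B} says the collection $\{\Phi^E\}$ is injective on the Picard group, so $\Phi^E(a)=T$ for all $E$ forces $a=T$ in $\picq$, hence $\bar a$ is trivial in $\picq/\ttt$. Applying this to the quotient of two elements gives injectivity of the collection $\{\frac{\Phi^E}{\Phi^k}\}$ on $\picq/\ttt$. There is no real obstacle here: the only points requiring care are (i) that one may normalize by a genuine Tate-class so as to reduce the ratio-statement to the value-statement, which is clean because $\Phi^k|_{\ttt}$ is an isomorphism onto $\op{Pic}(\Kbt)$, and (ii) that $\frac{\Phi^E}{\Phi^k}$ really is defined on the quotient — both already noted in the text. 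One might alternatively phrase the whole argument more symmetrically: $\frac{\Phi^E}{\Phi^k}(a)=T$ for all $E$ (including $E=k$, trivially) says $\Phi^E(a)$ is independent of $E$, i.e. $\Phi^E(a)=\Phi^k(a)$ for all $E$; picking the unique $t\in\ttt$ with $\Phi^k(t)=\Phi^k(a)$, Theorem \ref{MT-B} gives $a=t\in\ttt$, i.e. $\bar a=0$. Either way the proposition is immediate from Bachmann's theorem plus the isomorphism $\Phi^k\colon\ttt\xrightarrow{\cong}\op{Pic}(\Kbt)$.
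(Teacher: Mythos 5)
Your proof is correct and is essentially the paper's own argument: both reduce to observing that $\frac{\Phi^E}{\Phi^k}(a)=T$ for all $E$ means $\Phi^E(a)=\Phi^k(a)=T(a')[b']$ is constant, that $\Phi^E$ takes the same value on the honest Tate motive $T(a')[b']$, and then invoking Bachmann's injectivity Theorem \ref{MT-B} to conclude $a=T(a')[b']\in\ttt$. The paper's write-up is the second (``more symmetric'') phrasing you offer; the preliminary normalization by a Tate class in your first phrasing is harmless but unnecessary.
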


\begin{proof}
Suppose all the homomorphisms $\frac{\Phi^E}{\Phi^k}$ vanish on certain element $x\in\picq$. That means that
$\Phi^E(x)=\Phi^k(x)=T(a)[b]$, for any $E/k$, and some fixed $a,b\in\zz$. But $\Phi^E(T(a)[b])$ is also equal to $T(a)[b]$.
By the Bachmann's injectivity Theorem, $x=T(a)[b]$.
\Qed
\end{proof}

Let us recall some facts about indecomposable direct summands in the motives of quadrics.
Suppose, $N$ is such a summand in the motive of a quadric $Q$, s.t. the smallest Tate-motive in the decomposition of $N_{\kbar}$
is $T(n)[2n]$. Then we can assign to $N$ the Grassmannian $X_N=G(Q,n)$ of $n$-dimensional projective subspaces on $Q$, and to the latter
variety we can assign the motive $\hii_{X_N}$ of the respective \v{C}ech simplicial scheme $\check{C}ech(X_N)$ (where all our motives are with $\zz/2$-coefficients). Recall that this is a
{\it form} of a Tate-motive which becomes isomorphic to Tate-motive if and only if our variety has a zero-cycle of degree 1 - see
\cite[Sect. 2.3]{IMQ}. For quadratic Grassmannians the latter condition is equivalent to the existence of a rational point (by Springer's
theorem).

\begin{prop}
\label{dir-sum-facts}
With notations as above, let $N$ and $L$ be indecomposable direct summands in the motives of quadrics. Then the following are equivalent:
\begin{itemize}
\item[$(1)$] $N$ is isomorphic to $L$ up to Tate-shift;
\item[$(2)$] $X_N$ and $X_L$ are stably bi-rationally equivalent;
\item[$(3)$] $\hii_{X_N}\cong\hii_{X_L}$ (in this case, this isomorphism is unique).
\end{itemize}
\end{prop}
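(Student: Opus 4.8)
The plan is to prove the cyclic chain of implications $(1)\Rightarrow(2)\Rightarrow(3)\Rightarrow(1)$, which is the cleanest route since each single step is either standard or already essentially contained in the literature cited in the excerpt. For $(1)\Rightarrow(2)$: if $N$ and $L$ are isomorphic up to Tate-shift, then after fixing such an isomorphism the two indecomposable summands live in the motives of quadrics in a compatible way, and in particular for every field extension $E/k$ the summand $N_E$ splits off a Tate-motive (equivalently, splits entirely) if and only if $L_E$ does. The point where the smallest Tate-motive $T(n)[2n]$ of $N_{\kbar}$ becomes split over $E$ is precisely the statement that the Grassmannian $X_N=G(Q,n)$ acquires a rational point over $E$ (this is the content of the definition of $X_N$; see \cite[Sect. 2.3]{IMQ}, using Springer's theorem for quadratic Grassmannians so that "zero-cycle of degree $1$" is equivalent to "rational point"). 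Hence $X_N$ and $X_L$ have rational points over exactly the same field extensions, which by the valuative criterion of properness (as in the proof of Theorem \ref{Dedekind}) is equivalent to the existence of rational maps in both directions, i.e. to $X_N$ and $X_L$ being stably bi-rationally equivalent \cite[Theorem X.4.25]{Lam}.

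For $(2)\Rightarrow(3)$: the motive $\hii_X$ of the \v{C}ech simplicial scheme depends, up to isomorphism, only on the class of $X$ in the appropriate ordering of function fields — more precisely, $\hii_X\cong\hii_Y$ as soon as $X$ has a rational point over $k(Y)$ and $Y$ has a rational point over $k(X)$, which is exactly stable bi-rational equivalence for smooth projective $X,Y$. This is standard: a rational map $X\dashrightarrow Y$ (equivalently $Y(k(X))\neq\varnothing$) induces a morphism $\hii_X\row\hii_Y$, and since both are idempotents which are forms of $T$, a pair of such morphisms in both directions forces an isomorphism, and it is unique because $\Hom(\hii_X,\hii_X)=\zz/2$ (both $\hii_X$ are subobjects of $T$, so there is at most one nonzero map between any two of them). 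So $(2)\Rightarrow(3)$ together with the uniqueness clause.

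For $(3)\Rightarrow(1)$: this is the substantive direction and where I expect the main obstacle to lie. The idea is that $\hii_{X_N}$ controls, via the theory of upper motives, the indecomposable summand $N$ itself. Concretely, $\hii_{X_N}$ being isomorphic to $\hii_{X_L}$ means $X_N$ and $X_L$ have rational points over exactly the same fields, hence the same "splitting pattern", and by Karpenko's theory of upper motives the upper indecomposable summand of $M(X_N)$ is isomorphic to the upper indecomposable summand of $M(X_L)$ up to Tate-twist. One then has to identify $N$ with the upper motive of $X_N$: this uses that $N$ is, by construction, the summand whose lowest Tate-motive over $\kbar$ is $T(n)[2n]$ with $X_N=G(Q,n)$, so $N$ is the "upper" summand relative to that Grassmannian — this is the content of \cite[Appendix]{EC} and \cite{lens}. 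Combining, $N\cong L$ up to Tate-shift. The delicate point is making the identification "$N$ = upper motive of $X_N$" precise and checking that the Krull--Schmidt uniqueness in $\chqD$ (from \cite{lens,CM}) lets one pass from equality of upper motives to the stated equivalence of the original summands; I would spell this out carefully, since it is exactly the input that converts a statement about \v{C}ech simplicial schemes into a statement about the concrete summands $N$ and $L$.

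An alternative, perhaps shorter, organization would be to prove $(1)\Leftrightarrow(2)$ and $(2)\Leftrightarrow(3)$ separately, both via the "same fields of rational points" characterization, which makes the role of the valuative criterion and of Springer's theorem completely symmetric; I would likely present it in the cyclic form above but remark that all three conditions are equivalent to the single condition that $X_N(E)\neq\varnothing\iff X_L(E)\neq\varnothing$ for all $E/k$.
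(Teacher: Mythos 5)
Your proposal is correct and follows essentially the same route as the paper's proof. The paper proves $(1)\Rightarrow(2)$ by exactly your argument (the lowest Tate-motive splits from $N$ iff it splits from $M(Q)$ iff $i_W(Q)>n$ iff $X_N$ has a rational point, followed by the flag-variety rationality trick to upgrade "rational maps both ways" to stable birational equivalence); it handles the substantive converse, your step $(3)\Rightarrow(1)$, by citing \cite[Theorem 4.17, Corollary 4.4]{lens}, which is the same content as the upper-motives classification you invoke; and it proves $(2)\Leftrightarrow(3)$ from \cite[Theorem 2.3.4]{IMQ} plus Springer's theorem, which is the fact you re-derive by direct manipulation of idempotent forms of $T$. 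Organizing the implications cyclically rather than pairwise is a cosmetic difference.

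One small terminological caution on $(3)\Rightarrow(1)$, which you yourself flagged as the delicate point: the identification "$N=$ upper motive of $X_N$" is not literally correct, since $N$ is a summand of $M(Q)$, not of $M(X_N)$. The statement actually needed is that in the Krull--Schmidt category $\chqD$, indecomposable anisotropic summands of quadric motives are classified up to Tate-shift by the isomorphism class of $\hii$ attached to their Grassmannian — that is, that two such summands with isomorphic $\hii$'s are themselves isomorphic up to shift. This is precisely what \cite[Thm 4.17]{lens} together with \cite[Cor 4.4]{lens} provides (equivalently, Karpenko's upper-motives theory restricted to quadrics). Once phrased that way, your sketch is complete.
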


\begin{proof}
The fact that the lowest Tate-motive $T(n)[2n]$ splits from $N$ is equivalent to the fact that it splits from the motive of the
respective quadric $Q$. This, in turn, is equivalent to the fact that $i_W(Q)>n$ (by \cite[Prop. 1]{R} and \cite[Prop. 2.6]{lens}),
which means exactly that the Grassmannian $X_N$ has a
rational point. Thus (1) implies that $X_N$ and $X_L$ have rational points simultaneously, i.e. there are rational maps both ways.
For quadratic Grassmannians the latter condition is equivalent to stable bi-rational equivalence. Indeed,
clearly, $X_N=G(Q,n)$ is stably bi-rationally equivalent to the flag variety $F(Q,n)$ of subspaces of dimensions from $0$ to $n$ (as the latter variety is a consecutive projective bundle over the former one).
And flag variety is rational as soon as it has a rational point (as a consecutive quadric fibration
$F(Q,n)\row F(Q,n-1)\row\ldots\row Q\row\op{Spec}(k)$). If $F(P,l)$ is the flag variety
corresponding to the motive $L$, then $F(Q,n)\times F(P,l)$ is stably bi-rationally equivalent to both $F(Q,n)$ and $F(P,l)$, since each
variety is rational over the generic point of the other (recall, that these varieties have rational points simultaneously). Thus, (1) implies (2). The opposite implication follows from
\cite[Theorem 4.17]{lens} taking into account \cite[Corollary 4.4]{lens}.

Finally, (2) $\Leftrightarrow$ (3) by \cite[Theorem 2.3.4]{IMQ} and above considerations, since for quadratic Grassmannians
the existence of a zero-cycle of degree 1 is equivalent to that of a rational point (by the Theorem of Springer).
\Qed
\end{proof}

Now we can describe the relations among $\ddet(Q)$ in $\picq/\ttt$. Any such relation can be reduced to the form
$\prod_i\ddet(P_i)=\prod_j\ddet(Q_j)$.

\begin{thm}
\label{relations}
Let $P_i$, $Q_j$ be some smooth projective quadrics.
The following conditions are equivalent:
\begin{itemize}
\item[$(1)$ ]
$\prod_i\ddet(P_i)=\prod_j\ddet(Q_j)\in\picq/\ttt$;
\item[$(2)$ ] $\oplus_iM(P_i)\stackrel{T}{\sim}\oplus_jM(Q_j)$.
\end{itemize}
\end{thm}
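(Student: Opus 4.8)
The plan is to pass from statements about the product in $\picq/\ttt$ to statements about the auxiliary invariants $\frac{\Phi^E}{\Phi^k}(\ddet(-))$, which by Definition \ref{def-PhiEk-detN} and the remarks after it are defined and multiplicative in $\oplus$ on all of $\chqD$, and to prove the sharper assertion: \emph{for $N,M\in\chqD$ one has $\frac{\Phi^E}{\Phi^k}(\ddet(N))=\frac{\Phi^E}{\Phi^k}(\ddet(M))$ for every finitely generated $E/k$ if and only if $N\stackrel{T}{\sim}M$.} Given this, Theorem \ref{relations} follows at once: $\frac{\Phi^E}{\Phi^k}\colon\picq/\ttt\to\op{Pic}(\Kbt)$ is a group homomorphism, and since $\frac{\Phi^E}{\Phi^k}(\ddet(P_i))=\frac{\Phi^E}{\Phi^k}(\ddet(M(P_i)))$ and the invariant is multiplicative in $\oplus$, we get $\frac{\Phi^E}{\Phi^k}(\prod_i\ddet(P_i))=\frac{\Phi^E}{\Phi^k}(\ddet(\oplus_iM(P_i)))$, and likewise for the $Q_j$; hence by Proposition \ref{inj-T} condition $(1)$ is equivalent to the equality of all these invariants for $N=\oplus_iM(P_i)$, $M=\oplus_jM(Q_j)$, and by the sharper assertion this is equivalent to $(2)$. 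One direction of the sharper assertion is immediate: $\frac{\Phi^E}{\Phi^k}(\ddet(-))$ is multiplicative in $\oplus$, insensitive to Tate-shifts of the summands, and trivial on Tate-motives, so $N\stackrel{T}{\sim}M$ forces all the invariants to agree.

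For the converse I would induct on the number $s$ of isomorphism classes, up to Tate-shift, of anisotropic indecomposable direct summands occurring in $N$ or in $M$; this is well defined by the Krull--Schmidt property of $\chqD$. If $s=0$ both motives are sums of Tate-motives and $N\stackrel{T}{\sim}M$ holds trivially. Otherwise, list these classes as $b_1,\dots,b_s$, choose representatives $N_t$ and the associated quadratic Grassmannians $X_t=X_{N_t}$; by Proposition \ref{dir-sum-facts} the varieties $X_t$ are pairwise not stably birationally equivalent. On $\{b_1,\dots,b_s\}$ introduce an arrow $b_t\to b_u$ (for $t\neq u$) whenever there is a rational map $X_t\dashrightarrow X_u$. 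As in the proof of Theorem \ref{Dedekind}, this relation is transitive by the valuative criterion of properness, and since the $X_t$ are pairwise not stably birational it is acyclic; hence it has a sink $b_l$. Put $F_l=k(X_l)$.

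Over $F_l$ the Grassmannian $X_l$ has a rational point, so by \cite[Prop. 1]{R} and \cite[Prop. 2.6]{lens} the lowest (hence lower) Tate-motive of $(N_l)_{\kbar}$ splits off $(N_l)_{F_l}$; whereas for $u\neq l$ the variety $X_u$ has no $F_l$-point, and by the structure theory of Tate-motives splitting off summands of quadric motives (\cite{IMQ,lens}) the motive $(N_u)_{F_l}$ then acquires no new lower Tate-motive, hence — the numbers of upper and of lower Tate-motives splitting off an anisotropic summand being equal by \cite[Thm 4.19]{lens} — no new Tate-motive at all, so $\frac{\Phi^{F_l}}{\Phi^k}(\ddet(N_u))=T$. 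Consequently, writing $n_l$, $m_l$ for the multiplicities of $b_l$ in $N$, $M$ (in their Krull--Schmidt decompositions, counting all Tate-shifted copies),
$$
\frac{\Phi^{F_l}}{\Phi^k}(\ddet(N))=n_l\cdot\frac{\Phi^{F_l}}{\Phi^k}(\ddet(N_l)),\qquad \frac{\Phi^{F_l}}{\Phi^k}(\ddet(M))=m_l\cdot\frac{\Phi^{F_l}}{\Phi^k}(\ddet(N_l))
$$
in $\op{Pic}(\Kbt)\cong\zz^2$ (additive notation). By Definition \ref{def-PhiEk-detN}, if $(N_l)_{F_l}$ splits off upper Tate-motives $T(u)[2u]$ and lower ones $T(v)[2v]$, equal in number, say $p\geq 1$, then $\frac{\Phi^{F_l}}{\Phi^k}(\ddet(N_l))=\sum(u)[2u]-\sum(v)[2v-1]=(A)[2A+p]$, where $A$ is the difference of the two sums; since $p\geq 1$ this is non-zero, and as $\zz^2$ is torsion-free we conclude $n_l=m_l$. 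Replacing $N$ and $M$ by the direct complements of $N_l^{\oplus n_l}$ in each — which exist by Krull--Schmidt and, by multiplicativity, still satisfy the hypothesis of the sharper assertion — strictly lowers $s$; the inductive hypothesis gives that these complements are $T$-equivalent, and adding $N_l^{\oplus n_l}$ back yields $N\stackrel{T}{\sim}M$.

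The main obstacle, and the part deserving most care, is the analysis over the sink field $F_l$: showing that $N_l$ is the only class whose splitting pattern over $F_l$ differs from the one over $k$, and that $\frac{\Phi^{F_l}}{\Phi^k}(\ddet(N_l))$ is non-zero. This rests on the structure theory of \cite{IMQ,lens} for Tate-motives splitting off indecomposable direct summands of quadric motives — in particular on the fact that lower Tate-motives split off ``from the bottom upwards'', so that the lowest one splits as soon as any of them does, together with the symmetry between the upper and lower families.
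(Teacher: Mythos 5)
Your proof is correct and uses essentially the same approach as the paper: pass from $\picq/\ttt$ to the multiplicative invariants $\frac{\Phi^E}{\Phi^k}(\ddet(-))$, decompose via Krull--Schmidt in $\chqD$, build the directed graph on (up-to-shift) isomorphism classes of anisotropic indecomposables using the associated quadratic Grassmannians, pick a sink $N_l$ and pass to $F_l=k(X_l)$, then isolate the $N_l$-contribution from the nonvanishing of $\frac{\Phi^{F_l}}{\Phi^k}(\ddet(N_l))$. The only difference is organizational: you induct on the number of isomorphism classes and peel off one class per step, while the paper first cancels common summands and then derives a contradiction from a hypothetical nontrivial residual relation; these are logically equivalent.
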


\begin{proof} $(2\Rightarrow 1)$
Let $\oplus_iM(P_i)=(\oplus Tates)\oplus\oplus_lN_l$
and $\oplus_jM(Q_j)=(\oplus Tates)\oplus\oplus_lM_l$
be the decompositions of both sides of (2) into Tates and anisotropic irreducibles.
Then (after reordering) we have isomorphisms $M_l\cong N_l(a_l)[2a_l]$, for some $a_l\in\zz$.
Then as $\frac{\Phi^E}{\Phi^k}$ ignores Tate-motives and Tate-shifts, we get:
\begin{equation*}
\begin{split}
&\frac{\Phi^E}{\Phi^k}\left(\prod_i\ddet(P_i)\right)=\frac{\Phi^E}{\Phi^k}(\ddet(\oplus_i M(P_i)))=
\frac{\Phi^E}{\Phi^k}(\ddet(\oplus_lN_l))=\\
&\frac{\Phi^E}{\Phi^k}(\ddet(\oplus_lM_l))=
\frac{\Phi^E}{\Phi^k}(\ddet(\oplus_j M(Q_j)))=\frac{\Phi^E}{\Phi^k}\left(\prod_j\ddet(Q_j)\right).
\end{split}
\end{equation*}
By Proposition \ref{inj-T}, $\prod_i\ddet(P_i)=\prod_j\ddet(Q_j)\in\picq/\ttt$.

$(1\Rightarrow 2)$ Let $\oplus_iM(P_i)=(\oplus Tates)\oplus\oplus_lN_l$
and $\oplus_jM(Q_j)=(\oplus Tates)\oplus\oplus_rM_r$ be the decomposition into a direct sum of Tates and anisotropic
irreducibles.
From (1) we know that
\begin{equation*}
\begin{split}
&\frac{\Phi^E}{\Phi^k}(\ddet(\oplus_lN_l))=\frac{\Phi^E}{\Phi^k}(\ddet(\oplus_i M(P_i)))=
\frac{\Phi^E}{\Phi^k}\left(\prod_i\ddet(P_i)\right)=\\
&\frac{\Phi^E}{\Phi^k}\left(\prod_j\ddet(Q_j)\right)=\frac{\Phi^E}{\Phi^k}(\ddet(\oplus_j M(Q_j)))=
\frac{\Phi^E}{\Phi^k}(\ddet(\oplus_rM_r)).
\end{split}
\end{equation*}
Let us cancel as many isomorphic (up to Tate-shift) direct summands from both sides as possible.
Suppose, the remaining relation is non-trivial.
So we can assume that, $N_l$ is not isomorphic to $M_r$ for any $l,r$, while we have:
$$
\frac{\Phi^E}{\Phi^k}(\ddet(\oplus_lN_l))=\frac{\Phi^E}{\Phi^k}(\ddet(\oplus_rM_r)).
$$
Each such irreducible is a direct summand in the (possibly, shifted) motive of some smooth projective anisotropic quadric.
To each such summand $N_l$ (respectively $M_r$), we can associate the field extension $F_l/k$ (respectively $E_r/k$) as follows.
Let $N_l$ be a direct summand of $M(R)$, s.t. the smallest Tate in $(N_l)_{\kbar}$ is $T(m)[2m]$. Then take $F_l=k(X_l)$ - the
function field of the Grassmannian $X_l=G(R,m)$ of $m$-dimensional projective subspaces on $R$, and similarly for $E_r=k(Y_r)$.

If any two of the above extensions are stably bi-rationally equivalent, then the respective indecomposable direct summands are
isomorphic (up to shift) by Proposition \ref{dir-sum-facts}.
Consider the directed graph whose vertices are isomorphism (up to shift)
classes of $\{N_l\}_l$, $\{M_r\}_r$ and where we have an arrow $N_l\row M_r$ (respectively, $N_l\row N_{l'}$, $M_r\row M_{r'}$)
iff there is a rational map $X_l\dashrightarrow Y_r$ ($X_l\dashrightarrow X_{l'}$, etc.). Since the existence of such a map is a
transitive property, our graph has no directed cycles (as we have chosen a single representative from each isomorphism class of
direct summands).
Hence, our graph has (at least one) final vertex. Suppose, it is $N_l$. Then, every indecomposable summand (from our list) which
is not isomorphic to $N_l$ will stay anisotropic over $F_l$.
Indeed, if $L$ is this other summand, and some Tate-motive would split from $L_{F_l}$ (by \cite[Theorem 4.19]{lens} we can always assume it
to be the "lower" one - see \cite[Appendix]{EC}), then the lowest Tate-motive (from which $L$ "starts")
will split there as well (since splitting from $L$ is equivalent to the splitting from the motive of the respective quadric $L$ is part of,
and for quadrics the splitting of the larger "lower" Tate-motive implies the splitting of the smaller one). But the lowest such Tate-motive
can't split from the motive of the mentioned quadric, since the respective Grassmannian has no rational point over $F_l$.
In particular, since no $M_r$'s were isomorphic to $N_l$, we get that all $M_r$'s stay anisotropic over $F_l$. Hence,
$\frac{\Phi^{F_l}}{\Phi^k}(\ddet(\oplus_rM_r))=T$. Similarly, $\frac{\Phi^{F_l}}{\Phi^k}(\ddet(N_{l'}))=T$, for all
$N_{l'}$ not isomorphic to
$N_l$.  At the same time, some (actually, exactly two) Tate-motives split from
$N_l$ over $F_l$, since the Tate-motive $T(m)[2m]$ splits from $M(R)|_{F_l}$.
The fact that the number of the "lower" Tate-summands split from $N_l|_{F_l}$ is equal to the number of
the "upper" Tate-summands split follows from \cite[Theorem 4.19]{lens} (while the fact that there is only one of each kind follows
from \cite[Theorem 4.17]{lens}, but we don't need this). Hence
$\frac{\Phi^{F_l}}{\Phi^k}(\ddet(N_l))=T(x)[y]\neq T$ (follows from the Definition \ref{def-PhiEk-detN} taking into account that no
"lower" Tate-motive of $N_l$ can be "above" an "upper" one (since it is so for the quadric)). Then
$\frac{\Phi^{F_l}}{\Phi^k}(\ddet(\oplus_nN_n))=T(d\cdot x)[d\cdot y]$, where $d$ is the number of indecomposables $N_{l'}$ isomorphic
(up to shift) to $N_l$. We obtain a contradiction: $T=T(d\cdot x)[d\cdot y]$. Thus, we can cancel all the terms, and so,
$\oplus_iM(P_i)\stackrel{T}{\sim}\oplus_jM(Q_j)$.
\Qed
\end{proof}

In particular, we can see that $\ddet(Q)\in\picq$ is a complete invariant of $M(Q)$.
This extends the Criterion of motivic equivalence of projective quadrics - \cite[Prop 5.1]{IMQ}, or \cite[Thm 4.18]{lens}
(see also \cite{Kar2}).

\begin{cor}
\label{mot-eq-proj}
Let $P$ and $Q$ be smooth projective quadrics. Then the following conditions are equivalent:
\begin{itemize}
\item[$(1)$ ] $M(P)\cong M(Q)$;
\item[$(2)$ ] $\ddet(P)=\ddet(Q)\in\picq$.
\end{itemize}
\end{cor}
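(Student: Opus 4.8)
The plan is to deduce this from Theorem \ref{relations} applied to the one-quadric collections $\{P\}$ and $\{Q\}$. With $I=\{P\}$ and $J=\{Q\}$, condition (1) of Theorem \ref{relations} becomes $\ddet(P)=\ddet(Q)\in\picq/\ttt$, and condition (2) becomes $M(P)\stackrel{T}{\sim}M(Q)$. So the corollary will follow once I upgrade both sides from their "$/\ttt$" / "$T$-equivalence" versions to the honest equalities $M(P)\cong M(Q)$ and $\ddet(P)=\ddet(Q)\in\picq$.

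For the passage from $\picq/\ttt$ to $\picq$: first note that $M(P)\cong M(Q)$ forces $\ddim(P)=\ddim(Q)=:m$ (one sees this over $\kbar$). Then $\ddet(P)$ and $\ddet(Q)$ are forms of the same Tate-motive, namely the one read off from $M(-)_{\kbar}$ via Proposition \ref{Phidet-MQ}(1) with $i_W=\lceil m/2\rceil$ over $\kbar$; equivalently, $\Phi^{\kbar}(\ddet(P))=\Phi^{\kbar}(\ddet(Q))$. Hence an equality in $\picq/\ttt$ together with agreement of the $\ttt$-components (which is exactly agreement of $\Phi^{\kbar}$, using the isomorphism $\picq\stackrel{\cong}{\lrow}\ttt\times(\picq/\ttt)$ recalled in Section \ref{structure-picq}) gives equality in $\picq$. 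So I will run Theorem \ref{relations} and separately check the $\Phi^{\kbar}$-components match, which is immediate once the dimensions agree.

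For the passage from $T$-equivalence to isomorphism: if $M(P)\stackrel{T}{\sim}M(Q)$ with $\ddim(P)=\ddim(Q)$, I claim $M(P)\cong M(Q)$. By Definition \ref{T-equiv} the anisotropic indecomposable summands of $M(P)$ and $M(Q)$ coincide up to Tate-shift, and both total ranks agree; since $\ddim(P)=\ddim(Q)$, the "Tate part" of each is the split motive $\oplus_{i=0}^{\lceil m/2\rceil-1}(T(i)[2i]\oplus T(m-i)[2m-2i])$ (with the extra $T(m/2)[m]$ when $m$ is even), which is the same on both sides. So the only thing to pin down is that the Tate-shifts $a_l$ are actually zero, i.e. that $N_l$ and $M_l$ are not merely Tate-isomorphic but genuinely isomorphic. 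This follows from the motivic decomposition theory already invoked in the excerpt: the indecomposable summand of the motive of an anisotropic quadric containing a given lowest Tate-motive $T(n)[2n]$ is determined, and its position is forced once one matches it against the over-$\kbar$ decomposition of $M(P)_{\kbar}=M(Q)_{\kbar}$ (equal since dimensions agree). Concretely, I invoke the Criterion of motivic equivalence — \cite[Prop 5.1]{IMQ}, \cite[Thm 4.18]{lens} — as used in the proof of Proposition \ref{Phidet-MQ}(2): $T$-equivalence plus equal dimension forces $i_W(P_E)=i_W(Q_E)$ for every $E/k$ (both count the number of "lower" Tate-summands of the common set of anisotropic pieces that split over $E$, shifted identically), hence $M(P)\cong M(Q)$.

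The main obstacle is the last point: ruling out a nonzero Tate-shift between the anisotropic parts. $T$-equivalence by definition allows such shifts, so one must use the rigidity of quadric motives — the fact that, inside $\chqD$, an indecomposable summand's "binary/shell" position relative to the ambient quadric is rigid and is detected by the Witt-index function $E\mapsto i_W((-)_E)$. Once that is in hand (via the cited Criterion, exactly as in Proposition \ref{Phidet-MQ}(2)), everything else is bookkeeping, and the two implications of the corollary drop out of Theorem \ref{relations}.
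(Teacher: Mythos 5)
Your proposal takes essentially the same route as the paper's proof: for $(2)\Rightarrow(1)$ you extract $\ddim(P)=\ddim(Q)$ from the $\kbar$-component, invoke Theorem~\ref{relations} to get $M(P)\stackrel{T}{\sim}M(Q)$, then use the rank-counting observation that $T$-equivalence plus equal dimension forces $i_W(P_E)=i_W(Q_E)$ for all $E/k$, and conclude via the Criterion of motivic equivalence, exactly as the paper does (the paper's $(1)\Rightarrow(2)$ is just a citation of Proposition~\ref{Phidet-MQ}(2), which you instead re-derive through Theorem~\ref{relations}, a harmless redundancy). One incidental slip: your parenthetical assertion that ``the Tate part of each is the split motive $\oplus_{i=0}^{\lceil m/2\rceil-1}(T(i)[2i]\oplus T(m-i)[2m-2i])$'' confuses the summands that split over $k$ (empty if $P$ is anisotropic) with the $\kbar$-decomposition, but this sentence plays no role in the Witt-index argument you actually run in the final paragraph, which is correct.
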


\begin{proof}
$(1\Rightarrow 2)$ This is Proposition \ref{Phidet-MQ}(2).

$(2\Rightarrow 1)$  Since $\ddet(P_{\kbar})=\ddet(Q_{\kbar})$, we obtain that $\ddim(P)=\ddim(Q)$.
It follows from 2) and Theorem \ref{relations} that $M(P)\stackrel{T}{\sim} M(Q)$. Since $T$-equivalence preserves the
rank of anisotropic summand, and the total rank is the same (since dimensions are the same), we obtain that
the number of Tate-summands in $M(P_E)$ and $M(Q_E)$ is the same, hence, $i_W(Q_E)=i_W(P_E)$, for all $E/k$.
By the criterion of motivic equivalence, $M(P)\cong M(Q)$.
\Qed
\end{proof}

Let $\{X_l\}_l$ be the collection of all anisotropic quadratic Grassmannians for all quadrics from the set $\{P_i\}_i$
(considered with multiplicities), and
$\{Y_r\}_r$ be the collection of all anisotropic quadratic Grassmannians for all quadrics from the set $\{Q_j\}_j$.
Let $\{\hii_{X_l}\}_l$ be the collection of the motives of the respective \v{C}ech simplicial schemes $\check{C}ech(X_l)$
(again, with multiplicity),
and similarly for $\{\hii_{Y_r}\}_r$.
Repeating the arguments of the proof of \cite[Proposition 5.1]{IMQ} we obtain:

\begin{prop}
\label{hii-3}
The conditions $(1)$ and $(2)$ of Theorem \ref{relations} are equivalent to:
\begin{itemize}
\item[$(3)$ ] $\{\hii_{X_l}\}_l\cong\{\hii_{Y_r}\}_r$.
\end{itemize}
\end{prop}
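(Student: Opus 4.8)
The plan is to reduce condition (3) to the already-established equivalence of (1) and (2) via Proposition \ref{dir-sum-facts}, following the template of \cite[Proposition 5.1]{IMQ}. The key observation is that, by part (3) of Proposition \ref{dir-sum-facts}, passing from an indecomposable anisotropic direct summand $N$ of the motive of a quadric to the motive $\hii_{X_N}$ of the \v{C}ech simplicial scheme of its associated Grassmannian is injective \emph{on isomorphism classes up to Tate-shift}: two such summands $N,L$ satisfy $N\cong L(a)[2a]$ for some $a$ if and only if $\hii_{X_N}\cong\hii_{X_L}$, and in the latter case the isomorphism is unique. So the multiset $\{\hii_{X_l}\}_l$ is nothing but a faithful encoding of the multiset of isomorphism-up-to-shift classes of anisotropic indecomposable summands appearing in $\oplus_i M(P_i)$, \emph{provided} we know that the multiplicity with which a given class occurs among the $X_l$ matches its multiplicity among the summands $N_l$.

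The first step is therefore to nail down that multiplicity matching. Fix an anisotropic quadric $R$ and an indecomposable summand $N$ of $M(R)$ whose lowest Tate-motive over $\kbar$ is $T(m)[2m]$; by definition $X_N=G(R,m)$. When we decompose $\oplus_i M(P_i)$ into Tates and anisotropic irreducibles $\oplus_l N_l$, each $N_l$ of class $[N]$ (up to shift) contributes exactly one copy of a Grassmannian stably bi-rationally equivalent to $X_N$ to the collection $\{X_l\}_l$, and conversely. By Proposition \ref{dir-sum-facts} the \v{C}ech motive only depends on the stable birational class, so the multiplicity of $[\hii_{X_N}]$ in $\{\hii_{X_l}\}_l$ equals the number of $l$ with $N_l\cong N(a)[2a]$. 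Hence $\{\hii_{X_l}\}_l\cong\{\hii_{Y_r}\}_r$ as multisets of objects (equivalently: there is a bijection of index sets respecting isomorphism) if and only if, for every isomorphism-up-to-shift class, the number of $N_l$'s in that class equals the number of $M_r$'s in that class — which is precisely the statement that the anisotropic indecomposable summands of $\oplus_i M(P_i)$ and of $\oplus_j M(Q_j)$ can be matched up to reordering and Tate-shift, i.e. $\oplus_i M(P_i)\stackrel{T}{\sim}\oplus_j M(Q_j)$, which is condition (2). Combined with the equivalence $(1)\Leftrightarrow(2)$ of Theorem \ref{relations}, this gives $(1)\Leftrightarrow(2)\Leftrightarrow(3)$.

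The main obstacle I expect is a bookkeeping subtlety rather than a conceptual one: one must be careful that the association $N\mapsto X_N$ is well-defined independently of which quadric $R$ one views $N$ as a summand of, and that it interacts correctly with Tate-shifts (shifting $N$ by $(a)[2a]$ shifts the index $m$ of the lowest Tate-motive, hence changes $G(R,m)$ to $G(R,m+a)$ — but these Grassmannians are stably birational, so $\hii_{X_N}$ is unchanged, which is exactly the content of Proposition \ref{dir-sum-facts}). The other point requiring care is the treatment of isotropic $P_i$ or $Q_j$ and of Tate summands: since $\stackrel{T}{\sim}$ and the passage to \v{C}ech motives both ignore Tate-motives (a split quadric contributes no anisotropic Grassmannian and $\hii$ of a variety with a rational point is $T$), one restricts attention to the anisotropic part throughout, and the argument of \cite[Proposition 5.1]{IMQ} goes through verbatim. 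No new input beyond Proposition \ref{dir-sum-facts} and Theorem \ref{relations} is needed.
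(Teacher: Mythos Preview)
Your argument rests on a misidentification of the collection $\{X_l\}_l$. In the paragraph preceding the proposition, $\{X_l\}_l$ is defined as the multiset of \emph{all} anisotropic quadratic Grassmannians $G(P_i,n)$ (one for every lower Tate-motive in the anisotropic part of each $M(P_i)$, counted with multiplicity), not the smaller multiset $\{X_{N_l}\}_l$ of Grassmannians attached to the \emph{lowest} Tate-motive of each indecomposable summand. Your central multiplicity claim---that each indecomposable $N_l$ ``contributes exactly one copy of a Grassmannian stably bi-rationally equivalent to $X_N$ to the collection $\{X_l\}_l$''---is therefore false as soon as some $N_l$ has rank larger than~$2$: an indecomposable with $s$ lower Tate-motives contributes $s$ Grassmannians to $\{X_l\}_l$, and for $s>1$ their \v{C}ech motives are pairwise non-isomorphic. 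Indeed, if $N_l\subset M(P_i)$ has lower Tates at positions $m_1<m_2$, then over $k(G(P_i,m_1))$ exactly one lower and one upper Tate split from $N_l$ (see the parenthetical remark in the proof of Theorem~\ref{relations}, invoking \cite[Theorem~4.17]{lens}); hence $G(P_i,m_1)$ acquires a rational point there while $G(P_i,m_2)$ does not, so $\hii_{G(P_i,m_1)}\not\cong\hii_{G(P_i,m_2)}$. Since indecomposable summands of rank $>2$ certainly occur in the motives of quadrics, the multiset $\{\hii_{X_l}\}_l$ is in general strictly richer than $\{\hii_{X_{N_l}}\}_l$, and your reduction of condition~(3) to a tautology via Proposition~\ref{dir-sum-facts} fails in both directions.

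The paper supplies the missing structural input via \cite[Theorems~3.1 and~3.7]{IMQ}: each anisotropic indecomposable $N_l$ is itself an extension of shifted $\hii$'s, one for every Tate-motive in $(N_l)_{\kbar}$, and this entire packet of $\hii$'s is uniquely determined by the shift-class of $N_l$. Thus the doubled multiset ``$2$''$\{\hii_{X_l}\}_l$ decomposes canonically as a disjoint union of packets indexed by the $N_l$'s, which gives $(2)\Rightarrow(3)$ at once. For $(3)\Rightarrow(2)$ one matches the lowest $\hii$ of some $N_l$ with an element of $\{\hii_{Y_r}\}_r$, uses \cite[Theorem~4.17]{lens} to produce a shift-isomorphic summand on the $Q$-side, and then cancels \emph{both} the matched summands \emph{and} their full $\hii$-packets from the two multisets before iterating. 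It is precisely this packet-level bookkeeping---not merely the lowest $\hii_{X_{N_l}}$---that your argument is missing.
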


\begin{proof}
Note, that by \cite[Theorem 3.1]{IMQ}, the anisotropic part of $\oplus_iM(P_i)$ is an extension of (shifted) $\hii$'s from
the union of the two copies of the set $\{\hii_{X_l}\}_l$ (corresponding to the "upper" and "lower" Tate-motives, respectively)
which I will denote "2"$\{\hii_{X_l}\}_l$, and similarly for the RHS.

Let $N_l$ be some indecomposable (anisotropic) irreducible summand of the $\oplus_{i}M(P_i)$, and $X_l$ be the respective
Grassmannian (corresponding to the
lowest Tate-motive in $N_l$). Then (3) implies that $\hii_{X_l}$ is isomorphic to some $\hii_{Y_r}$, and
it follows from \cite[Theorem 4.17]{lens} that $\oplus_{j}M(Q_j)$ contains an irreducible summand
isomorphic up to shift to $N_l$. By \cite[Theorem 3.1, Theorem 3.7]{IMQ}, $N_l$ is an extension of (shifted) motives of
\v{C}ech simplicial schemes
of Grassmannians and the respective $\hii$'s are determined uniquely by $N_l$ (since such a $\hii$ is trivial if and only if the respective
Tate-motive splits from $N_l$, and this information determines $\hii$ by \cite[Theorem 2.3.4]{IMQ}). Hence, we can identify the respective subsets in "2"$\{\hii_{X_l}\}_l$ and "2"$\{\hii_{Y_r}\}_r$.
Canceling $N_l$'s on both sides as well as the mentioned subsets, we reduce to smaller identical collections in "2"(3) and to shorter
sums of indecomposables in (2). Continuing this way, we cancel all the indecomposables in (2). This shows that (3) implies (2).
The converse is clear, since $N_l$ is an extension of $\hii$'s which are uniquely determined by $N_l$ as explained above. Thus, (2)
implies that "2"$\{\hii_{X_l}\}_l$ $\cong$ "2"$\{\hii_{Y_r}\}_r$.
\Qed
\end{proof}

Denote as $\cn(k)$ the set of isomorphism (up to shift) classes of indecomposable anisotropic direct summands in
the motives of smooth projective quadrics over $k$. Then Theorem \ref{relations} shows that we have an embedding of
$\picq/\ttt$ into a free abelian group generated by the set $\cn(k)$:
$$
\xymatrix{
\picq/\ttt\,\, \ar@{^{(}->}[r] & \,\oplus_{N\in\cn(k)}\zz\cdot\ddet(N)
}
$$
mapping $\ddet(Q)$ to the sum of determinants of anisotropic irreducible summands $M(Q)$ is a direct sum of (Tate-summands are ignored). Indeed,
since $\picq/\ttt$ is a quotient of a free abelian group generated by $\ddet(Q)$ where $Q$ runs over (isomorphism classes of)
smooth projective quadrics over $k$ modulo relations in (1) of Theorem \ref{relations}, this theorem
(together with Definition \ref{T-equiv} and the fact that $\ddet$ is stable under Tate-shift)
ensures that the map is well-defined and injective, since $\prod_i\ddet(P_i)/\prod_j\ddet(Q_j)=1$ in $\picq/\ttt$ if and only if
the respective element $\sum_{l}\ddet(N_l)-\sum_{r}\ddet(M_r)$ is zero in $\oplus_{N\in\cn(k)}\zz\cdot\ddet(N)$.
Note also that our map maps $\ddet(Q)$ the same way as the determinant of its anisotropic part. So, we get:

\begin{cor}
\label{descr-picq}
The group $\picq/\ttt$ is isomorphic to the image of the map:
$$
\ffi:\displaystyle\oplus_{Q\in\cq}\zz\cdot\ddet(Q)\lrow\oplus_{N\in\cn(k)}\zz\cdot\ddet(N),
$$
where $\cq$ is the set of isomorphism classes of smooth anisotropic projective quadrics over $k$.
In particular, $\picq$ is a free abelian group.
\end{cor}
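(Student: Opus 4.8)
The plan is mostly bookkeeping: the Corollary is a formal consequence of Theorem \ref{relations} and of the embedding $\picq/\ttt\hookrightarrow\oplus_{N\in\cn(k)}\zz\cdot\ddet(N)$ constructed in the paragraph immediately preceding it. First I would recall that $\picq/\ttt$ is generated, as an abelian group, by the classes $\ddet(Q)$ with $Q$ running over all smooth projective quadrics over $k$ — this was noted right after Definition \ref{def-det}, using $e^{Q\backslash P}=\ddet(Q)/\ddet(P)$ and $e^q=e^{Q'\backslash Q}$. The one point that needs an argument is that one may restrict to \emph{anisotropic} quadrics: writing $q_{anis}$ for the anisotropic kernel of $q$ and $Q_{anis}$ for the associated projective quadric, the motive $M(Q)$ differs from $M(Q_{anis})$ only by Tate-summands and a single overall Tate-twist, so $M(Q)\stackrel{T}{\sim}M(Q_{anis})$ and Theorem \ref{relations} yields $\ddet(Q)=\ddet(Q_{anis})\in\picq/\ttt$ whenever $\ddim q_{anis}\geq 2$; in the degenerate cases ($q$ hyperbolic, or $\ddim q_{anis}=1$) the motive $M(Q)$ is a sum of Tate-motives and $\ddet(Q)$ already lies in $\ttt$, so it contributes nothing. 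Thus there is a surjection $\oplus_{Q\in\cq}\zz\cdot\ddet(Q)\twoheadrightarrow\picq/\ttt$.

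Next I would compose this surjection with the embedding $\picq/\ttt\hookrightarrow\oplus_{N\in\cn(k)}\zz\cdot\ddet(N)$ mentioned above (whose injectivity is precisely the implication $(1)\Rightarrow(2)$ of Theorem \ref{relations}). Both maps send $\ddet(Q)$ to $\sum_l\ddet(N_l)$, where $M(Q)=(\oplus Tates)\oplus\oplus_lN_l$ is the decomposition into Tates and anisotropic indecomposables, so the composite is exactly the map $\ffi$ of the statement. Hence the image of $\ffi$ coincides with the image of the embedding, and since the embedding is injective this image is isomorphic to $\picq/\ttt$. This gives the first assertion.

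For the last assertion, the isomorphism $\picq\stackrel{\cong}{\lrow}\ttt\times(\picq/\ttt)$ from Section \ref{structure-picq}, together with $\ttt\cong\zz\times\zz$, gives $\picq\cong\zz^2\oplus\op{Im}(\ffi)$. Since $\op{Im}(\ffi)$ is a subgroup of the free abelian group $\oplus_{N\in\cn(k)}\zz\cdot\ddet(N)$ it is itself free abelian, and hence so is $\picq$. The only genuinely non-formal ingredient is the reduction to anisotropic quadrics in the first paragraph; I expect that to be the main (but minor) obstacle, and it is handled immediately once one notes that replacing $q$ by its anisotropic kernel changes the motive of the quadric only by Tate-summands and a Tate-twist, so that the $T$-equivalence of Definition \ref{T-equiv} is preserved and Theorem \ref{relations} applies.
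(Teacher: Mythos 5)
Your proposal is correct and follows essentially the same route as the paper: you use the generation of $\picq/\ttt$ by $\ddet(Q)$'s, the embedding into $\oplus_{N\in\cn(k)}\zz\cdot\ddet(N)$ supplied by Theorem \ref{relations}, and the observation that $\ddet(Q)=\ddet(Q_{anis})$ in $\picq/\ttt$ (equivalently, that the embedding maps $\ddet(Q)$ the same way as the determinant of its anisotropic part) to restrict the generating set to anisotropic quadrics. Your explicit justification of that last point via $M(Q)\stackrel{T}{\sim}M(Q_{anis})$ and the handling of degenerate cases is exactly the content the paper compresses into one sentence.
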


It is an interesting question, if $\ffi$ is surjective, or not. It is related to an old and non-trivial
motivic question:

\begin{que} {\rm (\cite[Question 4.16]{lens})}
\label{old-question}
Let $Q$ be a smooth projective quadric and $N$ be an indecomposable direct summand in $M(Q)$.
Is it true that there exists a smooth projective quadric $P$ over $k$ with direct summand $M$ of $M(P)$,
such that $M$ is isomorphic to $N$ up to Tate-shift, and $M_{\kbar}$ contains $T$?
\end{que}

We get:

\begin{prop}
\label{conditional-sta}
Suppose, Question \ref{old-question} has a positive answer. Then
$$
\picq/\ttt\cong\oplus_{N\in\cn(k)}\zz\cdot\ddet(N).
$$
Alternatively, in $\picq/\ttt$ we can choose a basis $\{e^{q_i}\}_i$, where we take exactly one representative $q'_i$ in every
class of stable bi-rational equivalence of anisotropic quadratic forms over $k$.
\end{prop}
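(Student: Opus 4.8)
The plan is to derive both assertions from Corollary~\ref{descr-picq}, Theorem~\ref{Dedekind} and Proposition~\ref{inverse}; the genuinely new input is a well-founded induction in which the positive answer to Question~\ref{old-question} is used to realise every generator $\ddet(N)$, $N\in\cn(k)$, inside $\picq$. By Corollary~\ref{descr-picq} we have $\picq/\ttt\cong\operatorname{Im}(\ffi)\subseteq\oplus_{N\in\cn(k)}\zz\cdot\ddet(N)$, so the first assertion is precisely the surjectivity of $\ffi$, i.e.\ that $\ddet(N)\in\operatorname{Im}(\ffi)$ for every $N\in\cn(k)$. For $N\in\cn(k)$ normalise the representative so that the smallest Tate motive of $N_{\kbar}$ is $T(0)[0]$, and let $h(N)\geq 0$ be the largest $h$ with $T(h)[2h]$ occurring in $N_{\kbar}$ (its ``height''); I induct on $h(N)$. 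The base case $h(N)=0$ forces $N_{\kbar}$ to be a sum of copies of $T(0)[0]$, which for an indecomposable anisotropic summand of a quadric motive forces $N\cong M(\operatorname{Spec}L)$ with $L/k$ a quadratic field extension, and then $\ffi(\ddet(\operatorname{Spec}L))=\ddet(N)$.

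For the inductive step I first want an anisotropic quadric $P$ whose indecomposable summand $U(P)$ of $M(P)$ containing $T(0)[0]$ (over $\kbar$) is isomorphic to $N$ up to Tate-shift, \emph{with $\dim P=h(N)$}. Question~\ref{old-question} provides some anisotropic quadric with this upper motive; by Proposition~\ref{dir-sum-facts} its stable birational class is the class of $X_N$, and the standard reductions of generic-splitting theory (a Pfister neighbour when $N$ is a Rost-motive, and the analogous ``leading form'' compression in general) produce a representative $P$ of the smallest possible dimension, which is $h(N)$ — the inequality $\dim P\geq h(N)$ being forced by $N\hookrightarrow M(P)$. Write $M(P)=U(P)\oplus\bigoplus_l L_l$ (no Tate summands, $P$ being anisotropic). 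Since $\dim P=h(N)$, the motive $M(P)_{\kbar}$ occupies Tate-degrees $[0,h(N)]$, the extremal degrees $0$ and $h(N)$ occurring once and lying in $U(P)$; hence every $L_l$ has all its $\kbar$-Tate motives in $[1,h(N)-1]$, so after normalisation $h(L_l)\leq h(N)-2<h(N)$ and in particular no $L_l$ is a Tate-shift of $N$ (this is exactly what choosing $\dim P=h(N)$ buys: it kills the Pfister-type multiplicities of Example~\ref{Pfister-decomp}). By the inductive hypothesis $\ddet(L_l)\in\operatorname{Im}(\ffi)$, while Proposition~\ref{Phidet-MQ}(1) together with the multiplicativity of $\tfrac{\Phi^E}{\Phi^k}(\ddet(-))$ gives $\ffi(\ddet(P))=\ddet(N)+\sum_l\ddet(L_l)$; hence $\ddet(N)\in\operatorname{Im}(\ffi)$. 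Thus $\ffi$ is onto, $\picq/\ttt\cong\oplus_{N\in\cn(k)}\zz\cdot\ddet(N)$, and in particular $\picq$ is free abelian.

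For the alternative description, linear independence of $\{e^{q_i}\}_i$ is immediate from Theorem~\ref{Dedekind}, the $q'_i$ being anisotropic and, as representatives of distinct stable birational classes, pairwise not stably birationally equivalent. For spanning, recall $(e^{q_i})^{-1}=e^{q'_i}$ in $\picq/\ttt$ by Proposition~\ref{inverse}, so it suffices that $\{e^{q'_i}\}_i$ generates. Under the hypothesis of Question~\ref{old-question}, Proposition~\ref{dir-sum-facts} identifies $\cn(k)$ with the set of stable birational classes of anisotropic quadratic Grassmannians, and each such Grassmannian is stably birational to a quadric (namely $X_{U(P)}=P$), so $\cn(k)$ is in bijection with the index set, say $q'_i\leftrightarrow N_i$. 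It remains to see that $\{e^{q'_i}\}_i$ and $\{\ddet(N)\}_{N\in\cn(k)}$ generate the same subgroup of $\oplus_N\zz\cdot\ddet(N)$: one computes $\ffi(e^{q'_i})=\ffi(\ddet(Q_{\la 1\ra\perp-q'_i}))-\ffi(\ddet(Q'_i))$ through Corollary~\ref{descr-picq} (the worked case being Example~\ref{Pfister-decomp}, where it comes out as $-\ddet(N_i)$) and checks that the resulting $|I|\times|I|$ transition matrix between the two families is unimodular; for a suitable ordering of $\cn(k)$ — the stable birational splitting preorder, $N\preceq M$ iff $X_N$ acquires a point over $k(X_M)$, refined to a well-order — it is triangular with diagonal entries $\pm1$. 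Combined with the previous paragraph this yields $\oplus_i\zz\cdot e^{q'_i}=\picq/\ttt$, so $\{e^{q_i}\}_i$ is a $\zz$-basis.

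The main obstacle is twofold and of the same nature in both parts: (i) knowing that a class $N\in\cn(k)$ is the upper motive of an anisotropic quadric of dimension \emph{exactly} $h(N)$ (the compression step), and (ii) controlling, inside those tight realisations, the heights and multiplicities of the non-upper indecomposable summands, together with the resulting triangularity of $\ffi(e^{q'_i})$. Both rest on the structure theory of motivic decompositions of quadrics from \cite{IMQ} and \cite{lens} — in particular that over its own function field the motive of an anisotropic quadric splits off a binary Tate part together with the motive of a strictly lower-dimensional anisotropic kernel, the self-duality (up to Tate-shift) of the indecomposable summands, and their identification via the \v{C}ech motives $\hii$ (\cite[Thm~2.3.4, Thm~3.1]{IMQ}, \cite[Thm~4.17, Thm~4.19]{lens}).
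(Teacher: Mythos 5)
Your first part (surjectivity of $\ffi$) is a valid variation of the paper's argument. Where the paper works directly with the chosen representative $Q'_i$ and invokes the ``outer shell is covered by copies of $N$'' statements of \cite[Thm.~4.13, Cor.~4.14]{lens} to conclude that all non-upper indecomposable summands of $M(Q'_i)$ and $M(Q_i)$ have strictly smaller $size$, you first compress (via \cite[Cor.~4.9, Prop.~4.5, Thm.~4.13]{lens}) to an anisotropic quadric $P$ with $\dim P=size(N)$ and $i_1=1$, and then the size bound on the remaining summands falls out of a weight count on the extremal degrees $0$ and $\dim P$. Both routes rest on the same lemmas from \cite{lens}, so this half is essentially the paper's argument in a slightly repackaged form; the base case and the well-foundedness of the induction on $size$ are fine.

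The gap is in the alternative-basis half. You assert that the change-of-basis matrix from $\{\ddet(N)\}_{N\in\cn(k)}$ to $\{\ffi(e^{q'_i})\}_i$ is triangular with respect to the ``stable birational splitting preorder'' $N\preceq M\Leftrightarrow X_N$ acquires a rational point over $k(X_M)$. That claim is false. A non-upper anisotropic indecomposable summand $L$ of $M(Q'_i)$ has attached Grassmannian $X_L=G(Q'_i,m_L)$ with $m_L\geq 1$, and $X_L$ acquires a point over $k(Q'_i)=k(X_{N_i})$ if and only if $i_1(q'_i)>m_L$; whenever $i_1(q'_i)=1$ (which you can even force by taking the compressed representatives), \emph{no} non-upper summand satisfies this, so $L\not\prec N_i$ and the off-diagonal entries of your transition matrix sit on the wrong side. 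Moreover it is not clear that this preorder is well-founded in the first place. The ordering that works is $size$, and triangularity with respect to $size$ is precisely the outer-shell fact already used in part one: $\ffi(e^{q_i})=\ddet(N_i)+(\text{terms of strictly smaller }size)$. The paper does not separate the two assertions but runs a single induction on $size$ that produces $\ffi$-surjectivity and the spanning of $\{e^{q_i}\}_i$ simultaneously from this identity. Replacing your ordering by $size$ (and thereby merging the two halves back into one induction) closes the gap.
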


\begin{proof}
If the Question \ref{old-question} has a positive answer, then any $N\in\cn(k)$ will be represented by
a direct summand in $M(Q'_i)$, for some $i$, a direct summand starting from $T$ (over $\kbar$).
By \cite[Thm 4.13]{lens} (taking into account \cite[Cor 4.4]{lens} and Proposition \ref{dir-sum-facts})
the multiplicity (up to shift) of such a summand in
$M(Q'_i)$ is equal to the 1-st higher Witt index $i_1(q'_i)$ of our quadric - see \cite[Sect. 7]{lens}.
Since $Q_i$ is a subquadric of co-dimension 1 in $Q'_i$,
by \cite[Cor 4.9]{lens}, the multiplicity of $N$ in $M(Q'_i)$ will be exactly 1 more than that in $M(Q_i)$.
Indeed, recall that $size(N)=\op{max}(i|\Ch^i(N_{\kbar})\neq 0)-
\op{min}(i|\Ch^i(N_{\kbar})\neq 0)$ - \cite[Def. 4.6]{lens}. Then, either $i_1(q'_i)=1$, in which
case, $size(N)=\ddim(Q'_i)$ - \cite[Prop 4.5]{lens} and $M(Q_i)$ does not contain such summands at all, or $i_1(q'_i)>1$,
in which case, $N$ is also a direct summand of $M(Q_i)$ of multiplicity $i_1(q_i)=i_1(q'_i)-1$. Also, by the same results,
the remaining indecomposable direct summands of $M(Q_i)$ and $M(Q'_i)$ will be of strictly smaller
size. Indeed, the "outer shell" (all the Tate-motives split over the generic point) of $M(Q'_i)$ as well as that of
$M(Q_i)$ (if $i_1(q'_i)>1$) are completely covered by the copies of $N$, by \cite[Thm. 4.13]{lens}.
Hence, remaining direct summands start and end in "higher shells",
and so, have smaller $size$ - \cite[Cor. 4.14]{lens}.
Hence, $\ddet(Q'_i)=\ddet(N)^{i_1(q'_i)}\cdot \ddet(\text{of smaller motives})$, while
$\ddet(Q_i)=\ddet(N)^{i_1(q'_i)-1}\cdot \ddet(\text{of smaller motives})$ and so,
$e^{q_i}/\ddet(N)=\ddet(Q'_i)/(\ddet(Q_i)\cdot\ddet(N))$ is expressible in terms of $\ddet(M)$, for $size(M)<size(N)$.
By induction on $size(N)$, we get that the map $\ffi$ is surjective and the collection $\{e^{q_i}\}_i$ generates $\picq/\ttt$.
The fact that it is linearly independent follows from Theorem \ref{Dedekind}.
\Qed
\end{proof}

\begin{exa}
\label{real}
Let $k=\rr$. Then the only indecomposable anisotropic direct summands in the motives of real quadrics are Rost motives corresponding
to the pure symbols $\{-1\}^r,\,r\in\nn$, and so, it follows from Example \ref{Pfister-decomp} that the map $\ffi$ is surjective
and
\begin{equation*}
(\picq/\ttt)(\rr)=
\operatornamewithlimits{\oplus}_{r\in\nn}\zz\cdot e^{\lva -1\rva^{r}}.
\end{equation*}
\end{exa}

\section{An alternative to the method of Bachmann}
\label{alternative}

In this section I present an approach which can serve as an alternative to the one used by T.Bachmann.
Here I instead employ some ideas of \cite{IMQ}.
This permits to look at the same questions from a slightly different point of view. Still, there are certain
similarities in both approaches. At the same time, the new method is applicable not just to the category generated by the motives
of quadrics, but to arbitrary geometric motives and can be used for the study of the whole $\op{Pic}$ group of $\dmgmk$.
This approach in the end leads to the "motivic category of an extension" which has many remarkable properties and
permits to study Voevodsky motives "locally". It will be discussed in details in a separate paper.

Let $\dmkF{F}$ be Voevodsky triangulated category of motives with coefficients in an arbitrary commutative (unital) ring $F$.
Let $Q$ be smooth projective variety (not necessarily connected) over $k$. Let $\check{C}ech(Q)$ be the respective
\v{C}ech simplicial scheme, where $(\check{C}ech(Q))_n=Q^{\times n+1}$ with faces and degeneracy maps being partial diagonals
and partial projections. Denote its motive as $\hii_{Q}$.
We get the natural projection $\check{C}ech(Q)\row\op{Spec}(k)$, which gives a distinguished triangle $\Delta_Q$ in $\dmkF{F}$:
\begin{equation}
\label{dtQ}
\hii_{Q}\lrow T\lrow \whii_{Q}\lrow\hii_{Q}[1].
\end{equation}
Motives $\hii_{Q}$ and $\whii_{Q}$ are mutually orthogonal idempotents in $\dmkF{F}$:
$$
\hii_{Q}^{\otimes 2}\stackrel{\cong}{\row}\hii_{Q};\hspace{5mm}  \whii_{Q}^{\otimes 2}\stackrel{\cong}{\low}\whii_{Q};
\hspace{5mm}\hii_{Q}\otimes\whii_{Q}\cong 0.
$$
Denote as $\pi_Q:\dmkF{F}\row\dmkF{F}$ and $\widetilde{\pi}_Q:\dmkF{F}\row\dmkF{F}$ the projection functors given by $\otimes\hii_{Q}$
and $\otimes\whii_{Q}$, respectively. Then the image of $\pi_Q$ is the full localizing subcategory $\dmkQF{Q}{F}$ consisting of
objects which are stable under $\otimes\hii_{Q}$, while the image of $\widetilde{\pi}_Q$ is the full localizing subcategory
$\dmkQF{\widetilde{Q}}{F}$ consisting of objects which are killed by $\otimes\hii_{Q}$.
It follows from \cite[Thm 2.3.2]{IMQ} (which is, basically, \cite[Lem 4.9]{Vo-BKMK}) that
\begin{equation}
\label{polu-ort}
\Hom_{\dmkF{F}}(U,V)=0,\hspace{3mm}\text{for any}\,\,
U\in Ob(\dmkQF{Q}{F})\,\,\text{ and }\,\, V\in Ob(\dmkQF{\widetilde{Q}}{F}).
\end{equation}
At the same time, any object $W$ in $\dmkF{F}$
has a functorial decomposition:
$$
\pi_Q(W)\lrow W\lrow\widetilde{\pi}_{Q}(W)\lrow\pi_Q(W)[1].
$$
If $P$ and $Q$ are two smooth projective varieties over $k$, then there are natural identifications:
$$
\hii_{P\times Q}\stackrel{\cong}{\row}\hii_{P}\otimes\hii_{Q};\hspace{5mm}
\whii_{P\sqcup Q}\stackrel{\cong}{\low}\whii_{P}\otimes\whii_{Q}.
$$
The respective functors $\pi_Q,\widetilde{\pi}_Q,\pi_P,\widetilde{\pi}_P$ commute up to isomorphism, and
we have identifications of functors: $\pi_{Q\times P}\cong\pi_{Q}\circ\pi_{P}$ and
$\widetilde{\pi}_{Q\sqcup P}\cong\widetilde{\pi}_Q\otimes\widetilde{\pi}_P$.
It is also worth recalling (\cite[Thm 2.3.4]{IMQ}) that there is an identification
$\hii_{Q}\cong\hii_{P}$ if and only if $P$ has a zero-cycle of degree 1 over every generic point of $Q$, and vice-versa.

Let now ${\mathbf X}=\{X_i\}_{i\in I}$ be some finite collection of smooth projective varieties.
For any subset $J\subset I$ denote:
$$
\hii_{{\mathbf X}_J}:=\left(\otimes_{i\in J}\hii_{X_i}\right)\otimes\left(\otimes_{i\not\in J}\whii_{X_i}\right).
$$
Note, that these are still idempotents: $\hii_{{\mathbf X}_J}^{\otimes 2}\cong\hii_{{\mathbf X}_J}$.
Denote as $\pi_{{\mathbf X}_J}$ the respective projector. Its image is a full localizing subcategory $\dmkQF{{\mathbf X}_J}{F}$
of $\dmkF{F}$ made out of objects which are stable under $\otimes\hii_{X_i}$, for $i\in J$, and are killed by
$\otimes\hii_{X_i}$, for $i\not\in J$.

Tensoring the distinguished triangles $\Delta_{X_i}$ from (\ref{dtQ}) for every $X_i,\,i\in I$ we obtain what I will call a
{\it distinguished poly-triangle} $\Delta_{{\mathbf X}}$.
It presents the unit $T$ of the tensor structure of $\dmkF{F}$ as an extension of graded pieces $\hii_{{\mathbf X}_J}$, for all
$J\subset I$.
Note, that it follows from (\ref{polu-ort}) that, for $J_1,J_2\subset I$, we have:
\begin{equation}
\label{polu-poly-ort}
\Hom_{\dmkF{F}}(U,V)=0,\,\,\text{for}\,\,
U\in Ob(\dmkQF{{\mathbf X}_{J_1}}{F}),\,V\in Ob(\dmkQF{{\mathbf X}_{J_2}}{F}),\,\,\,\text{if}\,\,J_1\not\subset J_2.
\end{equation}
As any object $W$ of $\dmk$ is an extension of $\pi_{{\mathbf X}_J}(W)$, for $J\subset I$, we get:

\begin{prop}
\label{cons-piJ}
The functor:
$$
\times_{J\subset I}\pi_{{\mathbf X}_J}:\dmkF{F}\lrow\times_{J\subset I}\dmkQF{{\mathbf X}_{J}}{F}
$$
is conservative.
\end{prop}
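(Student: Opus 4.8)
The plan is to deduce conservativity directly from the fact that $T$ is built as an iterated extension of the pieces $\hii_{\mathbf{X}_J}$, together with the orthogonality relation (\ref{polu-poly-ort}). Concretely, suppose $f\colon W\to W'$ is a morphism in $\dmkF{F}$ such that $\pi_{\mathbf{X}_J}(f)$ is an isomorphism for every $J\subset I$; I want to conclude that $f$ itself is an isomorphism. Since $\dmkF{F}$ is triangulated, it suffices to show that $\cone(f)\cong 0$. Now $\cone(\pi_{\mathbf{X}_J}(f))\cong \pi_{\mathbf{X}_J}(\cone(f))$ because $\pi_{\mathbf{X}_J}$ is an exact functor (it is $\otimes\hii_{\mathbf{X}_J}$, and tensoring is triangulated), so the hypothesis says $\pi_{\mathbf{X}_J}(\cone(f))\cong 0$ for all $J$. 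Thus it is enough to prove the following: if $W$ is an object of $\dmkF{F}$ with $\pi_{\mathbf{X}_J}(W)\cong 0$ for all $J\subset I$, then $W\cong 0$.

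First I would record the key structural input already assembled in the excerpt: tensoring the triangles $\Delta_{X_i}$ exhibits $T$ as an extension whose graded pieces are exactly the $\hii_{\mathbf{X}_J}$, $J\subset I$. Tensoring that poly-triangle with $W$ exhibits $W\cong W\otimes T$ as an iterated extension of the objects $\pi_{\mathbf{X}_J}(W)=W\otimes\hii_{\mathbf{X}_J}$. If each such piece is zero, then $W$ is an iterated extension of zero objects, hence zero. To make the "iterated extension" argument precise I would induct on $|I|$: for $I=\{i\}$ the triangle $\pi_{X_i}(W)\to W\to\widetilde\pi_{X_i}(W)\to\pi_{X_i}(W)[1]$ gives $W\cong 0$ once both outer terms vanish; for the inductive step, fix one index $i_0\in I$, set $I'=I\setminus\{i_0\}$, and apply the triangle for $X_{i_0}$ to $W$ to get $\pi_{X_{i_0}}(W)\to W\to \widetilde\pi_{X_{i_0}}(W)\to$. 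The object $\pi_{X_{i_0}}(W)$ lies in $\dmkQF{X_{i_0}}{F}$ and, via the commuting projectors $\{\pi_{\mathbf{X}'_{J'}}\}_{J'\subset I'}$, its $\mathbf{X}'$-pieces are precisely the $\pi_{\mathbf{X}_J}(W)$ with $i_0\in J$; similarly the $\mathbf{X}'$-pieces of $\widetilde\pi_{X_{i_0}}(W)$ are the $\pi_{\mathbf{X}_J}(W)$ with $i_0\notin J$. By the induction hypothesis applied to $\mathbf{X}'$, both $\pi_{X_{i_0}}(W)$ and $\widetilde\pi_{X_{i_0}}(W)$ vanish, hence $W\cong 0$.

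The main point requiring care — and the step I expect to be the real obstacle — is the bookkeeping that identifies the $\mathbf{X}'$-graded pieces of $\pi_{X_{i_0}}(W)$ and $\widetilde\pi_{X_{i_0}}(W)$ with the appropriate $\pi_{\mathbf{X}_J}(W)$, i.e. that the projectors genuinely commute up to coherent isomorphism (the functor identities $\pi_{Q\times P}\cong\pi_Q\circ\pi_P$ and $\widetilde\pi_{Q\sqcup P}\cong\widetilde\pi_Q\otimes\widetilde\pi_P$ recalled above), so that $\pi_{\mathbf{X}_J}=\pi_{X_{i_0}}\circ\pi_{\mathbf{X}'_{J\setminus\{i_0\}}}$ when $i_0\in J$ and $\pi_{\mathbf{X}_J}=\widetilde\pi_{X_{i_0}}\circ\pi_{\mathbf{X}'_{J}}$ when $i_0\notin J$. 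Granting these identifications, everything else is the routine triangulated-category manipulation sketched above. Note that the orthogonality (\ref{polu-poly-ort}) is not strictly needed for conservativity per se, but it guarantees that the poly-triangle $\Delta_{\mathbf{X}}$ is an honest semi-orthogonal filtration, which is what makes the inductive extraction of graded pieces well-behaved; I would invoke it to justify that the filtration steps are canonical.
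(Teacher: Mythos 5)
Your argument is correct and matches the paper's proof in essence: the proposition follows at once from the fact, recorded immediately before it in the text, that any object $W$ is an iterated extension of the pieces $\pi_{\mathbf{X}_J}(W)$ obtained by tensoring the poly-triangle $\Delta_{\mathbf{X}}$ with $W$. You have simply made explicit, via induction on $|I|$ and the commutation of the projectors (which is immediate from the tensor-product definition of $\hii_{\mathbf{X}_J}$), the routine triangulated step that the paper leaves implicit.
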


Since $\pi_{{\mathbf X}_J}$ is symmetric monoidal, it preserves duals. Hence, using Proposition \ref{cons-piJ},
we obtain:

\begin{prop}
\label{dist-pic-XJ}
The functor:
$$
\times_{J\subset I}\pi_{{\mathbf X}_J}:\dmgmkF{F}\lrow\times_{J\subset I}\dmkQF{{\mathbf X}_{J}}{F}
$$
detects invertible objects.
\end{prop}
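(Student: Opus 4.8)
The plan is to reduce the statement to a single morphism being an isomorphism and then to test that morphism through each projector $\pi_{{\mathbf X}_J}$. The two ingredients are: first, that $\dmgmkF{F}$ is rigid (Voevodsky, \cite{voevodsky-triang-motives}), so every $M\in\dmgmkF{F}$ is strongly dualizable, with dual $M^\vee=\Homi(M,T)$, evaluation $\op{ev}_M\colon M^\vee\otimes M\row T$ and coevaluation, and $M$ is invertible if and only if $\op{ev}_M$ is an isomorphism; second, that by Proposition \ref{cons-piJ} the family $\{\pi_{{\mathbf X}_J}\}_{J\subset I}$ is conservative. So it suffices to prove: if $\pi_{{\mathbf X}_J}(M)$ is invertible for every $J\subset I$, then $\pi_{{\mathbf X}_J}(\op{ev}_M)$ is an isomorphism for every $J$.

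First I would record the monoidal bookkeeping. From the relations $\hii_{X_i}^{\otimes 2}\cong\hii_{X_i}$, $\whii_{X_i}^{\otimes 2}\cong\whii_{X_i}$ and $\hii_{X_i}\otimes\whii_{X_i}\cong 0$ one sees that each $\hii_{{\mathbf X}_J}$ is an idempotent, that the localizing subcategory $\dmkQF{{\mathbf X}_J}{F}$ is closed under $\otimes$ and forms a tensor triangulated category with unit $\hii_{{\mathbf X}_J}=\pi_{{\mathbf X}_J}(T)$, and that $\pi_{{\mathbf X}_J}=-\otimes\hii_{{\mathbf X}_J}\colon\dmkF{F}\row\dmkQF{{\mathbf X}_J}{F}$ is a symmetric monoidal triangulated functor. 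Duals, evaluation maps and invertibility inside $\dmkQF{{\mathbf X}_J}{F}$ are always meant with respect to this monoidal structure.

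Then I would apply $\pi_{{\mathbf X}_J}$ to $\op{ev}_M$. Since $\pi_{{\mathbf X}_J}$ is symmetric monoidal, it carries the dualizable object $M$ (dualizable already in the ambient category $\dmkF{F}$, with the same dual $M^\vee$) to a dualizable object of $\dmkQF{{\mathbf X}_J}{F}$ with dual $\pi_{{\mathbf X}_J}(M^\vee)$, and it carries $\op{ev}_M$ to the evaluation map of $\pi_{{\mathbf X}_J}(M)$. If $\pi_{{\mathbf X}_J}(M)$ is invertible, that map is an isomorphism; hence $\pi_{{\mathbf X}_J}(\op{ev}_M)$ is an isomorphism for every $J$. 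By conservativity (Proposition \ref{cons-piJ}), $\op{ev}_M$ is an isomorphism, so $M$ is invertible in $\dmgmkF{F}$. The converse implication — that an invertible $M$ has invertible image under each monoidal functor $\pi_{{\mathbf X}_J}$ — is immediate.

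The only real point of care, and so the main obstacle, is that $\pi_{{\mathbf X}_J}$ does not preserve geometricity: $\hii_{X_i}$ is in general not a compact object, so $\pi_{{\mathbf X}_J}(M)$ sits in the big category $\dmkQF{{\mathbf X}_J}{F}$ rather than in its geometric part. One must therefore be careful that ``dualizable'' and ``invertible'' for $\pi_{{\mathbf X}_J}(M)$ are read inside $\dmkQF{{\mathbf X}_J}{F}$ with its unit $\hii_{{\mathbf X}_J}$ — as arranged in the bookkeeping step — and that the elementary fact that a dualizable object with invertible evaluation map is invertible (and conversely) is applied there. With the monoidal structure and the monoidality of $\pi_{{\mathbf X}_J}$ in place, the rest is formal; rigidity of $\dmgmkF{F}$, which furnishes $M^\vee$ and $\op{ev}_M$, is quoted.
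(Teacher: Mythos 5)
Your argument is correct and follows essentially the same route as the paper, which observes in two sentences that $\pi_{{\mathbf X}_J}$ is symmetric monoidal (hence preserves duals and evaluation maps) and then invokes Proposition~\ref{cons-piJ}. You have filled in exactly the intended unpacking: rigidity of $\dmgmkF{F}$ supplies $M^\vee$ and $\op{ev}_M$; monoidality of $\pi_{{\mathbf X}_J}$ identifies $\pi_{{\mathbf X}_J}(\op{ev}_M)$ with the evaluation of $\pi_{{\mathbf X}_J}(M)$ in $\dmkQF{{\mathbf X}_J}{F}$; invertibility of each $\pi_{{\mathbf X}_J}(M)$ forces each of these to be an isomorphism; and conservativity of $\times_J\pi_{{\mathbf X}_J}$ on $\dmkF{F}$ upgrades this to $\op{ev}_M$ being an isomorphism, hence $M$ invertible. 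Your caveat about $\pi_{{\mathbf X}_J}$ not preserving geometricity (since $\hii_{X_i}$ is not compact) and about reading invertibility relative to the unit $\hii_{{\mathbf X}_J}$ is the right point of care; it is implicitly handled in the paper and you are correct that it does not obstruct the argument.
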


Moreover, we have:

\begin{prop}
\label{inj-pic-XJ}
The functor $\times_{J\subset I}\pi_{{\mathbf X}_J}$ is injective on the $\op{Pic}$.
\end{prop}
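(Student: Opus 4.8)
The plan is to reduce the statement to the assertion that an invertible object all of whose projections are isomorphic to the corresponding $\otimes$-units is itself $\cong T$, and then to prove the latter by induction on $|I|$, peeling off one variety at a time. So let $A$ and $B$ be invertible objects of $\dmgmkF{F}$ with $\pi_{{\mathbf X}_J}(A)\cong\pi_{{\mathbf X}_J}(B)$ for every $J\subset I$. Each $\pi_{{\mathbf X}_J}$ is symmetric monoidal, hence carries $\otimes$-inverses to $\otimes$-inverses, so the invertible object $H:=\Homi(A,T)\otimes B\cong A^{-1}\otimes B$ satisfies $\pi_{{\mathbf X}_J}(H)\cong\pi_{{\mathbf X}_J}(A)^{-1}\otimes\pi_{{\mathbf X}_J}(B)\cong\hii_{{\mathbf X}_J}$, the $\otimes$-unit of $\dmkQF{{\mathbf X}_J}{F}$, for all $J$. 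It therefore suffices to prove: if $H$ is invertible and $\pi_{{\mathbf X}_J}(H)\cong\hii_{{\mathbf X}_J}$ for all $J\subset I$, then $H\cong T$.

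I would prove this by induction on $|I|$, the case $|I|=0$ being trivial since then $\times_{J}\pi_{{\mathbf X}_J}$ is the identity. For the step, fix $i_0\in I$ and put $I'=I\setminus\{i_0\}$. Tensoring with $\hii_{X_{i_0}}$ is a monoidal localization of $\dmkF{F}$ onto the subcategory $\dmkQF{X_{i_0}}{F}$, whose $\otimes$-unit is $\hii_{X_{i_0}}$; this subcategory carries the very same formalism for the collection ${\mathbf X}':=\{X_i\}_{i\in I'}$ (the complement of $\otimes\hii_{X_i}$ inside it being $\otimes(\whii_{X_i}\otimes\hii_{X_{i_0}})$), and the orthogonality relations (\ref{polu-ort})--(\ref{polu-poly-ort}) together with the facts about \v{C}ech motives recalled above restrict to it verbatim; likewise for $\dmkQF{\widetilde{X_{i_0}}}{F}$. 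Since $\otimes\hii_{X_{i_0}}$ is monoidal, $\pi_{X_{i_0}}(H)$ is invertible in $\dmkQF{X_{i_0}}{F}$, and for $J'\subset I'$ one has $\pi_{{\mathbf X}'_{J'}}(\pi_{X_{i_0}}(H))=H\otimes\hii_{{\mathbf X}_{J'\cup\{i_0\}}}=\pi_{{\mathbf X}_{J'\cup\{i_0\}}}(H)\cong\hii_{{\mathbf X}_{J'\cup\{i_0\}}}$, which is the $\otimes$-unit of the corresponding subcategory; the inductive hypothesis (applicable since $|I'|<|I|$) gives $\pi_{X_{i_0}}(H)\cong\hii_{X_{i_0}}$. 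Dually, $\widetilde{\pi}_{X_{i_0}}(H)\cong\whii_{X_{i_0}}$.

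Now use the functorial decomposition triangle of $H$ along $X_{i_0}$, which is $H\otimes\Delta_{X_{i_0}}$:
$$H\otimes\hii_{X_{i_0}}\lrow H\lrow H\otimes\whii_{X_{i_0}}\stackrel{\op{id}_H\otimes\,\delta}{\lrow}H\otimes\hii_{X_{i_0}}[1],$$
where $\delta\colon\whii_{X_{i_0}}\to\hii_{X_{i_0}}[1]$ is the connecting map of $\Delta_{X_{i_0}}$. If $\whii_{X_{i_0}}\cong 0$ this triangle already gives $H\cong\hii_{X_{i_0}}\cong T$. Otherwise $\hii_{X_{i_0}}\not\cong T$, and I claim $\Hom_{\dmkF{F}}(\whii_{X_{i_0}},\hii_{X_{i_0}}[1])$ is a free $F$-module of rank one generated by $\delta$: applying $\Hom(-,\hii_{X_{i_0}}[1])$ to $\Delta_{X_{i_0}}$ and using $\op{End}(\hii_{X_{i_0}})=F$ together with $\Hom(T,\hii_{X_{i_0}})=\Hom(T,\hii_{X_{i_0}}[1])=0$ (properties of the \v{C}ech motive of a variety without a zero-cycle of degree $1$, cf. \cite[Sect. 2.3]{IMQ} and \cite[Lem. 4.9]{Vo-BKMK}), the long exact sequence collapses to an isomorphism $\delta^{*}\colon F\to\Hom(\whii_{X_{i_0}},\hii_{X_{i_0}}[1])$ sending $\op{id}$ to $\delta$. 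Fix isomorphisms $\phi\colon H\otimes\hii_{X_{i_0}}\to\hii_{X_{i_0}}$ and $\psi\colon H\otimes\whii_{X_{i_0}}\to\whii_{X_{i_0}}$ from the previous step. Since $\otimes H$ is an autoequivalence and $\phi,\psi$ are isomorphisms, the element $w:=\phi[1]\circ(\op{id}_H\otimes\,\delta)\circ\psi^{-1}$ is the image of the generator $\delta$ under an $F$-linear automorphism of the rank-one module $\Hom(\whii_{X_{i_0}},\hii_{X_{i_0}}[1])$, hence $w=u\cdot\delta$ for some $u\in F^{\times}$. Replacing $\phi$ by $u^{-1}\phi$, the pair $(u^{-1}\phi,\psi)$ of isomorphisms on the outer vertices is compatible with the connecting maps of $H\otimes\Delta_{X_{i_0}}$ and of $\Delta_{X_{i_0}}$; by the axioms of triangulated categories it extends to a morphism of distinguished triangles whose middle component $f\colon H\to T$, being a morphism of triangles that is an isomorphism on the two outer vertices, is an isomorphism. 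Thus $H\cong T$, which completes the induction and the proof.

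The step I expect to require the most care is the rigidity input — the freeness of rank one of $\Hom(\whii_{X},\hii_{X}[1])$ on the connecting map of $\Delta_X$, equivalently the statement that the only distinguished triangles with vertices $\hii_X$, an invertible object, $\whii_X$ (in that cyclic order) are the evident ones — and the bookkeeping needed to check that all the properties of \v{C}ech motives used above descend to the localizing subcategories $\dmkQF{X_{i_0}}{F}$, so that the induction on $|I|$ is legitimate. Over $F=\zz/2$, the case actually used in the rest of the paper, the rank-one statement reduces merely to the one-dimensionality of $\Hom(\whii_X,\hii_X[1])$.
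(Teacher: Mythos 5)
Your route is genuinely different from the paper's. You induct upward on $|I|$ inside the localizations $\dmkQF{X_{i_0}}{F}$, and for the gluing step you use a rigidity statement: $\Hom(\whii_X,\hii_X[1])$ is free of rank one on the connecting morphism of $\Delta_X$, so any invertible object with the prescribed outer projections fits into a triangle isomorphic to $\Delta_X$. The paper's proof stays inside $\dmkF{F}$ throughout, inducting \emph{downward} on $|J|$ along the honest products ${\mathbf X}^J=\prod_{i\in J}X_i$ and disjoint unions of the $X_i$, propagating a coherent family of isomorphisms via the Mayer--Vietoris triangle of Lemma \ref{l2-inj}; for the gluing it proves the vanishing $\Hom(\whii_Q\otimes\hii_P,\hii_P[1])=0$ by comparing $\hm^{1,0}$ of the \v{C}ech simplicial schemes of $P$ and $P\times Q$ to $\op{H}^1_{et}(\op{Spec}(k),F)$ through Beilinson--Lichtenbaum in weight zero, and then splits an idempotent rather than matching connecting maps.

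As written, though, your argument has two real gaps. First, the input $\Hom(T,\hii_X)=0$ is false for general coefficients $F$: taking $F=\zz$ and $X$ an anisotropic conic, a closed point of degree $2$ gives by transfer a nonzero map $f\colon T\to M(X)\to\hii_X$ with $\op{pr}\circ f=2\in\op{End}(T)$, so $\Hom(T,\hii_X)$ surjects onto $2\zz\ne0$ and the kernel of $\delta^{*}\colon F\to\Hom(\whii_X,\hii_X[1])$ contains $2\zz$, destroying freeness of rank one. What your retraction argument actually shows is only that the image of $\Hom(T,\hii_X)\to\op{End}(T)=F$ is a \emph{proper} ideal (zero iff $F$ is a field), so your proof is confined to field coefficients, while the proposition — and the rest of Section \ref{alternative} — is stated for an arbitrary commutative unital $F$. (You also tacitly need $\Hom(T,\hii_X[1])=0$ for surjectivity of $\delta^{*}$; this is the fact invoked in Proposition \ref{z+z}, but it is worth saying so rather than folding it into \cite[Sect. 2.3]{IMQ}.) Second, the claim that $\dmkQF{X_{i_0}}{F}$ ``carries the very same formalism verbatim'' is exactly the nontrivial work your induction still owes: the Hom computation you need at the gluing step after localizing — the analogue, say, of $\Hom(\hii_{X_{i_0}},\hii_{X\times X_{i_0}}[1])=0$ and of the proper-ideal argument relative to $\op{End}(\hii_{X_{i_0}})$ — is not a statement recorded in \cite{IMQ}, and it is precisely to sidestep this that the paper introduces Lemma \ref{l2-inj}, which keeps every step phrased in terms of \v{C}ech motives of genuine smooth projective $k$-varieties (products and disjoint unions of the $X_i$). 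Closing both gaps would essentially reconstitute the paper's lemmas.
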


\begin{proof}

Denote as $T_P$, respectively $T_{P\times\wt{Q}}$, the unit objects of the category
$\dmkQF{P}{F}$, respectively, $\dmkQF{P\times\wt{Q}}{F}$.

\begin{lem}
\label{l1-inj}
Let
$P$ and $Q$ be smooth projective varieties, and $V\in\op{Pic}(\dmkQF{P}{F})$ be such an invertible object that
$V\otimes\hii_{Q}\cong T_{P\times Q}$ and $V\otimes\whii_{Q}\cong T_{P\times\widetilde{Q}}$.
Then $V\cong T_P$. Moreover, this identification $\otimes\hii_{Q}$ coincides with the original identification
of $V\otimes\hii_{Q}$ and $T_{P\times Q}$.
\end{lem}

\begin{proof}
Let $U\in\op{Pic}(\dmkQF{P}{F})$ be the inverse of $V$. Tensoring $V$ and $T_P$ with $\Delta_Q$ we get two exact triangles
in $\dmkQF{P}{F}$, where we can identify the $T_{P\times Q}$-terms:
\begin{equation}
\label{diag-TV}
\xymatrix{
T_P \ar[r] & T_{P\times\widetilde{Q}} \ar[r] & T_{P\times Q}[1] \ar[r] \ar@<1ex>[d]^{\ffi} & T_P[1] \\
V \ar[r] & T_{P\times\widetilde{Q}} \ar[r] & T_{P\times Q}[1] \ar[r] \ar@<1ex>[u]^{\psi} & V[1].
}
\end{equation}
Since $U=V^{-1}$ is invertible, we have:
$$
\Hom(T_{P\times\widetilde{Q}},V[1])=\Hom(U\otimes T_{P\times\widetilde{Q}},T_P[1])=\Hom(T_{P\times\widetilde{Q}},T_P[1]).
$$
By \cite[Thm 2.3.2]{IMQ} (or \cite[Lemma 4.9]{Vo-BKMK}), we can identify:
\begin{equation*}
\begin{split}
&\Hom(T_P,T_P[1])=\Hom(T_P,T[1])=\hm^{1,0}(\check{C}ech(P),F)=\op{H}^1_{Zar}(\check{C}ech(P),F),\\
&\Hom(T_{P\times Q},T_P[1])=\Hom(T_{P\times Q},T[1])=\hm^{1,0}(\check{C}ech(P\times Q),F)=\op{H}^1_{Zar}(\check{C}ech(P\times Q),F).
\end{split}
\end{equation*}
And since $\check{C}ech(P)$ and $\check{C}ech(P\times Q)$
in \'etale topology are both contractible to $\op{Spec}(k)$, by the Beilinson-Lichtenbaum Conjecture in weight zero (classical),
the groups $\hm^{1,0}(\check{C}ech(P),F)$ and $\hm^{1,0}(\check{C}ech(P\times Q),F)$ both embed into $\op{H}^{1}_{et}(\op{Spec}(k),F)$,
as the map from Zariski to \'etale cohomology is injective on the 1-st diagonal - \cite[Cor. 6.9]{VoMil}.
In particular, the natural map
$\Hom(T_P,T_P[1])\hookrightarrow\Hom(T_{P\times Q},T_P[1])$ is injective.
On the other hand, $\Hom(T_{P\times Q}[1],T_P[1])=\Hom(T_{P\times Q},T)=F$,
$\Hom(T_P[1],T_P[1])=\Hom(T_P,T)=F$ and the map $\Hom(T_P[1],T_P[1])\row\Hom(T_{P\times Q}[1],T_P[1])$ is an isomorphism.
Considering long exact sequences of $\Hom$'s from the triangle $T_{P\times Q}\row T_P\row T_{P\times\wt{Q}}\row T_{P\times Q}[1]$
to $T_P$ and $V$, we get that both groups: $\Hom(T_{P\times\widetilde{Q}},T_P[1])$ and $\Hom(T_{P\times\widetilde{Q}},V[1])$ are trivial.
Hence the above mutually inverse isomorphisms $\ffi,\psi$ can be extended to maps of exact triangles. In particular, we get maps
$\xymatrix{T_P \ar@<0.5ex>[r]^{f} & V \ar@<0.5ex>[l]^{g}}$. Moreover, since by (\ref{polu-ort}), there are no hom's from
$T_{P\times Q}[1]$ to $T_{P\times\widetilde{Q}}$, it follows that $(g\circ f)\otimes T_{P\times Q}=(\psi\circ\ffi)$.
Indeed, let $u:T_{P\times Q}[1]\row T_P[1]$, then $u\circ(\psi\circ\ffi)=(g\circ f)\circ u$. On the other hand,
$u\circ((g\circ f)\otimes T_{P\times Q})=(g\circ f)\circ u$, and so, the difference $(\psi\circ\ffi)-(g\circ f)\otimes T_{P\times Q}$
lifts to a map to $T_{P\times\widetilde{Q}}$ which must be zero.
Since
$\op{End}(T_{P})=F=\op{End}(T_{P\times Q})$ with the isomorphism (from left to right) given by $\otimes T_{P\times Q}$, and
$(\psi\circ\ffi)$ is invertible, we obtain that $(g\circ f)$ is invertible as well.
That means that $T_P$ is a direct summand in $V$. By \cite[Lem. 30]{Bac18}, this implies that $V\cong T_P$\footnote{I'm grateful to the
Referee for pointing out that connectivity of coefficients is not needed in this statement.}.
Alternatively, applying the same considerations to the bottom row of (\ref{diag-TV}) instead of the top one, one obtains that
$(f\circ g)\otimes T_{P\times Q}=(\ffi\circ\psi)$ and so, $(f\circ g)$ is invertible too, which gives the same isomorphism $V\cong T_P$.
\Qed
\end{proof}

\begin{lem}
\label{l2-inj}
If $P$ and $Q$ are smooth projective varieties, and $V\in\op{Pic}(\dmkF{F})$ be such that
$V\otimes\hii_{P}\cong T_P$, $V\otimes\hii_{Q}\cong T_Q$ and $V\otimes\hii_{P\times Q}\cong T_{P\times Q}$, and moreover,
these identifications commute with the maps $T_P\low T_{P\times Q}\row T_Q$. Then there is an identification
$V\otimes\hii_{P\sqcup Q}\cong T_{P\sqcup Q}$ and this commutes with the maps $T_P\row T_{P\sqcup Q}\low T_Q$.
\end{lem}

\begin{proof}
The natural projections provide a complex
$$
T_{P\times Q} \lrow T_P\oplus T_Q\lrow T_{P\sqcup Q}.
$$
I claim, that it extends to a distinguished triangle. Indeed, tenzoring it with $\hii_Q$ and $\whii_Q$, we get split
exact complexes $T_{P\times Q}\row T_{P\times Q}\oplus T_Q\row T_Q$ and $0\row T_{P\times\wt{Q}}\row T_{P\times\wt{Q}}$.
Since the unit $T$ is an extension of $\hii_Q$ and $\whii_Q$, we obtain the Mayer-Vietoris type distinguished triangle:
$$
T_{P\times Q} \lrow T_P\oplus T_Q\lrow T_{P\sqcup Q}\lrow T_{P\times Q}[1].
$$
Tensoring it with $V$ and identifying both in $T_{P\times Q}$ and $T_P\oplus T_Q$ terms
$$
\xymatrix{
T_{P\times Q} \ar[r] \ar[d]^{\cong}& T_P\oplus T_Q \ar[r] \ar[d]^{\cong} & T_{P\sqcup Q} \ar[r] & T_{P\times Q}[1] \ar[d]^{\cong}\\
T_{P\times Q} \ar[r] & T_P\oplus T_Q \ar[r] & V_{P\sqcup Q} \ar[r] & T_{P\times Q}[1]
}
$$
we extend it to an isomorphism of distinguished triangles. In particular, we get an identification
$T_{P\sqcup Q}\cong V_{P\sqcup Q}$ commuting with the needed maps.
\Qed
\end{proof}

Return to the proof of Proposition \ref{inj-pic-XJ}.
Let ${\mathbf X}^J=\prod_{i\in J}X_i$, and $T_{{\mathbf X}^J}=\hii_{{\mathbf X}^J}$.
Let us now prove by the decreasing induction on $|J|$ that $V\otimes T_{{\mathbf X}^J}\cong T_{{\mathbf X}^J}$ and these identifications
commute with the canonical maps $T_{{\mathbf X}^L}\row T_{{\mathbf X}^J}$, for $J\subset L$.

For $J=I$ we have $\hii_{{\mathbf X}^I}=\hii_{{\mathbf X}_I}$, and we get the identification from the conditions of
Proposition \ref{inj-pic-XJ}.

Suppose, we want to prove the inductive step for a given $J$. By the inductive assumption, we have (coherent) isomorphisms
for all $T_{{\mathbf X}^{J'}}$ with $J\subsetneqq J'$. It follows from the inductive application of Lemma \ref{l2-inj}
that we can add to this
coherent collection the isomorphism for $T_{{\mathbf X}^J\times(\sqcup_{i\not\in J}X_i)}$. Since
$T_{{\mathbf X}^J\times\widetilde{(\sqcup_{i\not\in J}X_i)}}=\hii_{{\mathbf X}_J}$, from the conditions of Proposition \ref{inj-pic-XJ}
and Lemma \ref{l1-inj}, we get that $T_{{\mathbf X}^{J}}$ can be added to our collection as well. Induction step is proven.
For $J=\emptyset$ we get an isomorphism $V\cong T$. Hence our map is injective on $\op{Pic}$.
\Qed
\end{proof}

To start with, let us reprove the result of T.Bachmann claiming that the reduced motives of affine quadrics are invertible.

\begin{prop} {\rm (Bachmann, \cite[Thm 33]{BQ})}
\label{eq-invert}
Let $A_q$ be an affine quadric $\{q=1\}$. Then the reduced motive $\widetilde{M}(A_q)$ is invertible in $\dmgmkD$.
\end{prop}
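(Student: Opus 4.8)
The plan is to deduce invertibility of $\widetilde{M}(A_q)$ from the general machinery built up in Section \ref{alternative}, rather than from Bachmann's functors. The strategy is: choose a good finite collection $\mathbf{X}$ of smooth projective quadrics so that each projector $\pi_{\mathbf{X}_J}$ sends $\widetilde{M}(A_q)$ (equivalently $e^q$, up to shift) to a single Tate-motive $T_{\mathbf{X}_J}(i)[j]$ in $\dmkQD{\mathbf{X}_J}$. Once every projection is invertible in its respective localizing subcategory, Proposition \ref{dist-pic-XJ} tells us that $\widetilde{M}(A_q)$ is invertible in $\dmgmkD$ (note $\widetilde{M}(A_q)$ is already geometric, as recalled in the introduction).

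First I would recall, from the Gysin triangle (\ref{AQ'Q}) and its reduced form, that $e^q$ sits in an exact triangle built from $M(Q')$ and $M(Q)(1)[2]\oplus T$, so it lies in $\DQMgm$. Then I would invoke the key input from \cite{IMQ}: for any object of $\DQMgm$ one can find a finite collection $\mathbf{X}$ of smooth projective quadrics over $k$ such that, for every $J\subset I$, the projection $\pi_{\mathbf{X}_J}$ applied to that object is an extension of Tate-motives $\hii_{\mathbf{X}_J}(i)[j]$ in $\dmkQD{\mathbf{X}_J}$; and when the object represents an element of $\picq$, each such projection is in fact a \emph{single} such Tate-motive. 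Concretely, $\mathbf{X}$ should include $Q$, $Q'$, and enough further quadrics (e.g. the quadrics governing the Witt-index jumps, i.e. the quadratic Grassmannians attached to the indecomposable summands of $M(Q)$ and $M(Q')$) so that over each "generic point of $\mathbf{X}_J$" the forms $q$ and $q'$ have constant Witt indices; then by Proposition \ref{phiE-on-ePQ}-type bookkeeping the projection of $e^q$ has no room for a non-split extension and collapses to one Tate-motive.

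The verification then proceeds projector by projector. For each $J$, over the relevant generic points the Gysin triangle (\ref{AQ'Q}) becomes a triangle of sums of Tate-motives; the projection $\pi_{\mathbf{X}_J}$ of $M(Q')$ and of $M(Q)(1)[2]\oplus T$ are sums of the units $\hii_{\mathbf{X}_J}(i)[j]$, and a dimension/degree count (exactly as in the proof of Proposition \ref{phiE-on-ePQ}, with "$T(i)[j]$" replaced by "$\hii_{\mathbf{X}_J}(i)[j]$") shows all but one Tate-summand cancels, leaving $\pi_{\mathbf{X}_J}(e^q)\cong\hii_{\mathbf{X}_J}(x)[y]$ for a single $(x)[y]$. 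Each such object is obviously invertible in $\dmkQD{\mathbf{X}_J}$ with inverse $\hii_{\mathbf{X}_J}(-x)[-y]$. Applying Proposition \ref{dist-pic-XJ} gives that $e^q$, hence $\widetilde{M}(A_q)=e^q[-1]$, is invertible in $\dmgmkD$.

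I expect the main obstacle to be the construction of the collection $\mathbf{X}$ and the proof that \emph{all} projections $\pi_{\mathbf{X}_J}(e^q)$ are single Tate-motives, not merely extensions of Tates: one must combine \cite[Thm 3.1]{IMQ} (which handles the general "extension of $\hii$'s" statement for motives of quadrics) with a careful analysis showing there are no non-split self-extensions in the $e^q$-case — this is where the specific geometry of $A_q=Q'\setminus Q$ and the relation $q'=\la 1\ra\perp-q$ must be used. The bookkeeping of which Tate-summand survives in each $\pi_{\mathbf{X}_J}$ is routine once the framework is in place, being a direct transcription of Proposition \ref{phiE-on-ePQ}; the conceptual work is in setting up $\mathbf{X}$ correctly and citing \cite{IMQ} appropriately.
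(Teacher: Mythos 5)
Your plan is the right one and is essentially the approach the paper takes: project via the Čech-simplicial-scheme idempotents $\pi_{\mathbf{X}_J}$ associated to a collection $\mathbf{X}$ of quadratic Grassmannians, show each projection of $e^q$ is a single Tate motive, and conclude invertibility from Proposition~\ref{dist-pic-XJ}. You also correctly identify the crux: it is not enough that each $\pi_{\mathbf{X}_J}(e^q)$ be an \emph{extension} of Tate motives; one must show it degenerates to a single $\hii_{\mathbf{X}_J}(x)[y]$. But having named this as ``the main obstacle,'' you do not resolve it — you defer to ``a careful analysis showing there are no non-split self-extensions'' and call the subsequent bookkeeping ``a direct transcription of Proposition~\ref{phiE-on-ePQ},'' which is circular, as \ref{phiE-on-ePQ} is proved with Bachmann's functors, precisely the thing you are trying to bypass.

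The way the paper closes this gap is concrete and worth noting. The collection is the interleaved chain of Grassmannians $Q'^0, Q^0, Q'^1, Q^1,\ldots$, whose Čech motives form a totally ordered chain $\hii_{X_1}\leftarrow\hii_{X_2}\leftarrow\cdots\leftarrow\hii_{X_n}$; because of this order one replaces the poly-binary decomposition over $2^I$ by a Postnikov tower whose graded pieces are only the consecutive windows $\hii_{X_{i/i+1}}=\hii_{X_i}\otimes\whii_{X_{i+1}}$. The decisive input is then \cite[Prop.~3.6]{IMQ} (not the more general \cite[Thm.~3.1]{IMQ} that you cite), which gives an \emph{explicit} distinguished triangle presenting $M(R)\otimes\hii_{R^{k/k+1}}$ as an extension of $\oplus_l\hii_{R^{k/k+1}}(l)[2l]$ and $\oplus_l\hii_{R^{k/k+1}}(M-l)[2M-2l]$ by specific plane-section cycles. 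Applying this to $R=Q'$ and $R=Q$ and feeding the result into the (projected) Gysin triangle for $A_q=Q'\setminus Q$, the Tate contributions cancel term by term and leave exactly one Tate-shifted $\hii_{X_{i/i+1}}$, with explicit degree depending on the parity of $i$. That cancellation is the content that your ``dimension/degree count'' would have to supply; without citing \cite[Prop.~3.6]{IMQ} and doing the comparison of the two projected triangles explicitly, the argument has a genuine hole at its center.
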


\begin{proof}
Let $\ddim(q)=n$ and $q'=\la 1\ra\perp -q$. Then $A_q=Q'\backslash Q$, and we have an exact triangle in $\dmkD$:
$$
M(Q')\row M(Q)(1)[2]\oplus T\row \widetilde{M}(A_q)[1]\row M(Q')[1].
$$
For a smooth projective quadric $R$ let us denote as $R^{i}$ the Grassmannian of the $i$-dimensional projective planes on $R$.
Since $Q$ is a co-dimension one subquadric of $Q'$, we have that, for any extension $E/k$, the following inequalities
on the Witt indices hold: $i_W(q|_E)\leq i_W(q'|_E)\leq i_W(q|_E)+1$. In particular, we have a chain of rational maps
between the respective Grassmannians:
\[\xymatrix@-0.1pc{Q'^{0} & Q^{0}\ar @{-->}[l] & Q'^{1}\ar @{-->}[l] & Q^{1}\ar @{-->}[l] & \ldots \ar @{-->}[l]   }\]
which induces
a chain of morphisms of Motives of their \v{C}ech simplicial schemes (cf. \cite[proof of Prop 2.3]{BV}):
$$
\hii_{Q'^{0}}\low\hii_{Q^{0}}\low\hii_{Q'^{1}}\low\hii_{Q^{1}}\low\hii_{Q'^{2}}\low\hii_{Q^{2}}\low\ldots.
$$
Let us rename it as:
$$
\hii_{X_1}\low\hii_{X_2}\low\hii_{X_3}\low\ldots\low\hii_{X_n}.
$$
Consider the collection ${\mathbf X}=\{X_i\}_{i=1}^n$. We can use the standard poly-binary approach as above, but since our
motives of \v{C}ech simplicial schemes are ordered, we can substitute it by an "ordered" version.
We have a Postnikov system in $\dmkD$:
$$
\xymatrix{
& \hii_{X_{0/1}} \ar[d]^{[1]} & \hii_{X_{1/2}} \ar[d]^{[1]} & & & \hii_{X_{n-1/n}} \ar[d]^{[1]} & \hii_{X_{n/n+1}} \ar[d]^{[1]}\\
T \ar[ru] & \hii_{X_1} \ar[l] \ar[ru] \ar@{}[lu]|-(0.2){\star}
& \hii_{X_2}  \ar[l]  \ar@{}[lu]|-(0.2){\star} & \ldots  & \hii_{X_{n-1}} \ar[ru] & \hii_{X_n} \ar[l] \ar[ru] \ar@{}[lu]|-(0.2){\star}
& 0 \ar[l]  \ar@{}[lu]|-(0.35){\star}
}
$$
where $\hii_{X_{i/i+1}}=\hii_{X_i}\otimes\whii_{X_{i+1}}$. Thus, the unit $T$ is an extension of idempotents $\hii_{X_{i/i+1}}$, $i=1,\ldots,n$. Note that $\hii_{X_i}\otimes\hii_{X_j}=\hii_{\op{max}(i,j)}$, while $\whii_{X_i}\otimes\whii_{X_j}=\whii_{\op{min}(i,j)}$,
and so $\hii_{X_{i/i+1}}\otimes\hii_{X_j}=\hii_{X_{i/i+1}}$, for $j\leq i$, and is zero otherwise.
Let $N'=n-1=\ddim(Q')$ and $N=n-2=\ddim(Q)$.
By \cite[Prop 3.6]{IMQ}, for a quadric $R$ of dimension $M$, we have an exact triangle in $\dmkD$:
\begin{equation*}
\oplus_{l=0}^{k}\hii_{R^{k/k+1}}(M-l)[2M-2l]\lrow M(R)\otimes\hii_{R^{k/k+1}}\lrow
\oplus_{l=0}^{k}\hii_{R^{k/k+1}}(l)[2l]\lrow\ldots,
\end{equation*}
where the maps are induced by the plane section cycles $T(M-l)[2M-2l]\row M(R)$ and the dual ones
$M(R)\row T(l)[2l]$.
(in loc. cit. it is stated for odd-dimensional quadrics only as motives were integral there, but the same arguments work for all
quadrics for motives with $\zz/2$-coefficients).
Considering $R=Q'$ and $R=Q$, and tensoring the above triangle with the appropriate $\hii_{X_{i/i+1}}$
(with $k=[i-1/2]$, resp. $k=[i-2/2]$),
we obtain exact triangles in $\dmkD$:
\begin{equation*}
\begin{split}
\oplus_{l=0}^{[i-1/2]}\hii_{X_{i/i+1}}(N'-l)[2N'-2l]\lrow M(Q')\otimes\hii_{X_{i/i+1}}\lrow
\oplus_{l=0}^{[i-1/2]}\hii_{X_{i/i+1}}(l)[2l]\lrow\ldots;\\
\oplus_{l=0}^{[i-2/2]}\hii_{X_{i/i+1}}(N-l)[2N-2l]\lrow M(Q)\otimes\hii_{X_{i/i+1}}\lrow
\oplus_{l=0}^{[i-2/2]}\hii_{X_{i/i+1}}(l)[2l]\lrow\ldots,
\end{split}
\end{equation*}
Thus $\widetilde{M}(A_q)\otimes\hii_{X_{i/i+1}}\cong\hii_{X_{i/i+1}}(i/2)[i-1]$, for $i$-even,
and $\cong\hii_{X_{i/i+1}}(N'-(i-1/2))[2N'-i+1]$, for $i$-odd.
In any case, $\widetilde{M}(A_q)\otimes\hii_{X_{i/i+1}}$ is invertible in $\dmkQD{X_{i/i+1}}$, for every $i$.
By the "ordered" version of Proposition \ref{dist-pic-XJ}, the functor
$$
\times(\otimes\hii_{X_{i/i+1}}):\dmgmkD\row\times_i\dmkQD{X_{i/i+1}}
$$
is conservative and detects invertible objects. Hence,
$\widetilde{M}(A_q)$ is invertible in $\dmkD$.
\Qed
\end{proof}

Denote as $Tate_{{\mathbf X}_{J}}=Tate(\dmkQF{{\mathbf X}_{J}}{F})$ the thick tensor triangulated subcategory of
$\dmkQF{{\mathbf X}_{J}}{F}$ generated by the Tate motives $T_{{\mathbf X}_{J}}(a)$.
Then $\op{Pic}(Tate_{{\mathbf X}_{J}})$ contains the subgroup $\ttt$ consisting of Tate-motives $T_{{\mathbf X}_{J}}(a)[b],\,a,b\in\zz$.

\begin{prop}
\label{z+z}
Suppose the category $\dmkQF{{\mathbf X}_{J}}{F}$ is non-zero. Then $\ttt\cong\zz\oplus\zz$.
\end{prop}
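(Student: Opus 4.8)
The plan is to show that the natural homomorphism $\zz\oplus\zz\lrow\op{Pic}(Tate_{{\mathbf X}_{J}})$, $(a,b)\mapsto T_{{\mathbf X}_{J}}(a)[b]$, is injective; equivalently, that an isomorphism $T_{{\mathbf X}_{J}}(a)[b]\cong T_{{\mathbf X}_{J}}$ in $\dmkQF{{\mathbf X}_{J}}{F}$ forces $a=b=0$. The engine is a short $\Hom$-computation together with the hypothesis. First, since $\dmkQF{{\mathbf X}_{J}}{F}\neq 0$, its tensor unit $T_{{\mathbf X}_{J}}=\hii_{{\mathbf X}_{J}}$ is non-zero, so $\op{End}(T_{{\mathbf X}_{J}})\neq 0$. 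The two facts I need in addition are the vanishings
$$
\Hom_{\dmkQF{{\mathbf X}_{J}}{F}}(T_{{\mathbf X}_{J}},T_{{\mathbf X}_{J}}(q)[p])=0\quad\text{for }q<0\ (\text{all }p),\qquad\text{and for }q=0,\ p<0 .
$$
Granting these, any isomorphism $\varphi\colon T_{{\mathbf X}_{J}}(a)[b]\xrightarrow{\ \cong\ }T_{{\mathbf X}_{J}}$ produces isomorphisms $T_{{\mathbf X}_{J}}(na)[nb]\cong T_{{\mathbf X}_{J}}$ for every $n\in\zz$ (composing $\varphi$, resp. $\varphi^{-1}$), hence $\op{End}(T_{{\mathbf X}_{J}})\cong\Hom(T_{{\mathbf X}_{J}},T_{{\mathbf X}_{J}}(na)[nb])$, so these groups are all non-zero; picking $n$ with $na<0$ contradicts the first vanishing and forces $a=0$, after which picking $n$ with $nb<0$ contradicts the second and forces $b=0$.

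To prove the vanishing I would reduce to the case of a single $\whii$. Write $A=\prod_{i\in J}X_{i}$ and $B=\coprod_{i\notin J}X_{i}$, so $\hii_{{\mathbf X}_{J}}=\hii_{A}\otimes\whii_{B}$. Since $\pi_{A}(\whii_{B})=\hii_{{\mathbf X}_{J}}\neq 0$ and the kernel of $\pi_{A}$ consists of the motives vanishing over every generic point of $A$, there is a component of $A$ whose function field $E=k(A_{\alpha})$ satisfies $\whii_{B,E}\neq 0$; over such $E$ every $X_{i}$ with $i\in J$ acquires a rational point, so $\hii_{X_{i},E}\cong T$, and restriction carries $\dmkQF{{\mathbf X}_{J}}{F}$ into $\op{DM}_{\widetilde{B}}(E;F)$ sending $T_{{\mathbf X}_{J}}$ to the unit $\whii_{B,E}$. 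Thus it suffices to treat a single smooth projective $Q$ over a field $E$ with $\whii_{Q}\neq 0$ and show $\whii_{Q}(a)[b]\cong\whii_{Q}\Rightarrow a=b=0$. Here $\otimes\whii_{Q}$ is a Bousfield localization, so $\Hom_{\op{DM}_{\widetilde{Q}}(E;F)}(\whii_{Q},\whii_{Q}(q)[p])=\Hom_{\op{DM}(E;F)}(T,\whii_{Q}(q)[p])$, and the Gysin-type triangle $\hii_{Q}\row T\row\whii_{Q}\row\hii_{Q}[1]$ gives, for $q<0$, $\Hom(T,\whii_{Q}(q)[p])\cong\Hom(T,\hii_{Q}(q)[p+1])$ (because $\hm^{*,q}(\op{Spec}E,F)=0$), and for $q=0$, $p\ne 0$, squeezes $\Hom(T,\whii_{Q}[p])$ between $\hm^{*,0}(\op{Spec}E,F)$ (which lives only in degree $0$) and $\Hom(T,\hii_{Q}[\ast])$. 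The remaining input is the vanishing of the relevant pieces of $\Hom(T,\hii_{Q}(q)[\ast])=\Hom(T,M(\check{C}ech(Q))(q)[\ast])$, which I would extract from the analysis of \v{C}ech simplicial schemes in \cite[\S2.3]{IMQ}.

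The main obstacle is precisely this last point. For the \emph{colocal} idempotent $\hii_{Q}$ one has the clean identity $\Hom_{\dmkQF{Q}{F}}(T_{Q},T_{Q}(q)[p])=\hm^{p,q}(\check{C}ech(Q),F)$: a coreflection converts self-maps of $\hii_{Q}$ into honest motivic \emph{cohomology}, which vanishes in negative weight and, in weight $0$, in nonzero degree. But $\hii_{{\mathbf X}_{J}}=\hii_{A}\otimes\whii_{B}$ is neither colocal nor local, and after the reduction above the self-maps of $\whii_{Q}$ are governed, via the triangle, by $\Hom_{\op{DM}(E;F)}(T,M(\check{C}ech(Q))(q)[\ast])$ — a \emph{homology}-type group for which negative-weight vanishing is not formal. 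Establishing it, i.e. that $\whii_{Q}$ is not isomorphic to any non-trivial Tate twist or shift of itself, using the structural results of \cite{IMQ}, is where the actual work lies; once it is available, the iteration argument of the first paragraph closes the proof and yields $\ttt\cong\zz\oplus\zz$.
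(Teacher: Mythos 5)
Your two-step reduction --- first to a single $\whii_Q$ over an extension $E$, then iterating via invertibility --- is exactly the paper's strategy, and you correctly locate where the residual difficulty sits. But the proposal has a genuine gap, which you yourself flag: you have not established, and your argument requires, the vanishing $\Hom(T_{{\mathbf X}_J},T_{{\mathbf X}_J}(q)[p])=0$ for all $q<0$. After the reduction this asks, in particular, for vanishing of the homology-type groups $\Hom(T,\hii_Q(q)[p+1])$ in the region $q<0$, $p+1\leq q$, i.e.\ below the weight diagonal in negative weight, and there is indeed no formal statement that delivers this.

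The paper avoids this entirely by aiming at the other side of the diagonal and using the reflection $(a)[b]\leftrightarrow(-a)[-b]$ more sharply. Since $\Hom(T_Q,T_{\widetilde{Q}}(\ast)[\ast'])=0$, one has $\Hom(T_{\widetilde{Q}},T_{\widetilde{Q}}(c)[d])=\Hom(T,T_{\widetilde{Q}}(c)[d])$, which sits in an exact sequence between $\Hom(T,T(c)[d])$ and $\Hom(T,T_Q(c)[d+1])$. The first vanishes for $d>c$ or $c<0$ (motivic cohomology of a point), the second for $d\geq c$, i.e.\ for degree $>$ weight on $\hii_Q$. Combined, $\Hom(T_{\widetilde{Q}},T_{\widetilde{Q}}(c)[d])=0$ whenever $d>c$, or $d=c$ with $c<0$. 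Invertibility makes both $\Hom(T_{\widetilde{Q}},T_{\widetilde{Q}}(a)[b])$ and $\Hom(T_{\widetilde{Q}},T_{\widetilde{Q}}(-a)[-b])$ nonzero, and for any $(a)[b]\neq(0)[0]$ at least one of these two bidegrees lies in that region: when $b\neq a$, exactly one of them has $d>c$; when $b=a\neq 0$, the one with $a<0$ has $d=c<0$. That closes the argument without any negative-weight input beyond $\hm^{d,c}(\op{Spec}(k),F)=0$ for $c<0$. So the fix to your proof is simply to choose $n$ in the iteration so that $n(b-a)>0$ when $b\neq a$ (and handle $b=a$ via the sign of $a$), staying above the weight line rather than descending into negative weight where you got stuck.
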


\begin{proof}
Suppose that $T_{{\mathbf X}_{J}}\cong T_{{\mathbf X}_{J}}(a)[b]$, for some $(a)[b]\neq (0)[0]$.

Our projector $\hii_{{\mathbf X}_{J}}$ has the form $\hii_P\otimes\whii_Q$ for some smooth projective $P$ and $Q$.
This projector will be non-zero if and only if $Q$ has no zero-cycle of degree 1 over the function field $E$ of
some connected component of $P$. Indeed, it follows from \cite[Thm 2.3.4, Thm 2.3.5]{IMQ} that if $Q$ has a zero-cycle of degree $1$
over each such function field, then $M(P)\otimes\whii_Q=0$. Since $\hii_P$ belongs to the localizing subcategory generated
by $M(P)$, $\hii_P\otimes\whii_Q$ is zero as well. For the converse, it is sufficient to restrict to $E$ and observe that
$\whii_Q|_E$ is non-zero by \cite[Thm 2.3.3]{IMQ}.
Hence, in our situation, $\hii_P\otimes\whii_Q|_E=\whii_{Q_E}$ is still non-zero.
Thus, by passing to $E$, we can assume that our projector is $\whii_Q$. As we have $(a)[b]$-periodicity, we obtain:
$$
0\neq \Hom(T_{\widetilde{Q}},T_{\widetilde{Q}})=\Hom(T_{\widetilde{Q}},T_{\widetilde{Q}}(a)[b])=
\Hom(T_{\widetilde{Q}},T_{\widetilde{Q}}(-a)[-b]).
$$
From the fact that $\Hom(T_Q,T_{\widetilde{Q}}(*)[*'])=0$, we have:
$\Hom(T_{\widetilde{Q}},T_{\widetilde{Q}}(c)[d])=\Hom(T,T_{\widetilde{Q}}(c)[d])$,
and we have an exact sequence
$\Hom(T,T_Q(c)[d+1])\low\Hom(T,T_{\widetilde{Q}}(c)[d])\low\Hom(T,T(c)[d])$.
The group $\Hom(T,T(c)[d])$ is zero, for $d>c$ and for $c<0$, while the group $\Hom(T,T_Q(c)[d+1])$ is zero, for $d\geq c$.
Thus, one of the groups: $\Hom(T_{\widetilde{Q}},T_{\widetilde{Q}}(a)[b])$, or $\Hom(T_{\widetilde{Q}},T_{\widetilde{Q}}(-a)[-b])$
will be zero - a contradiction. Hence, all the Tate-motives $T_{{\mathbf X}_{J}}(a)[b]$ are different.
\Qed
\end{proof}

For ${\mathbf X}=\{X_i\}_{i\in I}$, denote as $\un{2^I}$ the set of those $J\subset I$, for which the projector
$\hii_{{\mathbf X}_J}$ is non-zero.

\begin{cor}
\label{dist-TJ}
Let $A$ be an object of $\dmgmkF{F}$ and ${\mathbf X}=\{X_i\}_{i\in I}$ be some collection such that
$\pi_{{\mathbf X}_J}(A)\cong T_{{\mathbf X}_{J}}(a_J)[b_J]$, for every $J\subset I$. Then $A$ is invertible.
Moreover, two such objects are isomorphic if and only if the respective functions $(a_J)[b_J]:\un{2^I}\row(\zz)[\zz]$ are the same.
\end{cor}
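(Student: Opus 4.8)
The plan is to obtain the statement as a formal consequence of the three structural results just established: Proposition~\ref{dist-pic-XJ} (the functor $\times_{J\subset I}\pi_{{\mathbf X}_J}$ detects invertible objects), Proposition~\ref{inj-pic-XJ} (it is injective on $\op{Pic}$), and Proposition~\ref{z+z} (inside each non-zero component $\dmkQF{{\mathbf X}_J}{F}$ the Tate-motives $T_{{\mathbf X}_J}(a)[b]$ are pairwise non-isomorphic). First I would check invertibility: for each $J\subset I$ the object $\pi_{{\mathbf X}_J}(A)\cong T_{{\mathbf X}_J}(a_J)[b_J]$ is a twist-and-shift of the unit $T_{{\mathbf X}_J}$ of $\dmkQF{{\mathbf X}_J}{F}$, hence invertible there with inverse $T_{{\mathbf X}_J}(-a_J)[-b_J]$; therefore $\times_{J}\pi_{{\mathbf X}_J}(A)$ is invertible in $\times_{J}\dmkQF{{\mathbf X}_J}{F}$, and since $A$ lies in $\dmgmkF{F}$, Proposition~\ref{dist-pic-XJ} forces $A$ to be invertible.

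For the classification, I would first note that the datum $(a_J)[b_J]$ is a genuine invariant of the isomorphism class of $A$ on the subset $\un{2^I}$: when $J\in\un{2^I}$ the category $\dmkQF{{\mathbf X}_J}{F}$ is non-zero, so by Proposition~\ref{z+z} the pair $(a_J)[b_J]$ is uniquely recovered from $\pi_{{\mathbf X}_J}(A)$ up to isomorphism, whereas for $J\notin\un{2^I}$ the component is the zero category and records nothing --- which is precisely why the function is only defined on $\un{2^I}$. Consequently isomorphic $A$ and $A'$ yield the same function $\un{2^I}\row(\zz)[\zz]$.

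Conversely, suppose $A$ and $A'$ both satisfy the hypothesis and determine the same function on $\un{2^I}$. For $J\in\un{2^I}$ we get $\pi_{{\mathbf X}_J}(A)\cong T_{{\mathbf X}_J}(a_J)[b_J]\cong\pi_{{\mathbf X}_J}(A')$, while for $J\notin\un{2^I}$ both projections vanish; hence $\times_{J}\pi_{{\mathbf X}_J}(A)\cong\times_{J}\pi_{{\mathbf X}_J}(A')$ in the product category. Since the first step shows $A$ and $A'$ are both invertible, Proposition~\ref{inj-pic-XJ} (injectivity on $\op{Pic}$) gives $A\cong A'$. The only point requiring care is the bookkeeping around the zero components: one must invoke Proposition~\ref{z+z} to see that on non-zero components distinct $(a)[b]$ really are non-isomorphic (so the invariant is well defined and its natural domain is $\un{2^I}$), and one must be sure the zero components can be harmlessly discarded both when defining the invariant and when comparing the images $\times_{J}\pi_{{\mathbf X}_J}(A)$ and $\times_{J}\pi_{{\mathbf X}_J}(A')$; beyond that the corollary follows directly from Propositions~\ref{dist-pic-XJ} and~\ref{inj-pic-XJ}.
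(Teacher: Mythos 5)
Your argument is correct and is exactly the route the paper takes: the paper's proof is the one-line remark that the corollary "follows directly from Propositions~\ref{dist-pic-XJ}, \ref{inj-pic-XJ} and \ref{z+z}," and what you have written is precisely the unpacking of that reference --- invertibility from Proposition~\ref{dist-pic-XJ}, well-definedness and non-degeneracy of the invariant from Proposition~\ref{z+z}, and the classification from Proposition~\ref{inj-pic-XJ}. No discrepancy.
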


\begin{proof}
It follows directly from Propositions \ref{dist-pic-XJ}, \ref{inj-pic-XJ} and \ref{z+z}.
\Qed
\end{proof}

Let now $A$ be some object of $\DQMgm$ (the category considered by T.Bachmann). Then $A$ is obtained from motives of finitely many
smooth projective quadrics $\{Q_l\}_{l\in L}$ using cone operations and tensor products (as well as direct summands and Tate-shifts).
Let $Q^r_l,\,r=0,\ldots,[\ddim(Q_l)/2]$ be the quadratic Grassmannians of $Q_l$. Consider the collection
${\mathbf X}(A)=\{X_i\}_{i\in I}$ of all these Grassmannians.

\begin{prop}
\label{decomp-tates}
Let $F=\zz/2$, and $A$ be an object of $\DQMgm$. Let ${\mathbf X}={\mathbf X}(A)$. Then
$\pi_{{\mathbf X}_J}(A)$ belongs to $Tate_{{\mathbf X}_{J}}$, for every $J\subset I$.
\end{prop}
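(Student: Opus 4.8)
The plan is to reduce the statement, by a dévissage on the way $A$ is assembled, to the case of a single generating quadric $R=Q_l$, and there to read it off from the computation of $M(R)\otimes\hii_{R^{k/k+1}}$ in \cite[Prop 3.6]{IMQ} together with the bookkeeping of how the projector $\hii_{{\mathbf X}_J}$ sees the chain of Grassmannians of $R$. Fix ${\mathbf X}={\mathbf X}(A)=\{X_i\}_{i\in I}$ and $J\subset I$. The functor $\pi_{{\mathbf X}_J}=\otimes\,\hii_{{\mathbf X}_J}:\DQMgm\row\dmkQD{{\mathbf X}_J}$ is triangulated, commutes with Tate twists, sends the unit to $T_{{\mathbf X}_J}$, and — since $\hii_{{\mathbf X}_J}$ is idempotent — satisfies $\pi_{{\mathbf X}_J}(U\otimes V)\cong\pi_{{\mathbf X}_J}(U)\otimes\pi_{{\mathbf X}_J}(V)$. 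As $Tate_{{\mathbf X}_J}$ is a thick tensor triangulated subcategory containing its own unit $T_{{\mathbf X}_J}$, the full subcategory of those $U\in\DQMgm$ with $\pi_{{\mathbf X}_J}(U)\in Tate_{{\mathbf X}_J}$ is thick, triangulated, and closed under tensor products and under Tate twists. Since $A$ lies in the thick tensor triangulated subcategory generated by the finitely many motives $M(Q_l)$, $l\in L$, it is therefore enough to prove that $M(Q_l)\otimes\hii_{{\mathbf X}_J}\in Tate_{{\mathbf X}_J}$ for every $l$.

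Fix $R=Q_l$ with $\ddim R=m$. By construction ${\mathbf X}$ contains all the quadratic Grassmannians $R^0=R,R^1,\ldots,R^{[m/2]}$ of $R$, and the obvious rational maps $R^{r+1}\dashrightarrow R^r$ produce a chain $\hii_{R^0}\low\hii_{R^1}\low\cdots\low\hii_{R^{[m/2]}}$, exactly as in the proof of Proposition \ref{eq-invert}; in particular $\hii_{R^a}\otimes\hii_{R^b}\cong\hii_{R^{\max(a,b)}}$, $\whii_{R^a}\otimes\whii_{R^b}\cong\whii_{R^{\min(a,b)}}$ and $\hii_{R^a}\otimes\whii_{R^b}\cong 0$ for $a\ge b$. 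Grouping in $\hii_{{\mathbf X}_J}=(\otimes_{i\in J}\hii_{X_i})\otimes(\otimes_{i\notin J}\whii_{X_i})$ the factors whose $X_i$ is (isomorphic to) a Grassmannian of $R$, and putting the rest into $P''$ (repeating an idempotent factor does not change the product), we may write $\hii_{{\mathbf X}_J}\cong P_R\otimes P''$, where $P_R$ is a tensor product of $\hii_{R^r}$'s and $\whii_{R^r}$'s in which every $r\in\{0,\ldots,[m/2]\}$ occurs at least once. Using the three relations above, the $\hii$-factors of $P_R$ combine to $\hii_{R^c}$ ($c$ their maximal index) and the $\whii$-factors to $\whii_{R^d}$ ($d$ their minimal index), so $P_R\cong\hii_{R^c}\otimes\whii_{R^d}$; and since every index occurs, necessarily $d\le c+1$. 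Hence either $d\le c$ and $P_R\cong 0$, or $d=c+1$ and $P_R\cong\hii_{R^{c/c+1}}:=\hii_{R^c}\otimes\whii_{R^{c+1}}$ for some $c\in\{-1,0,\ldots,[m/2]\}$, with the conventions $\hii_{R^{-1}}:=T$ (so $c=-1$ gives $P_R\cong\whii_R$) and $\whii_{R^{[m/2]+1}}:=T$ (so $c=[m/2]$ gives $P_R\cong\hii_{R^{[m/2]}}$).

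It remains to treat these cases. If $P_R\cong 0$ then $\hii_{{\mathbf X}_J}\cong 0$ and there is nothing to prove. If $c=-1$, then $M(R)\otimes\whii_R\cong 0$: indeed $\check{C}ech(R)\times R$ is the \v{C}ech simplicial scheme of the split epimorphism $R\times R\row R$ (second projection, with section the diagonal), hence is simplicially contractible over $R$, so $M(R)\otimes\hii_R\cong M(R)$ and $M(R)\otimes\whii_R\cong 0$; thus $M(R)\otimes\hii_{{\mathbf X}_J}\cong 0\in Tate_{{\mathbf X}_J}$. Finally, if $0\le c\le[m/2]$, then by \cite[Prop 3.6]{IMQ} — valid for all quadrics with $\zz/2$-coefficients, as recalled in the proof of Proposition \ref{eq-invert}, applied with $k=c$ — the object $M(R)\otimes\hii_{R^{c/c+1}}$ fits into a distinguished triangle whose two outer terms are finite direct sums of Tate twists $\hii_{R^{c/c+1}}(\ast)[\ast]$. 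Tensoring this triangle with $P''$ and using $\hii_{R^{c/c+1}}\otimes P''\cong\hii_{{\mathbf X}_J}=T_{{\mathbf X}_J}$ puts $M(R)\otimes\hii_{{\mathbf X}_J}$ inside a triangle with outer terms finite direct sums of $T_{{\mathbf X}_J}(\ast)[\ast]$, so $M(R)\otimes\hii_{{\mathbf X}_J}\in Tate_{{\mathbf X}_J}$. This proves the claim for $M(Q_l)$, hence, by the reduction above, for $A$.

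The only genuine work is the bookkeeping in the second paragraph — that the tensor product of the $\hii$'s and $\whii$'s attached to the Grassmannians of a fixed quadric always collapses to $0$, to $\whii_R$, or to some $\hii_{R^{c/c+1}}$ — but this is immediate from the chain structure already exploited in Proposition \ref{eq-invert}; once that is in place, the argument is essentially formal, the substantive input being \cite[Prop 3.6]{IMQ}.
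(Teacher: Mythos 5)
Your proof is correct and follows essentially the same route as the paper: reduce by dévissage to the generating quadrics $M(Q_l)$, observe that the chain $\hii_{R^0}\geq\hii_{R^1}\geq\cdots$ forces $\hii_{{\mathbf X}_J}$ to vanish or to factor through some $\hii_{R^{c/c+1}}$, and then invoke \cite[Prop 3.6]{IMQ}. The only difference is expository — you spell out the boundary cases $c=-1$ (giving $\whii_R$, hence $M(R)\otimes\whii_R\cong 0$) and $c=[m/2]$ with explicit conventions, which the paper's phrasing ($r=0,\ldots,[\ddim Q_l/2]$) leaves implicit.
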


\begin{proof}
From \cite[Prop 3.6]{IMQ} we have exact triangles in $\dmkD$:
\begin{equation*}
\oplus_{j=0}^{r}\hii_{Q_l^{r/r+1}}(N-j)[2N-2j]\lrow M(Q_l)\otimes\hii_{Q_l^{r/r+1}}\lrow
\oplus_{j=0}^{r}\hii_{Q_l^{r/r+1}}(j)[2j]\lrow\ldots,
\end{equation*}
where $\hii_{Q_l^{r/r+1}}=\hii_{Q_l^r}\otimes\whii_{Q_l^{r+1}}$ and $N=\ddim(Q_l)$.
Hence, $\pi_{Q_l^{r/r+1}}(M(Q_l))$ belongs to the Tate-motivic category.
And every projector $\pi_{{\mathbf X}_J}$ factors through some $\pi_{Q_l^{r/r+1}},\,r=0,\ldots,[\ddim(Q_l)/2]$ (for a fixed $l$).
Indeed, if $Q_l^t\in J$ and $Q_l^s\not\in J$, for some $t>s$, then the projector $\pi_{{\mathbf X}_J}$ contains the factor
$\hii_{Q_l^t}\otimes\whii_{Q_l^s}$, and so, is zero (as $\hii_{Q_l^t}\geq\hii_{Q_l^s}$ - see below).
Hence, for a non-zero projector (and fixed $l$), there exists $r$ such that
$Q_l^t\in J$, for $t\leq r$ and $Q_l^s\not\in J$, for $s>r$. Then $\pi_{{\mathbf X}_J}$ has the factor $\pi_{Q_l^{r/r+1}}$.
Thus, $\pi_{{\mathbf X}_J}(M(Q_l))$ belongs to $Tate_{{\mathbf X}_{J}}$, and so does $\pi_{{\mathbf X}_J}(A)$, as
$Tate_{{\mathbf X}_{J}}$ is closed under mentioned operations.
\Qed
\end{proof}

\begin{rem}
\label{z/2-coeff-expl}
The restriction on coefficients is caused by even-dimensional quadrics. For integral coefficients one has to use, in addition, Artin motives
corresponding to quadratic extensions. But if we restrict ourselves to odd-dimensional quadrics only, then the statement is true
with integral (and so, any) coefficients.
\end{rem}

For a collection ${\mathbf X}$, let the {\it reduced} collection $\un{{\mathbf X}}$ be the subset of those $X_i's$,
for which there are no zero-cycles of degree
$1$ over $k$. For a coefficient ring $F$ of prime characteristic, the collection is reduced exactly when $\emptyset\in\un{2^I}$.
If the collection ${\mathbf Y}$ contains ${\mathbf X}$, we call it a {\it refinement} of ${\mathbf X}$.

The following result shows that, at least, in working with $\picq$, both approaches are equivalent.
For $A\in\picq$, let ${\mathbf X}(A)={\mathbf X}$ be the collection, s.t. $\pi_{{\mathbf X}_J}(A)\in\ttt$, for all $J$.
We know from the proof of Proposition \ref{eq-invert} that such a collection exists.

\begin{prop}
\label{two-appr}
Let $A\in\picq$, and $\un{{\mathbf Y}}$ be any refinement of ${\mathbf X}(A)$ reduced. Then
$\Phi^k(A)=T(a)[b]\in\Kbt$ and $\pi_{\un{{\mathbf Y}}_{\emptyset}}(A)=T(a)[b]\in\dmkQF{\un{{\mathbf Y}}_{\emptyset}}{\zz/2}$,
for the same $(a)[b]$.
\end{prop}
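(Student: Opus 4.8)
The plan is to use the multiplicativity of both $\Phi^k$ and of the projectors $\pi_{\un{{\mathbf Y}}_{\emptyset}}$ to reduce to the case $A=e^q$, and then to compare two explicit formulas already in the text: the value of $\Phi^k(e^{Q'\backslash Q})$ furnished by Proposition \ref{phiE-on-ePQ}, and the value of $e^q$ tensored with the graded pieces of the ordered Postnikov system used in the proof of Proposition \ref{eq-invert}. Write $A\cong\bigotimes_i(e^{q_i})^{\otimes m_i}$ with $m_i\in\zz$ (the pure Tate-motives being among the $e^q$ with $q$ split). Since $A$ is invertible (Proposition \ref{eq-invert}) and $\Phi^k$, $\pi_{\un{{\mathbf Y}}_{\emptyset}}$ are monoidal, both $\Phi^k(A)$ and $\pi_{\un{{\mathbf Y}}_{\emptyset}}(A)$ are invertible, hence single Tate-motives --- in $\Kbt$ trivially, and in $\dmkQF{\un{{\mathbf Y}}_{\emptyset}}{\zz/2}$ by Proposition \ref{z+z}, which applies because $\un{{\mathbf Y}}$ is reduced (so $\emptyset\in\un{2^I}$). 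Thus the statement is the equality of two bidegrees.

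First I would check that the bidegree of $\pi_{\un{{\mathbf Y}}_{\emptyset}}(A)$ depends neither on the refinement ${\mathbf Y}$ nor on the choice of the collection ${\mathbf X}(A)$: with $I=I({\mathbf X}(A))$, the idempotent $\hii_{{\mathbf Y}_J}$ splits as $\hii_{{\mathbf X}(A)_{J\cap I}}$ tensored with the analogous idempotent built from the members of ${\mathbf Y}$ not in ${\mathbf X}(A)$, so, using that $\pi_{{\mathbf X}(A)_{J'}}(A)$ is a Tate-motive for all $J'$, tensoring further with idempotents gives $\pi_{{\mathbf Y}_J}(A)\cong T_{{\mathbf Y}_J}(a_{J'})[b_{J'}]$ with $J'=J\cap I$ and the same bidegree as $\pi_{{\mathbf X}(A)_{J'}}(A)$. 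Taking for $J$ the set of non-reduced members of ${\mathbf Y}$ (so $\pi_{{\mathbf Y}_J}=\pi_{\un{{\mathbf Y}}_{\emptyset}}$ and $J\cap I$ is the non-reduced part of ${\mathbf X}(A)$) shows the bidegree equals that of $\pi_{\un{{\mathbf X}(A)}_{\emptyset}}(A)$; and two valid collections ${\mathbf X}(A)$ admit a common refinement which is again valid, so any valid choice gives the same answer. Together with multiplicativity of both functors this reduces the Proposition to $A=e^q$, with ${\mathbf X}(e^q)$ the ordered chain from the proof of Proposition \ref{eq-invert}.

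Now let ${\mathbf X}(e^q)=\{X_1,\dots,X_n\}$ be that chain $Q'^0,Q^0,Q'^1,Q^1,\dots$ of quadratic Grassmannians of $Q$ and $Q'$, so $\hii_{X_i}\otimes\hii_{X_j}=\hii_{X_{\op{max}(i,j)}}$, $\whii_{X_i}\otimes\whii_{X_j}=\whii_{X_{\op{min}(i,j)}}$, and $\hii_{X_{i+1}}\cong T$ forces $\hii_{X_i}\cong T$; hence the $X_i$ with a rational point over $k$ form a prefix $X_1,\dots,X_p$, $\un{{\mathbf X}(e^q)}=\{X_{p+1},\dots,X_n\}$, and
$$
\pi_{\un{{\mathbf X}(e^q)}_{\emptyset}}\;=\;\otimes\,\whii_{X_{p+1}}\;=\;\otimes\,(\hii_{X_p}\otimes\whii_{X_{p+1}})\;=\;\otimes\,\hii_{X_{p/p+1}},
$$
using $\hii_{X_p}\cong T$ since $X_p$ carries a zero-cycle of degree $1$ (\cite[Sect.~2.3]{IMQ}); that is, the empty projector of the reduced collection is precisely the $p$-th graded piece of the Postnikov system. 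From $X_{2r+1}=Q'^r$, $X_{2r+2}=Q^r$ one reads off $p=2\,i_W(q)$ when $i_W(q')=i_W(q)$ and $p=2\,i_W(q)+1$ when $i_W(q')=i_W(q)+1$, and these are the only two cases since $q\subset q'\subset q\perp\hh$.

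It then remains to read off and match the bidegrees. Writing $N=\ddim(Q)$, $N'=\ddim(Q')=N+1$, the proof of Proposition \ref{eq-invert} gives $\widetilde{M}(A_q)\otimes\hii_{X_{i/i+1}}\cong\hii_{X_{i/i+1}}(i/2)[i-1]$ for $i$ even and $\cong\hii_{X_{i/i+1}}(N'-(i-1)/2)[2N'-i+1]$ for $i$ odd; evaluating at $i=p$ and shifting by $[1]$ gives the bidegree of $\pi_{\un{{\mathbf X}(e^q)}_{\emptyset}}(e^q)$. On the other side, Proposition \ref{phiE-on-ePQ} with $E=k$ gives $\Phi^k(e^{Q'\backslash Q})=T(x)[y]$ with $(x)[y]=\sum_{l'=0}^{i_W(q')-1}(N'-2l')[2N'-4l'+1]-\sum_{l=0}^{i_W(q)-1}(N-2l)[2N-4l+1]$. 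Because $N'=N+1$, each matched pair of summands contributes $(1)[2]$, and an elementary computation (with a single leftover summand when $i_W(q')=i_W(q)+1$) shows both bidegrees equal $(i_W(q))[2\,i_W(q)]$ when $i_W(q')=i_W(q)$ and $(N'-i_W(q))[2N'-2\,i_W(q)+1]$ when $i_W(q')=i_W(q)+1$; as both objects are single Tate-motives this finishes $A=e^q$, hence the Proposition. The point I expect to require genuine care is the identification of $\pi_{\un{{\mathbf X}(e^q)}_{\emptyset}}$ with the single graded piece $\hii_{X_{p/p+1}}$, which relies on the chain being totally ordered for the idempotent pre-order together with $\hii_{X_p}\cong T$; everything else is the bookkeeping above and the elementary arithmetic comparison of the two formulas.
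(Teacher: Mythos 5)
Your proposal is correct and follows the same approach as the paper's one-line proof, which simply asserts that after reducing to $A=e^{q}$, both $\Phi^{k}$ and $\pi_{\un{{\mathbf Y}}_{\emptyset}}$ yield the bidegree $\sum_{l'=0}^{i_W(Q')-1}(m'-2l')[2m'-4l'+1]-\sum_{l=0}^{i_W(Q)-1}(m-2l)[2m-4l+1]$. You have merely expanded what the paper leaves implicit: the multiplicativity reduction to $e^{q}$, the independence from the choice of collection and refinement, the identification of $\pi_{\un{{\mathbf Y}}_{\emptyset}}$ with the $p$-th graded piece $\otimes\hii_{X_{p/p+1}}$ of the ordered Postnikov system, and the arithmetic matching of the two closed formulas.
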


\begin{proof}
It is sufficient to check it for $A=e^q$, where we get from Proposition \ref{phiE-on-ePQ} and the proof of Proposition \ref{eq-invert}
that in both cases
$$
(a)[b]=\sum_{l'=0}^{i_W(Q')-1}(m'-2l')[2m'-4l'+1]-\sum_{l=0}^{i_W(Q)-1}(m-2l)[2m-4l+1],
$$
where $q'=\la 1\ra\perp -q$, $m'=\ddim(Q')$ and $m=\ddim(Q)$.
\Qed
\end{proof}

Thus, to recover $\Phi^k(A)$, we don't need the whole collection ${\mathbf Y}$, but only $\pi_{\un{\mathbf Y}_{\emptyset}}$.
It is enough to take $Q=\sqcup_lQ_l$ the disjoint union of sufficiently many anisotropic quadrics, so that
$\whii_Q\otimes A\cong T_{\widetilde{Q}}(a)[b]$. Then $\Phi^k(A)\cong T(a)[b]$ as well.
Similarly, one can define $\Phi^E(A)$.

Let us introduce an ordering on \v{C}ech simplicial schemes:
$\hii_R\geq\hii_S$ iff the projection $\hii_R\row T$ factors through a map $\hii_R\row\hii_S$ in $\dmkD$.
This is equivalent to: the map $\hii_R\otimes\hii_S\row\hii_R$ is an isomorphism, or, which is the same,
$\whii_S\cong\whii_R\otimes\whii_S$. Since the automorphism group of $\whii_R$
is trivial, we can choose these identifications simultaneously for all inequalities in an associative way.
Then $\hii_R\cong\hii_S$ iff there are inequalities in both directions. Let $E$ be a finitely generated field extension, and $P$ be a
smooth projective variety with $k(P)=E$.
Let ${\mathbf{Q}}$ be the disjoint union of all connected varieties $Q$ with $\hii_Q\gneqq\hii_P$ (so, it is a smooth variety, but
with infinitely many components). Let $\hii_{{\mathbf{Q}}}$ be the motive of the respective \v{C}ech simplicial scheme, and
$\whii_{{\mathbf{Q}}}$ be the complementary projector.
Define the "motivic category of an extension":
\begin{equation}
\dmELD{E}{k}=\hii_P\otimes\whii_{{\mathbf{Q}}}\otimes\dmkD.
\end{equation}
Note, that the category $\dmELD{E}{k}$ is non-zero, and by the arguments from the proof of Proposition
\ref{z+z}, $\op{Pic}$ of it contains the subgroup of Tate-motives isomorphic to $\zz\oplus\zz$.
Let $\ffi_E:\dmkD\row\dmELD{E}{k}$ be the natural projection functor.
For $A\in\picq$, we know that $\ffi_E(A)\cong T_E(a)[b]$, for some (unique) $a,b\in\zz$. Then
$\Phi^E(A)\cong T(a)[b]$ as well. Indeed, we can reduce to the case of a trivial field extension using the natural functor
$$
\dmELD{E}{k}\row\dmELD{E}{E}.
$$
Now, all the calculations we did in Section \ref{structure-picq}, can be performed with the help of Corollary \ref{dist-TJ}
instead of \cite[Thm 31]{BQ}. Actually, the whole functor $\Phi^E$ of Bachmann can be alternatively introduced along these lines.
\\

\noindent
{\small {\bf Acknowledgements:} \ \
I'm grateful to Tom Bachmann for very useful discussions and to the Referee for very helpful suggestions which permitted to
improve the text.}

\end{document}